\documentclass[a4paper, 10pt]{article}
\usepackage[colorlinks=true]{hyperref}
\hypersetup{urlcolor=blue, citecolor=red}
\usepackage[T1]{fontenc}
\usepackage[utf8]{inputenc}
\usepackage{slashed}
\usepackage[normalem]{ulem}
\usepackage{cancel}
\usepackage{upgreek}
\usepackage{latexsym}
\usepackage{color}
\usepackage{amssymb,amsmath,amsfonts}
\newcommand\beq[1]{\begin{equation}\label{#1} }
\newcommand{\eeq}{\end{equation} }

\renewcommand{\theequation}{\arabic{section}.\arabic{equation}}

\newtheorem{theorem}{Theorem}
\newtheorem{definition}{Definition}[section]
\newtheorem{proposition}{Proposition}[section]
\newtheorem{lemma}{Lemma}[section]

\newtheorem{sublemma}{Sublemma}[section]
\newtheorem{remark}{Remark}[section]
\newtheorem{notationalremark}{Notational Remark}[section]

\newtheorem{corollary}[theorem]{Corollary}
\newtheorem{assumption}{Assumption}[section]
\newtheorem{claim}{Claim}[section]

\newtheorem{tools}{$\negsp\negsp$}[subsection]

\newcommand\thm[1]{\begin{theorem}\label{#1}}
\newcommand\thmtwo[2]{\begin{theorem}[#1]\label{#2}}
\newcommand\ethm{\end{theorem} }
\newcommand\dfn[1]{\begin{definition}\label{#1} \rm}
\newcommand\dfntwo[2]{\begin{definition}[#1]\label{#2} \rm}
\newcommand\edfn{\end{definition} }
\newcommand\pro[1]{\begin{proposition}\label{#1}}
\newcommand\protwo[2]{\begin{proposition}[#1]\label{#2}}
\newcommand\epro{\end{proposition} }
\newcommand\lem[1]{\begin{lemma}\label{#1}}
\newcommand\lemtwo[2]{\begin{lemma}[#1]\label{#2}}
\newcommand\elem{\end{lemma} }
\newcommand\sublem[1]{\begin{sublemma}\label{#1}}
\newcommand\sublemtwo[2]{\begin{sublemma}[#1]\label{#2}}
\newcommand\esublem{\end{sublemma} }
\newcommand\rem[1]{\begin{remark}\label{#1} \rm}
\newcommand\erem{\end{remark} }
\newcommand\notrem[1]{\begin{notationalremark}\label{#1} \rm}
\newcommand\enotrem{\end{notationalremark} }
\newcommand\cor[1]{\begin{corollary}\label{#1}}
\newcommand\cortwo[2]{\begin{corollary}[#1]\label{#2}}
\newcommand\ecor{\end{corollary} }
\newcommand\asmp[1]{\begin{assumption}\label{#1}}
\newcommand\asmptwo[2]{\begin{assumption}[#1]\label{#2}}
\newcommand\easmp{\end{assumption} }
\newcommand\clm[1]{\begin{claim}\label{#1}}
\newcommand\eclm{\end{claim} }
\newcommand{\proof}{\par\medskip\noindent{\bf Proof\ }}

\expandafter\chardef\csname pre amssym.def
at\endcsname=\the\catcode`\@
\catcode`\@=11
\def\undefine#1{\let#1\undefined}
\def\newsymbol#1#2#3#4#5{\let\next@\relax
 \ifnum#2=\@ne\let\next@\msafam@\else
 \ifnum#2=\tw@\let\next@\msbfam@\fi\fi
 \mathchardef#1="#3\next@#4#5}
\def\mathhexbox@#1#2#3{\relax
 \ifmmode\mathpalette{}{\m@th\mathchar"#1#2#3}%
 \else\leavevmode\hbox{$\m@th\mathchar"#1#2#3$}\fi}
\def\hexnumber@#1{\ifcase#1 0\or 1\or 2\or 3\or 4\or 5\or 6\or 7\or
8\or
 9\or A\or B\or C\or D\or E\or F\fi}
\ifcase\@ptsize
 \font\tenmsb=msbm10
 \font\sevenmsb=msbm7
 \font\fivemsb=msbm5
\or
 \font\tenmsb=msbm10 scaled \magstephalf
 \font\sevenmsb=msbm7 scaled \magstephalf
 \font\fivemsb=msbm5  scaled \magstephalf
\or
 \font\tenmsb=msbm10 scaled \magstep1
 \font\sevenmsb=msbm7 scaled \magstep1
 \font\fivemsb=msbm5 scaled \magstep1
\fi
\newfam\msbfam
\textfont\msbfam=\tenmsb
\scriptfont\msbfam=\sevenmsb
\scriptscriptfont\msbfam=\fivemsb
\edef\msbfam@{\hexnumber@\msbfam}
\def\Bbb#1{\fam\msbfam\relax#1}
\def\widehat#1{\setboxz@h{$\m@th#1$}%
 \ifdim\wdz@>\tw@ em\mathaccent"0\msbfam@5B{#1}%
 \else\mathaccent"0362{#1}\fi}
\def\widetilde#1{\setboxz@h{$\m@th#1$}%
 \ifdim\wdz@>\tw@ em\mathaccent"0\msbfam@5D{#1}%
 \else\mathaccent"0365{#1}\fi}

\def\RIfM@{\relax\ifmmode}
\def\nonmatherr@#1{\errmessage{\string#1\space allowed only in math mode}}
\def\Bbb{\RIfM@\expandafter\Bbb@\else
 \expandafter\nonmatherr@\expandafter\Bbb\fi}
\def\Bbb@#1{{\Bbb@@{#1}}}
\def\Bbb@@#1{\fam\msbfam\relax#1}
\def\setboxz@h{\setbox\z@\hbox}
\def\wdz@{\wd\z@}
\catcode`\@=\csname pre amssym.def at\endcsname
\newcommand{\negsp}{\hspace{-.09truecm}}  

\definecolor{yellow}{rgb}{0.99, 0.93, 0.0}

\begin{document}
\title{\bf No infinite spin for total collisions in the spatial N-body problem}
\date{\today}

\author{Gabriella Pinzari\footnote{Universit\`a degli Studi di Padova,
Dipartimento di Matematica Tullio Levi-Civita, Via Trieste 63 - 35121 Padova, Italy. E-mail: pinzari@math.unipd.it
}\ \  and Piotr Zgliczynski\footnote{Jagiellonian University, ul. $\slashed{L}$ojasiewicza 6, 30-348 Krak\'ov, Poland. E-mail: umzglicz@cyf-kr.edu.pl}}
\maketitle

\begin{abstract}\footnotesize{
}
In the $n$-body problem, when bodies tend to a total collision, then its normalized shape curve converges to the set of normalized central configurations, which has $SO(3)$
symmetry in the planar case. This leaves a possibility that the normalized  shape curve  tends to the set obtained by rotations of some central configuration instead of a particular point on it. This is the \emph{infinite spin problem} which concerns the rotational behavior of total collision orbits in the $n$-body problem. 
We show that the infinite spin is not possible if the limiting shape  is isolated from other connected components of the set of normalized central configurations. Our approach extends the method from recent work for total collision for the planar case by Moeckel and Montgomery.  The main tool is a full reduction $\rm SO(3)$--symmetry in a context of vanishing angular momentum.
\end{abstract}

\maketitle


\renewcommand{\theequation}{\arabic{equation}}

 \section{Introduction}
The spatial $(1+n)$--body problem is the $(6n+6)$--dimensional\footnote{ Here, by {\it dimension} of an ODE we mean the number of its equations, after it is reduced to a  first order ODE system. }  dynamical system
where $q=(q_1, \ldots, q_{n+1})\in{\mathbb R}^{3}\times \cdots\times {\mathbb R}^{3}\setminus \Delta$
solves
\begin{eqnarray}\label{motion equations}
\ddot q_i=\sum_{j=1\atop j\ne i}^{n+1}m_j\frac{q_j-q_i}{|q_i-q_j|^3}=\frac{1}{m_i} \nabla_{q_i} V(q) \qquad i=1\,,\ldots, n+1
\end{eqnarray}
where $|\cdot|$ denotes the Euclidean norm, the double dot  is the second derivative with respect to time,  $V(q)$ is the potential function 
\begin{eqnarray*} 
  V(q)=\sum_{j=1\atop j\ne i}^{n+1} \frac{m_i m_j}{r_{ij}}, \qquad r_{ij}=|q_i - q_j|
\end{eqnarray*}
 and 
\begin{eqnarray}\label{Delta}
\Delta:=\bigcup_{j=1\atop j\ne i}^{n+1}\Big\{q:\ q_i=q_j\Big\}
\end{eqnarray}
is the singular set.
By {\it total collision solution} we mean a solution  such that 
\begin{eqnarray}\label{total collision}T:=\inf\Big\{T_0>0:\ \lim_{t\to T_0^-}q_i(t)= B(T_0)\qquad \forall\ i=1\,,\ldots\,,n+1\Big\}<+\infty
\end{eqnarray}
where  $B(t)=B(0)+v_B t$ denotes  the uniform linear  motion of the center of mass $B$ $=$ $M^{-1}$ $\sum_{i=1}^{n+1}$ ${m_i q_i}$, with $M$ $=$  $\sum_{i=1}^{n+1}m_i$ the total mass. 
If this happens, then, clearly, condition~\eqref{Delta} is violated by all couples $(i, j)$ at time $T$.
For the spatial problem~\eqref{motion equations} there is an open question relating  total collisions to central configurations.    
We recall that a central configuration,  $\rm CC$,  is a value $q_{0}\in {\mathbb R}^{3(n+1)}$ such  that any particle has an acceleration pointing towards the center of mass, namely, solving 
$$\nabla_{q_i} V(q)=-\lambda  (q_i-B)\,,\qquad i=1\,,\ldots, n+1$$
with some $\lambda\in \mathbb R$.  
For any 
$q=(q_1, \ldots, q_{n+1})\in {\mathbb R}^{3}\times \cdots\times {\mathbb R}^3$, we define the \emph{normalized configuration}  $\widehat{q}=(\widehat q_1, \ldots, \widehat q_{n+1})$ by
\begin{equation} \label{normalized}
  \widehat{q}_i=\frac{q_i - B}{|q-BI_{n+1}|_m}, \quad i=1,\dots,n+1
\end{equation}
 where $I_{m}$ is the $m\times m$ unit matrix. 
Then $\widehat q=(\widehat q_1, \ldots, \widehat q_{n+1})\in {\mathcal S}$, with ${\mathcal S}$ is the unit sphere in $\mathbb R^{3(n+1)}$, relatively to the norm
$$|q|_m:=\sqrt{\sum_{i=1}^{n+1}m_i |q_i^2|}\,.$$
A result by Chazy states that, if $q(t)$ is  a total collision solution, then the $\upomega$--limit of $\widehat q(t)$ (namely, the set of $q_0$ such that there is a sequence of times $\{t_k\}\nearrow T$  such that $q(t_k)\to q_0$, as $k\to +\infty$) is a subset of the (normalized)   $\rm CC$.
By the $\rm SO(3)$--invariance of~\eqref{motion equations}, if $q_{0}\in {\rm CC}$, then  ${\mathcal R}q_{0}\in {\rm CC}$, for any ${\mathcal R}\in \rm SO(3)$, so we switch to the quotient spaces  ${\mathcal S}/\rm SO(3)$, ${\rm CC}/\rm SO(3)$, 
denoting as  $[\cdot]$ their respective elements.
We assume that the  $\upomega$--limit of $[\widehat q(t)]\in {\mathcal S}/\rm SO(3)$
contains  an isolated point $[q_{0}]\in {\rm CC}/\rm SO(3)$, with respect to the topology of ${\rm CC}/\rm SO(3)$. As ${\rm CC}/\rm SO(3)$ is compact, then such limit set is connected (see~\cite{hirschMD2013}), implying  that it actually has to coincide with  $[q_{0}]$. In other words, there is convergence for $[\widehat q(t)]$:
$$\lim_{t\to T^-} [\widehat q(t)]=[q_0]\,.$$
The natural question is: does  the 
corresponding 
normalized trajectory $\widehat q(t)$ defined through~\eqref{normalized} also converge (to a $\rm CC$), or is its $\upomega$-- limit set  some non--zero--measure subset of the set $${\mathcal G}(q_0):=\Big\{q_\star\in {\rm CC}:\ [q_\star]=[q_0]\Big\}=\Big\{q_\star={\mathcal R}q_0\,,\ {\mathcal R}\in \rm SO(3)\Big\}$$ like for example, a circle or a sphere?
The question has been posed by Wintner~\cite{wintner41}, also noted in~\cite{albouyCS2012}, and
solved in~\cite{moeckelM2025} for the planar problem, where convergence has been proved. Still for the planar case, the result has been then extended in~\cite{gierzkiewiczSZ2025,wangyou2026} so as to include the case of partial collisions, i.e., solutions such that~\eqref{total collision} holds only for a cluster of indices $i$.   Here we extend the result to the spatial problem. We however need an additional  condition.
A  total collision solution $q(t)$ of~\eqref{motion equations} will be said  {\it non--collinear} if there exist  $1\le i<j\le n$ such that the  Jacobi coordinates (see the next section) 
  verify   
\begin{eqnarray}\label{non collinearity}
\sup_{0\le t<T}
\frac{|x_i(t)\cdot x_j(t)|}{|x_i(t)\times x_j(t)|}\,,\qquad \sup_{0\le t<T}\frac{|x_j(t)|^2}{|x_i(t)\times x_j(t)|}<+\infty\end{eqnarray}
In particular, the particles are not allowed to go to total collision in a collinear way.
We prove the following result, which will be stated with more precision in the course of the paper (see Theorem~\ref{main} below).
  
\vskip.1in
\noindent
{\bf Theorem A} {\it Suppose $q(t)$
is a non--collinear, total collision solution of the spatial $(n+1)$--body problem. Suppose  $[\widehat q(t)]$ converges to an isolated $[q_0]\in {\rm CC}/\rm SO(3)$. If $$(\varphi, \theta, \psi)\in {\mathbb T}\times (0, \pi)\times {\mathbb T}\,,$$ 
where ${\mathbb T}:={\mathbb R}/(2\pi\mathbb Z)$, are three Euler angles  reducing the $\rm SO(3)$--symmetry of the translationally reduced system coming from~\eqref{motion equations}, then $(\varphi(t), \theta(t), \psi(t))$ converge as $t\to T^-$ and so $\widehat q(t)$ converges to a particular $q_\star\in {\mathcal G}(q_0)\subset{\rm CC}$.}

\vskip.1in
\noindent
To highlight our contribution, we recall in a very sketchy way the strategy followed in~\cite{moeckelM2025}. 
It is based on various ingredients: reduction of translations, of rotations, McGehee blow--up, analysis of the flow on the center manifold and its stable fibers. The two former tools are finalized to reduce the dimension of the  planar version of the  system~\eqref{motion equations} 
from  $4n+4$ to $4n$ and next to $4n-1$. 
The procedure  allows  also to switch to new coordinates, which here we name $(\upxi, \theta)$, where 
$\theta\in \mathbb T$ is an angle which reduces $\rm SO(2)$--invariance, while $\upxi\in \mathbb R^{4n-2}$ are residual coordinates in $\mathbb R^{4n}/{\rm SO(2)}$. The new coordinates solve an ODE of the form 
 \begin{eqnarray}\label{ODE}
\upxi'=g(\upxi)\,,\qquad 
\theta'=f(\upxi, \upxi') 
\end{eqnarray}
where the prime denotes the derivative with respect to a new time $\tau$, with $t\to T^-$ corresponding to $\tau\to \infty$. 
  The reduction is visible in the new equations~\eqref{ODE}, because the motion of the $\upxi$'s does not depend on the one of $\theta$, while the evolution of the latter coordinate is found integrating the latter equation, once the former is solved. Important technical facts in the resulting equations~\eqref{ODE} are: 
(i) $f$ is linear with respect to $\upxi'$, and
 (ii) isolated equilibria for the vector--field $g$ correspond to isolated classes of limit central configurations for 
$[\widehat q(t)]$, in the old coordinates.
Then the analysis reduces to studying convergence of the $\upxi$--coordinates to an isolated equilibrium through their  center/stable manifolds.  The case when the center manifold is absent (hence, the equilibrium is hyperbolic) is by far easier and has been previously treated in~\cite{chazy1918}. Indeed, in such case $\upxi$ converges exponentially fast to the equilibrium, implying that $|\upxi'(\tau)|$ has a finite integral over $[0, +\infty)$. As  $f$ in~\eqref{ODE} is linear with respect to $\upxi'$, no--infinite spin follows from a bound like

\begin{eqnarray*}
\lim_{\tau\to +\infty}|\theta(\tau)|
&\le&|\theta(0)|+\lim_{\tau\to +\infty}|\theta(\tau)-\theta(0)|
\le|\theta(0)|+\int_0^{+\infty}|\theta'(\tau)| d\tau\nonumber\\
&\le& |\theta(0)|+C\int_{0}^{+\infty}|\upxi'(\tau)|<+\infty\,.
\end{eqnarray*}
The paper~\cite{moeckelM2025} deals with the analysis of the case when the center manifold is present, where the authors still prove convergence for the right hand side integral.  \\
   Here we basically follow the same scheme. We begin with the reduction of 
translation invariance due to Jacobi.   This  is a standard step, which transforms the ODE~\eqref{motion equations} to a similar one for a fictitious $n$--particle system, hence lowers the dimension to $6n$.
From now on, we refer only to the Jacobi translationally-reduced system, which, as well as the original one, 
is still $\rm SO(3)$--invariant.
Our main contribution relies in the use of a reduction of this symmetry,
    in a framework where the  total angular momentum  (of the translationally reduced system) -- $\rm C$ in what follows -- vanishes. 
As (contrarily to the ones in the plane) rotations in the space do not 
commute,  reducing the $\rm SO(3)$--symmetry is a non--trivial task going back to Jacobi~\cite{jacobi1842}, who, in the case $n=2$ (three--body problem) and under condition $\rm C\ne 0$, obtained a reduction of the  dimension of the system~\eqref{motion equations}, known as {\it nodes reduction},  by producing new coordinates  obeying to an ODE again of the form~\eqref{ODE}, but with $\upxi\in {\mathbb R}^{8}$.  Note that this corresponds to a reduction of the dimension by {\it four} units, as the translationally reduced system has dimension $6n=12$. 
We remark  that condition $\rm C\ne 0$ in Jacobi's work is not
a merely technical fact. Rather, it is an assumption without which the procedure looses its meaning, because  
it uses, as a main tool,  a  reference frame ({\it invariable frame})
where the third axis is parallel to the direction of  $\rm C$. Furthermore,  the angle $\theta$,  main actor of the dimensional reduction,  returns the anomalies of the  mutually opposed, straight lines ({\it nodes}) obtained by intersecting 
the orbital planes\footnote{ By orbital plane of a particle in $\mathbb R^3$ we mean the instantaneous plane determined by  its Cartesian coordinates (with respect to a prefixed reference frame) and the ones of its velocity.} of the two fictitious particles with
the plane orthogonal to $\rm C$. 
Later,  Radau~\cite{radau1868}, interpreted Jacobi's reduction in a Hamiltonian--like language,  succeeding to write the
former ODE in~\eqref{ODE} as the Hamilton equations of a suitably related  Hamiltonian.
For a considerable period, Jacobi-Radau reduction seemed to be destined to remain an isolated, fortuitous occurrence. The opposition of  the nodes was believed, in the community,  to have no chance to be replicated when $n\ge 3$. The lack of such an useful tool for the general case was sadly mentioned by V. I. Arnold in~\cite{arnold63b} as an obstruction in order to bypass the problem of KAM degeneracy
in the framework of the proof of stability of planetary motions.
It was only one century and one half later, in 1982, that a young Ph.D student, F. Boigey~\cite{boigey82}, found a generalization to the case of $n=3$. In this case the phase space has dimension $6n=18$.
In an entirely Hamiltonian framework, Boigey produced a $18$--dimensional  set of canonical coordinates $(p, q)\in {\mathbb R}^9\times{\mathbb R}^9$, where three of them, say $p_{8}$, $p_9$, $q_9$, were obtained  as functions of the three components of the angular momentum ${\rm C}$. This allowed her to disregard the constant motion of such coordinates and write, for the remaining ones, equations again of the form~\eqref{ODE}, for  $\upxi=(p_1, \ldots, p_7, q_1, \ldots, q_7)\in {\mathbb R}^{14}$ and $\theta=q_8$, parametrically depending on $p_8=|\rm C|$. Note that 
the couple of coordinates $(p_9, q_9)$ does not appear in the Hamiltonian, being both first integrals, hence, both cyclic.
In the Hamiltonian language, the dimension reduction corresponds to the production of three cyclic coordinates, namely, $q_8=\theta$ and
the aforementioned couple $(p_9, q_9)$.
It is precisely this couple of canonical coordinates which marks the
advancement of knowledge with respect to Radau--Jacobi's points of view, because (in a suitable sense) 
it  gives the (negligible) direction of $\rm C$. Also Boigey's reduction works under condition  $\rm C\ne 0$ (plus some others), by using a chain of reference frames, whose  third axes are parallel to partial sums of angular momenta, including $\rm C$.
The complete generalization to any $n$ arrived one year later, by A. Deprit~\cite{deprit83}. Generalizing (and acknowledging) Boigey, Deprit constructed a set of $6n$--dimensional canonical coordinates $(p, q)\in {\mathbb R}^{3n}\times{\mathbb R}^{3n}$, where
 $p_{3n-1}$, $q_{3n-1}$,  $p_{3n}$  and $q_{3n}$ play the r\^ole of Boigey's  $p_{8}$, $q_{8}$, $p_{9}$  and $q_{9}$. Maybe due to the 
complicated geometric features underlying Deprit's reduction, which led also the author himself to express perplexities about the concrete benefit of his discovery, Deprit's reduction was overlooked for about 30 years. 
In~\cite{chierchiaPi11b} it has been used to complete the aforementioned Arnold's program, even though the proof of KAM stability of planetary motions had already appeared in
~\cite{fejoz04}, thanks to a tricky argument to circumvent the mentioned degeneracy. 
Recently, Deprit's reduction has been used  in~\cite{clarkeFG2024} to prove Arnold instability of semi--major axes in a four--body problem.
Other reductions appeared in~\cite{pinzari18} (see~\cite{pinzari2024} for a review), aimed to bypass the singularity of Deprit's reduction when some orbital plane tend to coincide, an aspect of interest in planetary systems.
It is worth  remarking that the dimensional reduction by four units shared by all the aforementioned procedures  is optimal and is intimately related to the non--commutativity of rotations in $\mathbb R^3$. 
Indeed, a classical  result  by Liouville~\cite{liouville1853} states that  if a system has $m$ commuting and independent first integrals, then,  under suitable conditions, the dimension of phase space can be lowered by $2m$. As the largest number of commuting  first integrals that one can form with the three components of $\rm C$ is {\it two}, we then have that {\it four} is the maximal dimensional reduction that one can reach.
\\
   Notwithstanding the variety of the above mentioned results, none of them applies  to the problem considered in the paper, because, as anticipated, a total collision solution can happen only if the total angular momentum vanishes  (see~\cite{sundman13,wintner41}), a case when all of them crash. At our knowledge, canonical $\rm SO(3)$ reductions with $\rm C=0$ have not been explored in the literature.    We propose a canonical change of coordinates which, generally speaking, does not completely reduce  the $\rm SO(3)$ symmetry of the system, because just {\it two} units  go down. However, when such change is applied to systems with vanishing angular momentum, the  dimension  reduction is higher. More precisely, we again obtain equations of the form~\eqref{ODE}, but with  $\upxi\in {\mathbb R}^{6n-6}$ as coordinates in ${\mathbb R}^{6n}/\rm SO(3)$, while, in the latter, $\theta$ is replaced with the triple of Euler angles $(\varphi, \theta, \psi)$ mentioned in the statement. This picture marks two notable differences with the case $\rm C\ne0$. First of all, the dimension of the reduced space ${\mathbb R}^{6n}/\rm SO(3)$ (namely, of the coordinates $\upxi$) is $6n-6$, compared to the value $6n-4$ that we have when $\rm C\ne0$. 
   This is not a contradiction, because  when $\rm C=0$, the well known commutation rules among its components
  \begin{eqnarray*}\{{\rm C}^i, {\rm C}^j\}=-\delta_{ijk}{\rm C}^k
\end{eqnarray*}
(where  $\{\cdot\,,\cdot\}$ are the Poisson brackets, 
$(ijk)$ is a permutation of $(123)$ and $\delta\in\{\pm 1\}$ is its parity)
 have zero at the right hand sides, which, roughly, suggests one should be able to take $m=3$ in Liouville theorem.
   Secondly, 
when $\rm C=0$, 
all the angles in Euler triple move. This is opposite to what happens when $\rm C\ne 0$, because in such case the conservation of $\rm C$ imposes that the angles $\varphi$ and $\theta$ -- which give the direction of the plane orthogonal to $\rm C$ --
stay at rest.  We prove that, when $\rm C=0$, all the three moving Euler angles
have a finite limit as $t\to T^-$.
\section{Reduction of translations} \label{sec:red-trans}
We proceed in the Hamiltonian framework. The Hamiltonian of the $(1+n)$--body problem in $\mathbb R^3$ is
 \begin{eqnarray}\label{H}
H_{n+1}=\sum_{i=1}^{n+1}\frac{|p_i|^2}{2m_i}-\sum_{1\le i<j\le n+1}\frac{m_i m_j}{|q_i-q_j|}
\end{eqnarray}
where $p=(p_1, \ldots, p_{n+1})\in{\mathbb R}^3\times \cdots\times {\mathbb R}^3$, $q=(q_1, \ldots, q_{n+1})\in{\mathbb R}^3\times \cdots\times {\mathbb R}^3\setminus\Delta$, with $\Delta$ as in~\eqref{Delta}.
 As well known, there exists a canonical,
 linear  change of coordinates
\begin{eqnarray}\label{reduction of translations}(P, y_1, \ldots, y_n, B, x_1, \ldots, x_n)\to (p, q)
\end{eqnarray}
which splits
  $H_{n+1}(p, q)$ as
\begin{eqnarray*}
H_{n+1}(P, y, B, x)=H_{B}(P)+ H(y,  x)
\end{eqnarray*}
 with $y=(y_1, \ldots, y_{n})$, $ x=(x_1, \ldots, x_{n})$, with
 $H_{B}(P)$ an unessential term,
 while 
\begin{eqnarray}\label{HNEW} H( y, x)=\sum_{1\le i\le n}\frac{|y_i|^2}{2\mu_i}-\sum_{0\le i<j\le n}\frac{m_{i+1}m_{j+1}}{\left|-\frac{M_{i}}{M_{i+1}}x_{i}+
 \sum_{k=i+1}^{j-1}\frac{m_{k+1}}{M_{k+1}}x_k+
 x_{j}
 \right|} \end{eqnarray}
 with $\mu_i$ suitable new mass parameters depending on $m_1$, $\ldots$, $m_{n+1}$, given in~\eqref{mui} below.
%
%
  Such transformation is   often referred to as {\it Jacobi reduction of translations} and is the unique linear,   exact  and symplectic transformation defined so that, if $M:=\sum_{i=1}^{n+1}m_i$ is the total mass,  $B:=M^{-1}\sum_{i=1}^{n+1}{m_i q_i}$ is the center of mass of the whole system; $x_i$ is the relative distance of the particle with mass $m_{i+1}$  to the center of mass of $m_{1}$, $\ldots$, $m_{i}$;  $P$, $y_i$ are the  impulses conjugated to $B$, $x_i$, respectively.  
More details  are reported in Appendix~\ref{Barycentric reduction (generalized Jacobi coordinates)}.

\section{Reduction of rotations} \label{sec:rot-red}

The Hamiltonian~\eqref{HNEW} is still $\rm SO(3)$--invariant, as well as the original one~\eqref{H}. This follows from the fact that the angular momentum retains its form after switching to Jacobi coordinates. Namely, the following relation holds (see Appendix~\ref{Barycentric reduction (generalized Jacobi coordinates)})
\begin{eqnarray}\label{eq: ang mom}
\sum_{i=1}^{n+1} q_i\times p_i=B\times P+\sum_{i=1}^n x_i\times y_i
\end{eqnarray}
where ``$\times$'' denotes skew--product. In this section, we face the problem of ruling this symmetry out.

\vskip.1in
\noindent
In what follows, the dot ``$\cdot$'' will denote the Euclidean inner product of two vectors in $\mathbb R^3$, $(\cdot\,,\cdot)$ the corresponding operation in the coordinate space.  Moreover, we denote as  ${\mathbb R}_+:=\{x\in {\mathbb R}:\ x\ge 0\}$; ${\mathbb R}_-:=\{x\in {\mathbb R}:\ x\le 0\}$ and, finally, $\mathbb T:={\mathbb R}/(2\pi\mathbb Z)$. 

\vskip.1in
\noindent
We consider
$n$   pairs of triples of canonical variables $(y_j, x_j)\in \mathbb R^3\times \mathbb R^3$, with $j=1$, $\ldots$, $n$, $x_j=(x_{j, 1}, x_{j, 2}, x_{j, 3})$, $y_j=(y_{j, 1}, y_{j, 2}, y_{j, 3})$.
We regard $x_1$, $\ldots$, $x_n$ as if they were the Cartesian coordinates of $n$ particles  located at the points $Q_1$, $\ldots$, $Q_n\in\mathbb R^3$,
relatively to  a fixed reference frame $Oe_1e_2e_3$ in $\mathbb R^3$, and $y_1$, $\cdots$, $y_n$ the coordinates of the respective linear momenta $P_1$, $\ldots$, $P_n\in\mathbb R^3$.  From now on, we  identify
$(Q_1$, $\ldots$, $Q_n)$ and $(P_1$, $\ldots$, $P_n)$
with their coordinate triples $(x_1$, $\ldots$, $x_n)$, $(y_1$, $\ldots$, $y_n)$ relatively to $Oe_1e_2e_3$.
We consider a new, moving, reference frame, $O{f}_1{f}_2{f}_3$ , where 
\begin{eqnarray}\label{eq:f1f2f3}
{f}_3=\frac{x_n}{|x_n|}\,,\qquad {f}_1=\frac{x_{n}\times x_{n-1}}{|x_{n}\times x_{n-1}|}\,,\qquad {f}_2={f}_3\times {f}_1\,.
\end{eqnarray} 
The frame $O{f}_1{f}_2{f}_3$ is well defined provided that  
\begin{eqnarray}\label{existence conditions}
x_n\ne 0\,,\qquad x_{n-1}\not\parallel x_n\,.
\end{eqnarray}
In the application in this paper, 
 conditions will be weakened to~\eqref{non collinearity} (provided that $i$, $j$ are re--labeled as $n-1$, $n$, respectively.).
\\
We denote as $\varphi$, $\theta$ and $\psi$  the precession, nutation and proper rotation angles relatively to the frames $Oe_1e_2e_3$ and $O{f}_1{f}_2{f}_3$, defined as follows. Given $u$, $v$, $w\in \mathbb R^3$, with $w\perp u$, $v$, let $\alpha_w(u, v)$ stand for the angle  which $u$ has to run, positively (counterclockwise with respect to $w$), to overlap its direction  and verse to the ones of $v$. Then
\begin{eqnarray*}
\varphi:=\alpha_{e_3}(e_1, \gamma)\,,\qquad \theta:=\alpha_{\gamma}(e_3, {f}_3)\,,\quad \psi:=\alpha_{{f}_3}(\gamma, {f}_1)
\end{eqnarray*}
with  $\gamma:=\frac{e_3\times {f}_3}{|e_3\times {f}_3|}$.
and
$$\varphi\,,\psi\in {\mathbb T}\,,\qquad \theta \in (0, \pi)\,.$$
The triple $(\varphi, \theta, \psi)$ is well defined provided that
\begin{eqnarray}\label{existence conditionsNEW}
x_n\not\parallel e_3\,.
\end{eqnarray}
We denote as $\upxi_j=(\upxi_{j, 1}, \upxi_{j, 2}, \upxi_{j, 3})$, $j=1$, $\ldots$, $n$, the triples of  cartesian coordinates  of $x_j$ relatively to $O{f}_1{f}_2{f}_3$  
and put $\upxi=(\upxi_1\,,\ldots\,,\upxi_n)$. 
By definition,   $\upxi_{j, k}$ are defined through
\begin{eqnarray}\label{xn xn-1}
x_j=\left\{
\begin{array}{lll}
\displaystyle\upxi_{j, 1}f_1+\upxi_{j, 2}f_2+\upxi_{j, 3}f_3\qquad &j=1\,,\ldots\,,n-2\\\\
\displaystyle\upxi_{n-1, 2}f_2+\upxi_{n-1, 3}f_3& j=n-1\\\\
\upxi_{n, 3}f_3&j=n
\end{array}
\right.
\end{eqnarray}
where  the numbers $\upxi_{n, 3}$ and $\upxi_{n-1, 2}$
satisfy

\begin{eqnarray}
\upxi_{n, 3}\in \mathbb R_+\setminus\{0\}\,,\qquad \upxi_{n-1, 2}\in \mathbb R_-\setminus\{0\}\,.  \label{eq:upxi-ranges}
\end{eqnarray}
The particular decompositions~\eqref{xn xn-1} of $x_{n-1}$, $x_n$ as well as the inequalities~\eqref{eq:upxi-ranges} are immediate consequences of the definitions
\eqref{eq:f1f2f3} and conditions~\eqref{existence conditions}.
We now define the momenta $(\Phi, \Theta, \Psi, \upeta)$, respectively conjugated to $(\varphi, \theta, \psi, \upxi)$.
We denote as 
\begin{eqnarray*}
{\rm C}:=\sum_{j=1}^n x_j\times y_j\,.\end{eqnarray*}
If the coordinates $(y, x)$ are chosen as in Section~\ref{sec:red-trans},
$\rm C$ is the angular momentum of the translationally reduced Hamiltonian~\eqref{HNEW}, already appeared in  the last summand in~\eqref{eq: ang mom}.
Then we let
\begin{eqnarray}\label{ThetaPhi}\Phi:={\rm C}\cdot e_3\,,\qquad \Theta:={\rm C}\cdot \gamma\,,\qquad \Psi:={\rm C}\cdot {f}_3 \end{eqnarray}
Finally, we denote as $\upzeta_j=(\upzeta_{j, 1}, \upzeta_{j, 2}, \upzeta_{j, 3})$, $j=1$, $\ldots$, $n$, 
the coordinates of $y_j$ relatively to $Of_1f_2f_3$.
Beware that the $\upzeta_{j}$'s do not have  the same structure as the $\upxi_{j}$'s  in~\eqref{xn xn-1}, but, more generally,
\begin{eqnarray}\label{upzetas}y_j=\upzeta_{j, 1}f_1+\upzeta_{j, 2}f_2+\upzeta_{j, 3}f_3\qquad j=1\,,\ldots\,,n\,.\end{eqnarray}
However, we let $\upeta=(\upeta_1\,,\ldots\,,\upeta_n)$, with
\begin{eqnarray}\label{upetas}\upeta_j:=\left\{
\begin{array}{lll}
\displaystyle\upzeta_{j, 1}f_1+\upzeta_{j, 2}f_2+\upzeta_{j, 3}f_3\qquad &j=1\,,\ldots\,,n-2\\\\
\displaystyle\upzeta_{n-1, 2}f_2+\upzeta_{n-1, 3}f_3 & j=n-1\\\\
\upzeta_{n, 3}f_3&j=n
\end{array}
\right.
\end{eqnarray}
Then put
\begin{eqnarray}\label{p}
\left\{
\begin{array}{lll}
P:=\left(\Phi\,,\Theta, \Psi,\upeta\right)\in
{\mathbb R}\times{\mathbb R}\times{\mathbb R}\times({\mathbb R}^3)^{n-2}\times{\mathbb R}^2\times{\mathbb R}\\\\
Q=\big(\varphi, \theta, \psi, \upxi\big)\in
{\mathbb T}\times(0, \pi)\times{\mathbb T}\times({\mathbb R}^3)^{n-2}\times{\mathbb R}\times({\mathbb R}_-\setminus\{0\}) \times({\mathbb R}_+\setminus\{0\})\,.
\end{array}
\right.
\end{eqnarray}
and take in mind that 
the components $\upzeta_{n, 1}$, $\upzeta_{n, 2}$ and $\upzeta_{n-1, 1}$ 
appearing in~\eqref{upzetas} 
but not in~\eqref{upetas} will be functions 
(explicitly given in Equations~\eqref{etas} below)
of  $(P, Q)=(\Phi$, $\Theta$, $\Psi$, $\upeta$, $\varphi$, $\theta$, $\psi$, $\upxi)$. 
We denote as
 \begin{eqnarray}\label{Phi}
 \phi:\quad (P, Q)\to (y, x)
 \end{eqnarray}
the map which relates the coordinates $(P, Q)$ defined above with $(y, x)$.

\begin{remark}\rm
If the coordinates~\eqref{p} are used in a $\rm SO(3)$ invariant Hamiltonian, like~\eqref{HNEW}, one generally expects a  dimensional reduction  by just  two units, because only the coordinate $\varphi$ is cyclic, and one can look at
the Hamiltonian equations of~\eqref{HNEW} for all the coordinates $(P, Q)$ in~\eqref{p}, but $(\Phi, \varphi)$, regarding $\Phi$ as an external parameter. However, 
as we shall discuss in Section~\ref{sec:red-ham},  when $\rm C=0$, 
one has a dimensional reduction by six units,
 as anticipated in the introduction.
\end{remark}
\begin{proposition}\label{homogeneous-canonical}The map $\phi$ in~\eqref{Phi} is exact symplectic.  
\end{proposition}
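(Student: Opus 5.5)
Exact symplecticity of $\phi$ means equality of Liouville $1$-forms, $\sum_{j=1}^n y_j\cdot dx_j=\Phi\,d\varphi+\Theta\,d\theta+\Psi\,d\psi+\sum_{j}\upeta_j\cdot d\upxi_j$, where the last sum runs only over the independent components in~\eqref{xn xn-1}. We regard $\upeta_j$ and $\upxi_j$ as coordinate vectors with respect to $Of_1f_2f_3$.

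\emph{Step 1: split the $1$-form.} Write $x_j=\mathcal R\,\upxi_j$ and $y_j=\mathcal R\,\upzeta_j$, where $\mathcal R=\mathcal R(\varphi,\theta,\psi)\in{\rm SO}(3)$ is the rotation mapping $e_k$ to $f_k$. Then $dx_j=\mathcal R\,d\upxi_j+d\mathcal R\,\upxi_j$. Since $\mathcal R^{-1}d\mathcal R$ is skew, it acts as $v\mapsto\omega\times v$, with $\omega$ the body angular velocity $1$-form, so $d\mathcal R\,\upxi_j=\mathcal R(\omega\times\upxi_j)$. Using orthogonality of $\mathcal R$,
\begin{eqnarray*}
\sum_j y_j\cdot dx_j=\sum_j\upzeta_j\cdot d\upxi_j+\sum_j\upzeta_j\cdot(\omega\times\upxi_j)=\sum_j\upzeta_j\cdot d\upxi_j+\omega\cdot\sum_j\upxi_j\times\upzeta_j .
\end{eqnarray*}
The last vector is ${\rm C}$ expressed in the moving frame.

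\emph{Step 2: the angular term.} Let $\Omega=\mathcal R\omega$ be the angular-velocity $1$-form in the fixed frame. The classical Euler-angle formula gives $\Omega=d\varphi\, e_3+d\theta\,\gamma+d\psi\, f_3$: precession is about $e_3$, nutation about the node line $\gamma$, and proper rotation about $f_3$. I would verify this by composing the three elementary rotations $R_{e_3}(\varphi)R_{e_1}(\theta)R_{e_3}(\psi)$ and differentiating; this is routine. Then
\begin{eqnarray*}
\omega\cdot\sum_j\upxi_j\times\upzeta_j=\Omega\cdot{\rm C}={\rm C}\cdot e_3\,d\varphi+{\rm C}\cdot\gamma\,d\theta+{\rm C}\cdot f_3\,d\psi=\Phi\,d\varphi+\Theta\,d\theta+\Psi\,d\psi ,
\end{eqnarray*}
by~\eqref{ThetaPhi}.

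\emph{Step 3: the residual term.} By~\eqref{xn xn-1}, $\upxi_{n,1}=\upxi_{n,2}=\upxi_{n-1,1}\equiv0$, so their differentials vanish identically. Hence $\sum_j\upzeta_j\cdot d\upxi_j$ contains only the components $\upzeta_{j,k}$ paired with the free $d\upxi_{j,k}$. These are exactly the components kept in $\upeta$ by~\eqref{upetas}, so the term equals $\sum_j\upeta_j\cdot d\upxi_j$.

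\emph{Step 4: bijectivity.} It remains to check that $\phi$ is a diffeomorphism onto the region defined by~\eqref{existence conditions} and~\eqref{existence conditionsNEW}. This requires that the three discarded momenta $\upzeta_{n,1}$, $\upzeta_{n,2}$, $\upzeta_{n-1,1}$ can be recovered from $(P,Q)$. The three components of ${\rm C}$ in the moving frame are determined by $(\Phi,\Theta,\Psi)$ and the angles, provided $e_3$, $\gamma$, $f_3$ are independent, which holds for $\theta\in(0,\pi)$.

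Write $\upxi_n=(0,0,\upxi_{n,3})$ and $\upxi_{n-1}=(0,\upxi_{n-1,2},\upxi_{n-1,3})$, and expand $\sum_j\upxi_j\times\upzeta_j$. The discarded unknowns enter linearly:
\begin{itemize}
\item the first component contains $-\upxi_{n,3}\upzeta_{n,2}$;
\item the second contains $\upxi_{n,3}\upzeta_{n,1}$ together with a term involving $\upzeta_{n-1,1}$;
\item the third contains $-\upxi_{n-1,2}\upzeta_{n-1,1}$.
\end{itemize}
This triangular system is solvable because $\upxi_{n,3}\ne0$ and $\upxi_{n-1,2}\ne0$. Solving it gives the explicit formulas the paper refers to as~\eqref{etas}, and yields smooth invertibility.

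I expect the main obstacle to be the careful sign and convention bookkeeping in the angular-velocity identity and in this triangular solve. The $1$-form identity itself is then immediate.
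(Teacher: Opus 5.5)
Your proposal is correct and is essentially the paper's argument. The paper applies Lemma~\ref{lem:trivial} once per factor of $\mathcal R_3(\varphi)\mathcal R_1(\theta)\mathcal R_3(\psi)$, which is exactly your decomposition of the angular-velocity $1$-form as $d\varphi\,e_3+d\theta\,\gamma+d\psi\,f_3$, and then drops the identically vanishing components of $\upxi_{n-1},\upxi_n$ as in your Step~3. Your Step~4 on invertibility is not part of the paper's proof of this proposition, but it matches the paper's separate triangular derivation of~\eqref{etas} in Section~\ref{subsec:Proof-of-etas}.
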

To prove Proposition~\ref{homogeneous-canonical}, we represent the map~\eqref{Phi}. 
As $\upxi_j$, $\upzeta_j$ are the coordinates of $x_j$, $y_j$ relatively to $Of_1f_2f_3$, we have that the map $\phi$ in~\eqref{Phi} can be represented as
\begin{eqnarray}\label{phi map}
\phi:\quad	\left\{
	\begin{array}{lll}
		x_j={\mathcal R}(\varphi, \theta, \psi)
		\upxi_j
		\\\\
		y_j={\mathcal R}(\varphi, \theta, \psi)\upzeta_j
	\end{array}
	\right.
	\qquad j=1\,,\ldots\,, n
\end{eqnarray}
with
\begin{eqnarray*}
{\mathcal R}(\varphi, \theta, \psi):={\mathcal R}_3\left(\varphi\right){\mathcal R}_1(\theta){\mathcal R}_3(\psi)
\end{eqnarray*}

where 
\begin{eqnarray*}
{\cal R}_1(\alpha)&=&\left(
\begin{array}{ccc}
1&0&0\\
0&\cos\alpha&-\sin\alpha\\
0&\sin\alpha&\cos\alpha
\end{array}
\right)\,,\quad  {\cal R}_3(\alpha)=\left(
\begin{array}{ccc}
\cos\alpha&-\sin\alpha&0\\
\sin\alpha&\cos\alpha&0\\
0&0&1
\end{array}
\right)\end{eqnarray*}
(see Appendix~\ref{sec:Euler-angles}). Moreover, we let
\begin{eqnarray*}
e_1^*=
\left(
\begin{array}{ccc}
1\\
0\\
0
\end{array}
\right)\,,\quad 
e_3^*=
\left(
\begin{array}{ccc}
0\\
0\\
1
\end{array}
\right)
\end{eqnarray*}

\begin{lemma}[\cite{pinzari18}]\label{lem:trivial} Let  $j\in \{1\,,  3\}$, $y$, $x$, $\upzeta$, $\upxi\in \mathbb R^3$;
$y={\mathcal R}_j(\alpha) \upzeta$, $x={\mathcal R}_j(\alpha) \upxi$.
Then
$$y\cdot dx=\left({\rm C}\,,e_j^*\right)d\alpha+\upzeta\cdot d\upxi$$
where ${\rm C}:=x\times y$.
\end{lemma}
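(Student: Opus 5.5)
We compute $dx$ directly. Since $x={\mathcal R}_j(\alpha)\upxi$, we have
$$dx={\mathcal R}_j'(\alpha)\upxi\, d\alpha+{\mathcal R}_j(\alpha)\, d\upxi .$$
Using $y={\mathcal R}_j(\alpha)\upzeta$ and the orthogonality of ${\mathcal R}_j(\alpha)$, this gives
$$y\cdot dx=\big({\mathcal R}_j(\alpha)\upzeta\,,{\mathcal R}_j'(\alpha)\upxi\big)\,d\alpha+\big({\mathcal R}_j(\alpha)\upzeta\,,{\mathcal R}_j(\alpha)d\upxi\big).$$
The second summand equals $\upzeta\cdot d\upxi$, since rotations preserve the inner product. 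So it only remains to identify the coefficient of $d\alpha$.

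For this we use the standard infinitesimal generator of rotations about the $j$-th axis. Differentiating the explicit matrices ${\mathcal R}_1$ and ${\mathcal R}_3$ gives ${\mathcal R}_j'(\alpha)={\mathcal R}_j(\alpha)\,\Omega_j$, where $\Omega_j$ is the skew matrix with $\Omega_j v=e_j^*\times v$ for all $v\in\mathbb R^3$. Both ${\mathcal R}_j(\alpha)$ and $\Omega_j$ are functions of the same generator, so they commute, and one also has ${\mathcal R}_j'(\alpha)\upxi=\Omega_j{\mathcal R}_j(\alpha)\upxi=e_j^*\times x$.

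The coefficient of $d\alpha$ is therefore $(y\,, e_j^*\times x)$. By the cyclic property of the triple product,
$$(y\,, e_j^*\times x)=(e_j^*\,, x\times y)=({\rm C}\,,e_j^*),$$
which yields the claimed identity $y\cdot dx=({\rm C}\,,e_j^*)\,d\alpha+\upzeta\cdot d\upxi$.

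There is no real obstacle in this argument. The only points needing care are the orientation of the generator, so that $\Omega_j v=+e_j^*\times v$ with counterclockwise ${\mathcal R}_j$ as written, and the order in the triple product, so that the result is $x\times y$ rather than $y\times x$. Both are checked directly: for example, ${\mathcal R}_3'(0)$ maps $e_1^*\mapsto e_2^*=e_3^*\times e_1^*$, and the analogous check works for ${\mathcal R}_1$.
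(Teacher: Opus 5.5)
Your proof is correct and follows essentially the paper's argument: differentiate ${\mathcal R}_j(\alpha)$ through its generator (the paper's $J_j$, your $\Omega_j$), which commutes with ${\mathcal R}_j(\alpha)$; use orthogonality to get $\upzeta\cdot d\upxi$; and identify the $d\alpha$ coefficient via the triple product. The only cosmetic difference is that you apply the generator to $x$ and get $(y\,, e_j^*\times x)=({\rm C}\,,e_j^*)$ directly, whereas the paper computes $(\upxi\times\upzeta)\cdot e_j^*$ and then transports it back using ${\mathcal R}_j(\alpha)e_j^*=e_j^*$.
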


\proof Let us denote $J_j=\frac{d}{d\alpha}\mathcal{R}_j(\alpha)_{|\alpha =0}$. Then
\begin{eqnarray*}
   \frac{d}{d\alpha}\mathcal{R}_j(\alpha)&=& J_j \mathcal{R}_j(\alpha) = \mathcal{R}_j(\alpha) J_j \\
   y \cdot (J_j x)&=& (J_j x) \cdot y =  (x \times y)_j = (x \times y) \cdot e_j^*
\end{eqnarray*}
We have 
\begin{eqnarray*}
   y\cdot dx &=& (\mathcal{R}_j(\alpha)\upzeta) \cdot d(\mathcal{R}_j(\alpha)\upxi)=(\mathcal{R}_j(\alpha)\upzeta) \cdot (\mathcal{R}_j(\alpha)J_j \upxi d\alpha) 
   + (\mathcal{R}_j(\alpha)\upzeta) \cdot (\mathcal{R}_j(\alpha)d \upxi)  \\
 &=&  (\upzeta \cdot (J_j \upxi )) d\alpha +  \upzeta \cdot d \upxi = (\upxi \times \upzeta) \cdot e_j^*  +  \upzeta \cdot d \upxi.
\end{eqnarray*}
Observe that 
\begin{eqnarray*}
  (x \times y) \cdot e_j^* = (({\mathcal R}_j(\alpha) \upxi) \times ({\mathcal R}_j(\alpha) \upzeta))\cdot (\mathcal{R}_j e_j^*) =  ({\mathcal R}_j(\alpha) (\upxi \times \upzeta)) \cdot (\mathcal{R}_j e_j^*)
  = (\upxi \times \upzeta) \cdot e_j^* \,.\qquad \square
\end{eqnarray*}

\proof {\bf of Proposition~\ref{homogeneous-canonical}}
{ Let   $x_j={\mathcal R}_3(\varphi){\mathcal R}_1(\theta){\mathcal R}_3(\psi)\upxi_j$ and $y_j={\mathcal R}_3(\varphi){\mathcal R}_1(\theta){\mathcal R}_3(\psi)\upzeta_j$. Applying Lemma~\ref{lem:trivial} iteratively we obtain
\begin{eqnarray*}
  y_j \cdot dx_j &=& (x_j \times y_j, e_3^*) d\varphi + ({\mathcal R}_1(\theta){\mathcal R}_3(\psi)\upzeta_j) \cdot d ({\mathcal R}_1(\theta){\mathcal R}_3(\psi)\upxi_j) \\
   &=&  (x_j \times y_j, e_3^*) d\varphi   + (({\mathcal R}_1(\theta){\mathcal R}_3(\psi)\upxi_j) \times ({\mathcal R}_1(\theta){\mathcal R}_3(\psi)\upzeta_j), e_1^*) d \theta \\
   &+& ({\mathcal R}_3(\psi)\upzeta_j) \cdot d({\mathcal R}_3(\psi)\upxi_j)  \\
    &=& (x_j \times y_j, e_3^*) d\varphi   + (({\mathcal R}_3(\varphi){\mathcal R}_1(\theta){\mathcal R}_3(\psi)\upxi_j) \times ({\mathcal R}_3(\varphi){\mathcal R}_1(\theta){\mathcal R}_3(\psi)\upzeta_j), {\mathcal R}_3(\varphi)e_1^*) d \theta  \\
     &+& ((\mathcal{R}_3(\psi)\upxi_j)\times (\mathcal{R}_3(\psi)\upzeta_j),e^*_3) + \upzeta_j \cdot d\upxi_j  \\
 &=&   (x_j \times y_j, e_3^*) d\varphi   + (x_j \times y_j, {\mathcal R}_3(\varphi)e_1^*) d \theta  +  (x_j \times y_j, {\mathcal R}_3(\varphi) \mathcal{R}_1(\theta)e_1^*) d \psi + \upzeta_j \cdot d\upxi_j.
\end{eqnarray*}
}
Summing over $j$,  and using~\eqref{xn xn-1}, we have
\begin{eqnarray*}\sum_{j=1}^ny_j\cdot dx_j&=&  \left({\rm C}\,, e^*_3\right)\, d\varphi+\left({\rm C}\,, {\mathcal R}_3\left(\varphi\right)e^*_1\right) d\theta+\left({\rm C}\,, {\mathcal R}_3\left(\varphi\right){\mathcal R}_1(\theta)e^*_3\right) d\psi
+\sum_{j=1}^{n-2}\upzeta_j\cdot d\upxi_j\nonumber\\
&+&\sum_{k\in\{2\,,3\}}\big(\upzeta_{n-1}\,, e_k^*\big) d\upxi_{n-1, k}+
\big(\upzeta_{n}\,,e_3^*) d\upxi_{n, 3}
\ \nonumber\\
&=&\Phi d\varphi+\Theta d\theta+\Psi d\psi +\sum_{j=1}^{n-2}\upzeta_j\cdot d\upxi_j+\sum_{k\in\{2\,,3\}}\upzeta_{n-1, k} d\upxi_{n-1, k}+
\upzeta_{n, 3} d\upxi_{n, 3}\nonumber\\
&=&\Phi d\varphi+\Theta d\theta+\Psi d\psi +\upeta\cdot d\upxi
\end{eqnarray*}
having  recognized that (see formula (\ref{eq:appR3R1e3f3}) in Lemma~\ref{lem:EulerAngles})
\begin{eqnarray*}
&&{\mathcal R}_3\left(\varphi\right)e^*_1=\gamma\,,\qquad {\mathcal R}_3\left(\varphi\right){\mathcal R}_1\left(\theta\right)e^*_3={f}_3\,,\qquad \left(\upzeta_n\,, e^*_3\right)=
\upzeta_{n, 3}
\end{eqnarray*}
and, similarly,
$$\left(\upzeta_{n-1}\,, e^*_2\right)=\upzeta_{n-1, 2}
\,,\qquad \left(\upzeta_{n-1}\,, e^*_3\right)=\upzeta_{n-1, 3}
$$
and having used the definitions of $\Theta$, $\Psi$, $\Phi$ and $\upeta$. $\qquad \square$

\section{The reduced Hamiltonian}
\label{sec:red-ham}

 In this section, we assume conditions~\eqref{existence conditions} and~\eqref{existence conditionsNEW}, which, in terms of the coordinates $(P, Q)$ in~\eqref{p}, are expressed through~\eqref{eq:upxi-ranges} and
  $$ \theta\in (0, \pi)\,.$$
We 
%
%
%
 proceed to write the Hamiltonian~\eqref{HNEW} in terms of the coordinates~\eqref{p}. As first thing, we need the expressions 
of $\zeta_{n-1, 1}$, $\zeta_{n, 1}$ and $\zeta_{n, 2}$ in terms of 
$(P, Q)$. 
In Section~\ref{subsec:Proof-of-etas} we shall check that the these are given by
\begin{eqnarray}\label{etas}\left\{
\begin{array}{lll}
\displaystyle\upzeta_{n-1, 1}=-\frac{1}{\upxi_{n-1, 2}}(\Psi-\Psi^{(n-2)}_3(\upeta, \upxi))\\\\
\displaystyle\upzeta_{n, 1}=\frac{1}{\upxi_{n, 3}}\left(-\Theta\sin\psi+\frac{\Phi-\Psi\cos\theta}{\sin\theta}\cos\psi-\Psi_2^{(n-2)}(\upeta, \upxi)+\frac{\upxi_{n-1, 3}}{\upxi_{n-1, 2}}(\Psi-\Psi^{(n-2)}_3(\upeta, \upxi))\right)\\\\
\displaystyle\upzeta_{n, 2}=\frac{1}{\upxi_{n, 3}}\left(-\Theta\cos\psi-\frac{\Phi-\Psi\cos\theta}{\sin\theta}\sin\psi+\Psi_1^{(n-1)}(\upeta, \upxi)\right)
\end{array}\right.
\end{eqnarray}
with
\begin{eqnarray}
\label{Psis}\Psi^{(p)}_k(\upeta, \upxi):=
\sum_{j=1}^p \big(\upxi_{j} \times \upeta_{j},e_k^*\big)
\end{eqnarray}
%
%
%
%
%
By
plugging the formulae~\eqref{phi map} and~\eqref{etas} into
\eqref{HNEW}, we obtain the expression for the Hamiltonian
\begin{eqnarray}&& H(\Phi, \Theta, \Psi, \upeta, 
\theta, \psi, \upxi)=\sum_{j=1}^{n}\frac{|\upeta_j|^2}{2\mu_j}
	+
	\frac{(\Psi-\Psi_{3}^{(n-2)}(\upeta, \upxi))^2}{2\mu_{n-1}\upxi_{n-1, 2}^2} \label{HNEWNEW}\\
&&+
	\frac{\left(-\Theta\sin\psi+\frac{\Phi-\Psi\cos\theta}{\sin\theta}\cos\psi-\Psi_2^{(n-2)}(\upeta, \upxi)+\frac{\upxi_{n-1, 3}}{\upxi_{n-1, 2}}(\Psi-\Psi^{(n-2)}_3(\upeta, \upxi))\right)^2
	}{2\mu_n\upxi_{n, 3}^2}\nonumber\\
&&+
\frac{\left(
	-\Theta\cos\psi-\frac{\Phi-\Psi\cos\theta}{\sin\theta}\sin\psi+\Psi_1^{(n-1)}(\upeta, \upxi)
	\right)^2
}{2\mu_n\upxi_{n, 3}^2}	\nonumber \\
&& -\sum_{0\le i<j\le n}\frac{m_{i+1}m_{j+1}}{\left|-\frac{M_{i}}{M_{i+1}}\upxi_{i}+
		\sum_{k=i+1}^{j-1}\frac{m_{k+1}}{M_{k+1}}\upxi_k+
		\upxi_{j}
		\right|} .\nonumber \end{eqnarray}
As anticipated, $H$ is $\varphi$--independent. Moreover, when $\rm C$ vanishes, so 
\begin{eqnarray}\label{condition}
\Phi=\Theta=\Psi=0
\end{eqnarray}
then $H$ turns to depend only on $(\upeta, \upxi)$:
\begin{eqnarray}\label{H0OLD} H_0(\upeta, \upxi)&:=&H(0, 0, 0, \upeta, 
\theta, \psi, \upxi)\nonumber\\
&=&\sum_{j=1}^{n}\frac{|\upeta_j|^2}{2\mu_j}
	+
	\frac{(\Psi_{3}^{(n-2)})^2}{2\mu_{n-1}\upxi_{n-1, 2}^2}+
	\frac{\left(-\Psi_2^{(n-2)}(\upeta, \upxi)-\frac{\upxi_{n-1, 3}(\upeta, \upxi)}{\upxi_{n-1, 2}}\Psi^{(n-2)}_3(\upeta, \upxi)\right)^2
	}{2\mu_n\upxi_{n, 3}^2}\nonumber\\
&+&
\frac{\left(
	\Psi_1^{(n-1)}(\upeta, \upxi)
	\right)^2
}{2\mu_n\upxi_{n, 3}^2}	-\sum_{0\le i<j\le n}\frac{m_{i+1}m_{j+1}}{\left|-\frac{M_{i}}{M_{i+1}}\upxi_{i}+
		\sum_{k=i+1}^{j-1}\frac{m_{k+1}}{M_{k+1}}\upxi_k+
		\upxi_{j}
		\right|} \ \forall\ (\varphi, \theta, \psi)\,.\end{eqnarray}

\subsection{Proof of~\eqref{etas}}\label{subsec:Proof-of-etas}
We proceed by steps. 	During the computations, we shall denote ${\rm C}_j:=x_j\times y_j$, so that ${\rm C}=\sum_{j=1}^n{\rm C}_n$. 
\vskip.1in
\noindent
$\bullet$ The expression of $\rm C$  is found out decomposing this vector on the basis $\{e_3, \gamma, {f}_3\}$:
$${\rm C}=\alpha e_3+\beta \gamma+\delta {f}_3$$
Note that such basis is normal, but not orthogonal. Precisely, by definition, we have
$$e_3\perp \gamma\,,\qquad \gamma\perp {f}_3\,,\quad e_3\cdot {f}_3=\cos\theta\,.$$
Taking then the inner products of ${\rm C}$ with $e_3$, $\gamma$ and ${f}_3$, and using the definitions of $\Theta$, $\Phi$ and $\Psi$ in~\eqref{ThetaPhi}, we find the system
$$\left\{
\begin{array}{lll}
\alpha+\delta\cos\theta=\Phi\\
\beta=\Theta\\
\alpha\cos\theta+\delta=\Psi
\end{array}
\right.$$
which  solves as
$$\left\{
\begin{array}{lll}
\displaystyle\alpha=\frac{\Phi-\Psi\cos\theta}{\sin^2\theta}\\\\
\displaystyle\beta=\Theta\\\\
\displaystyle\delta=\frac{\Psi-\Phi\cos\theta}{\sin^2\theta}
\end{array}
\right.$$
Then we have
\begin{eqnarray}\label{C*****}
{\rm C}&=&\frac{\Phi-\Psi\cos\theta}{\sin^2\theta}e_3+\Theta \gamma+\frac{\Psi-\Phi\cos\theta}{\sin^2\theta}{f}_3\nonumber\\
&=&{\mathcal R}_3\left(\varphi\right) \mathcal{R}_1(\theta)\mathcal{R}_3(\psi) \left(
\frac{\Phi-\Psi\cos\theta}{\sin^2\theta}
\mathcal{R}_3(-\psi)\mathcal{R}_1(-\theta) e_3^*+\Theta \mathcal{R}_3(-\psi)e_1^*+
\frac{\Psi-\Phi\cos\theta}{\sin^2\theta}e_3^*
\right)\nonumber\\
&=&{\mathcal R}_3\left(\varphi\right) \mathcal{R}_1(\theta)\mathcal{R}_3(\psi) \left(\begin{array}{ccc}\displaystyle\Theta\cos\psi+\frac{\Phi-\Psi\cos\theta}{\sin\theta}\sin\psi\\\\
\displaystyle-\Theta\sin\psi+\frac{\Phi-\Psi\cos\theta}{\sin\theta}\cos\psi\\\\
\displaystyle\Psi
\end{array}
\right)\,.
\end{eqnarray}
where we used (\ref{eq:appR3R1e1g}) from Lemma~\ref{lem:EulerAngles}.
\vskip.1in
\noindent
$\bullet$ 
We check that the expression of $\upzeta_{n-1, 1}$ is given by
\begin{eqnarray}\label{etan-11}
\upzeta_{n-1, 1}&=&y_{n-1}\cdot {f}_1=y_{n-1}\cdot \frac{x_n\times x_{n-1}}{|x_n\times x_{n-1}|}=x_{n}\cdot \frac{x_{n-1}\times y_{n-1}}{|x_n\times x_{n-1}|}\nonumber\\
&=& \frac{x_n\cdot({\rm C}-{\rm C}_n-\sum_{j=1}^{n-2}{\rm C}_j)}{|x_n\times x_{n-1}|}= \frac{x_n\cdot({\rm C}-\sum_{j=1}^{n-2}{\rm C}_j)}{|x_n\times x_{n-1}|}\nonumber\\
&=&-\frac{1}{\upxi_{n-1, 2}}(f_3\cdot {\rm C}-f_3\cdot\sum_{j=1}^{n-2}{\rm C}_j)=-\frac{1}{\upxi_{n-1, 2}}(\Psi-\Psi^{(n-2)}_3)
\end{eqnarray}
We have
\begin{eqnarray*}
\upzeta_{n-1, 1}=y_{n-1}\cdot {f}_1=y_{n-1}\cdot \frac{x_n\times x_{n-1}}{|x_n\times x_{n-1}|}=x_n \cdot  \frac{x_{n-1}\times y_{n-1}}{|x_n\times x_{n-1}|}
\end{eqnarray*}
Since from (\ref{xn xn-1})  $x_n=|x_n| f_3=\upxi_{n,3} f_3$ and $|x_n \times x_{n-1}|=|\upxi_n \times \upxi_{n-1}|=|\upxi_{n,3} \upxi_{n-1,2}|= -\upxi_{n,3} \upxi_{n-1,2}$ {because
of signs in (\ref{eq:upxi-ranges})}. Continuing we obtain
\begin{eqnarray*}
\upzeta_{n-1, 1}=-\frac{1}{\upxi_{n-1,2}} f_3 \cdot \left({\rm C} - {\rm C}_n - \sum_{j=1}^{n-2}{\rm C}_j \right)= -\frac{1}{\upxi_{n-1,2}}  \left( f_3 \cdot {\rm C}  - f_3 \cdot {\rm C}_n - \sum_{j=1}^{n-2} f_3\cdot {\rm C}_j\right)
\end{eqnarray*} 
Observe that $f_3 \cdot {\rm C}=\Phi$, $f_3 \cdot {\rm C}_n = f_3 \cdot (x_n \times y_n)=0$ because $f_3$ and $x_n$ are collinear and for $1\le j \leq n-2$ it holds
\begin{eqnarray*}
  f_3 \cdot {\rm C}_j &=& (\mathcal{R}_3(\varphi) \mathcal{R}_1(\theta)\mathcal{R}_3(\psi) e^*_3) \cdot (\mathcal{R}_3(\varphi) \mathcal{R}_1(\theta)\mathcal{R}_3(\psi) (\upxi_j \times \upzeta_j))= e^*_3 \cdot (\upxi_j \times \upzeta_j)=e^*_3 \cdot (\upxi_j \times \upeta_j) \\
  &=& \upxi_{j,1}  \upeta_{j,2} - \upxi_{j,2}  \upeta_{j,1} 
\end{eqnarray*}
We then arrive at~\eqref{etan-11}.

\vskip.1in
\noindent
$\bullet$ Using~\eqref{etan-11} and~\eqref{xn xn-1}, we find the expression of ${\rm C}_{n-1}$ as
\begin{eqnarray}\label{Cn-1}
{\rm C}_{n-1}&=&{\mathcal R}_3(\varphi){\mathcal R}_1(\theta){\mathcal R}_3(\psi)
\left(
\begin{array}{lll}
\displaystyle\upxi_{n-1, 2}\upzeta_{n-1, 3}-\upxi_{n-1, 3}\upzeta_{n-1, 2}\\\\
\displaystyle\upxi_{n-1, 3}\upzeta_{n-1, 1}-\upxi_{n-1, 1}\upzeta_{n-1, 3}\\\\
\displaystyle\upxi_{n-1, 1}\upzeta_{n-1, 2}-\upxi_{n-1, 2}\upzeta_{n-1, 1}
\end{array}
\right)\nonumber\\\nonumber\\
&=&{\mathcal R}_3(\varphi){\mathcal R}_1(\theta){\mathcal R}_3(\psi)
\left(
\begin{array}{ccc}
\displaystyle\upxi_{n-1, 2}\upeta_{n-1, 3}-\upxi_{n-1, 3}\upeta_{n-1, 2}\\\\
\displaystyle-\frac{\upxi_{n-1, 3}}{\upxi_{n-1, 2}}(\Psi-\Psi^{(n-2)}_3)\\\\
\displaystyle\Psi-\Psi^{(n-2)}_3
\end{array}
\right)
\end{eqnarray}
$\bullet$ Using the definitions in~\eqref{Psis} and Equations~\eqref{C*****} and~\eqref{Cn-1}, we find the expression of ${\rm C}_{n}$, which is
\begin{eqnarray}\label{Cn}
{\rm C}_{n}&=&{\rm C}-{\rm C}_{n-1}-\sum_{j=1}^{n-2}{\rm C}_j={\mathcal R}_3(\varphi){\mathcal R}_1(\theta){\mathcal R}_3(\psi)\cdot\nonumber\\
&&
\left(\begin{array}{ccc}\displaystyle\Theta\cos\psi+\frac{\Phi-\Psi\cos\theta}{\sin\theta}\sin\psi-\Psi_1^{(n-1)}\\\\
\displaystyle-\Theta\sin\psi+\frac{\Phi-\Psi\cos\theta}{\sin\theta}\cos\psi-\Psi_2^{(n-2)}+\frac{\upxi_{n-1, 3}}{\upxi_{n-1, 2}}(\Psi-\Psi^{(n-2)}_3)\\\\
\displaystyle0
\end{array}
\right)
\end{eqnarray}
$\bullet$  We check the formulae for $\upzeta_{n, 1}$ and $\upzeta_{n, 2}$ in~\eqref{etas}.  We need to identify $\upzeta_{n, 1}$ and $\upzeta_{n, 2}$ through
\begin{eqnarray*}
  y_n = \upzeta_{n, 1} f_1 + \upzeta_{n, 2}f_2 + \upzeta_{n,3}f_3
\end{eqnarray*}
We have the splitting
\begin{eqnarray*}
  y_n = (y_n,f_3) f_3 + (f_3 \times y_n) \times f_3
\end{eqnarray*}
so we reduce to identify $\zeta_{n, 1}$, $\zeta_{n, 2}$ through
\begin{eqnarray}\label{phi mapNEW}(f_3 \times y_n) \times f_3=\zeta_{n, 1} f_1+\zeta_{n, 2} f_2\,.\end{eqnarray}
Let  $\tilde{\rm C}_n$ be the vector appearing in the last row of~\eqref{Cn}. Using~\eqref{Cn} we obtain (recall that $f_3=\frac{x_n}{|x_n|}$ and $|x_n|=\upxi_{n,3}$, $\mathcal{R}(\varphi,\theta,\psi)=(f_1,f_2,f_3)$)
\begin{eqnarray*}
  (f_3 \times y_n) \times f_3&=& \frac{1}{|x_n|} (x_n \times y_n) \times f_3= \frac{1}{\upxi_{n,3}} {\rm C}_n \times f_3\\
 & =& \frac{1}{\upxi_{n,3}} (\mathcal{R}(\varphi,\theta,\psi)\tilde{\rm C}_n) \times  (\mathcal{R}(\varphi,\theta,\psi)e_3^*) 
  = \frac{1}{\upxi_{n,3}} \mathcal{R}(\varphi,\theta,\psi)(\tilde{\rm C}_n \times e_3^*) \\
  &=& \frac{1}{\upxi_{n,3}}\mathcal{R}(\varphi,\theta,\psi)\left(\begin{array}{ccc}\displaystyle
  -\Theta\sin\psi+\frac{\Phi-\Psi\cos\theta}{\sin\theta}\cos\psi-\Psi_2^{(n-2)}+\frac{\upxi_{n-1, 3}}{\upxi_{n-1, 2}}(\Psi-\Psi^{(n-2)}_3)\\
\displaystyle- \Theta\cos\psi-\frac{\Phi-\Psi\cos\theta}{\sin\theta}\sin\psi+\Psi_1^{(n-1)}\\
0
\end{array}
\right) \\
&=& \frac{1}{\upxi_{n,3}}\left(-\Theta\sin\psi+\frac{\Phi-\Psi\cos\theta}{\sin\theta}\cos\psi-\Psi_2^{(n-2)}+\frac{\upxi_{n-1, 3}}{\upxi_{n-1, 2}}(\Psi-\Psi^{(n-2)}_3) \right)f_1\\
 &+& \frac{1}{\upxi_{n,3}}\left(- \Theta\cos\psi-\frac{\Phi-\Psi\cos\theta}{\sin\theta}\sin\psi+\Psi_1^{(n-1)} \right)f_2
\end{eqnarray*}
which allows to identify $\upzeta_{n, 1}$ and $\upzeta_{n, 2}$ as in~\eqref{etas}, via~\eqref{phi mapNEW}. $\qquad\square$

\section{Coordinates suited to total collision}
Leaving $(\Phi, \Theta, \Psi, \varphi, \theta, \psi)$ unvaried, we modify, canonically,  the coordinates $(\upeta, \upxi)$   in order to have a set better suited to total collision. We proceed along the ideas of~\cite{moeckelM2025}, adapted to our situation. In order to have lighter notations, we write \begin{eqnarray*}R:=\upeta_{n, 3}\,,\qquad r:=\xi_{n, 3}\,.
\end{eqnarray*}
Then we define
\begin{eqnarray*}
(\sigma, \varrho):=\left(\frac{\hat\upxi}{r }\,,\  |\upxi|_\mu\right)\end{eqnarray*}
where
$$\hat\upxi:=(\upxi_1,\ldots, \upxi_{n-1})\,,\qquad |\upxi|_\mu:=\sqrt{\sum_{i=1}^{n}\mu_i |\upxi_i|^2}\,.$$
Condition~\eqref{eq:upxi-ranges} implies
\begin{eqnarray}
\varrho>0\,,\qquad \sigma\in \Sigma:=\Big\{\sigma\in {\mathbb R}^{3(n-2)+2}:\  \upsigma_{n-1, 2}\in \mathbb R_-\setminus\{0\}\Big\}\,.  \label{eq:upsigma-ranges}
\end{eqnarray}
The inverse change is
\begin{eqnarray}
\label{inverse formulae}\upxi=(\hat\upxi\,, r)= \varrho\frac{(\sigma, 1)}{|(\sigma, e)|_\mu}\,.
\end{eqnarray}
The notation~\eqref{inverse formulae} stands for
$$(\sigma,1):=(\sigma_1,\dots,\sigma_{n-1},1) \in \mathbb{R}^{3(n-1)+1}\,,\qquad (\sigma,e) := (\sigma_1,\dots,\sigma_{n-1},(0,0,1))$$
We determine the momenta $({\mathcal S}, {\mathcal R})$   conjugated to $(\sigma, \varrho)$ 
through an exact and symplectic transformation. Namely,
 we want to obtain
\begin{equation}
  {\mathcal S}\cdot d\sigma+{\mathcal R}d\varrho=\sum_{i=1}^{n-1}\upeta_i \cdot d\upxi_j + R dr
\end{equation}
We have
\begin{eqnarray}\label{Mathieu}
{\mathcal S}\cdot d\sigma+{\mathcal R}d\varrho&=&{\mathcal S}\cdot \left(\frac{d\hat\upxi}{r }-\hat\upxi\frac{dr}{r ^2}\right)+{\mathcal R}\left(\frac{\sum_{i=1}^{n-1}\mu_i \upxi_i \cdot d\upxi_i+\mu_n r  dr }{ |\upxi|_\mu}\right)\nonumber\\
&=&{\mathcal S}\cdot \left(\frac{|(\sigma, e)|_\mu}{\varrho}d\upxi-\sigma|(\sigma, e)|_\mu\frac{dr}{\varrho}\right)\nonumber\\
&+&\frac{\mathcal R}{|(\sigma, e)|_\mu}\left({\sum_{i=1}^{n-1}\mu_i \sigma_i\cdot d\upxi_i+\mu_n  dr }\right)\end{eqnarray}
where we have used~\eqref{inverse formulae}.
Equation~\eqref{Mathieu} provides
\begin{eqnarray}\label{inverse formulaeNEW}\upeta_i&=&\frac{|(\sigma, e)|_\mu}{\varrho}{\mathcal S}_i+\frac{\mathcal R}{|(\sigma, e)|_\mu}\mu_i\sigma_i\,,\quad i=1\,,\ldots\,,n-1\nonumber\\
R &=&-\frac{|(\sigma, e)|_\mu}{\varrho}\sum_{i=1}^{n-1}{\mathcal S}_i\cdot\sigma_i +\mu_n \frac{\mathcal R}{|(\sigma, e)|_\mu}
\end{eqnarray}
When the change~\eqref{inverse formulae},~\eqref{inverse formulaeNEW} is inserted into the Hamiltonian~\eqref{HNEWNEW}, the functions $\Psi_k^{(p)}(\upeta, \upxi)$ thereby appearing are transformed into $\Psi_k^{(p)}({\mathcal S}, \sigma)$ because of their skew--symmetry (see the definition in~\eqref{Psis}). 
After a few computations, it can be seen that $H$ becomes
\begin{eqnarray}\label{Hcomplete} H&=&\frac{{\mathcal R}^2}{2}+\frac{|(\sigma, e)|^2_\mu}{2\varrho^2}\left[\sum_{i=1}^{n-1}\frac{|{\mathcal S}_i|^2}{\mu_i}+\frac{1}{\mu_n}\left(\sum_{i=1}^{n-1}{\mathcal S}_i\cdot\sigma_i\right)^2+
	\frac{(\Psi-\Psi_{3}^{(n-2)})^2}{2\mu_{n-1}\sigma_{n-1, 2}^2}\right.
\nonumber\\
&&+
\frac{\left(-\Theta\sin\psi+\frac{\Phi-\Psi\cos\theta}{\sin\theta}\cos\psi-\Psi_2^{(n-2)}+\frac{\sigma_{n-1, 3}}{\sigma_{n-1, 2}}(\Psi-\Psi^{(n-2)}_3)\right)^2
}{2\mu_n}\nonumber\\
&&+\left.
\frac{\left(
	-\Theta\cos\psi-\frac{\Phi-\Psi\cos\theta}{\sin\theta}\sin\psi+\Psi_1^{(n-1)}
	\right)^2
}{2\mu_n}\right]	\nonumber\\
&&-\frac{|(\sigma, e)|_\mu}{\varrho}\sum_{0\le i<j\le n}\frac{m_{i+1}m_{j+1}}{\left|-\frac{M_{i}}{M_{i+1}}\sigma_{i}+
	\sum_{k=i+1}^{j-1}\frac{m_{k+1}}{M_{k+1}}\sigma_k+
	\sigma_{j}
	\right|} \end{eqnarray}
	Instead, $H_0$ in~\eqref{H0OLD}  becomes
\begin{eqnarray}\label{H0}
H_0&=&\frac{{\mathcal R}^2}{2}+\frac{|(\sigma, e)|^2_\mu}{2\varrho^2}\left[\sum_{i=1}^{n-1}\frac{|{\mathcal S}_i|^2}{\mu_i}+\frac{1}{\mu_n}\left(\sum_{i=1}^{n-1}{\mathcal S}_i\cdot\sigma_i\right)^2+
\frac{\Psi_{3}^{(n-2)}({\mathcal S}, \sigma)^2}{2\mu_{n-1}\sigma_{n-1, 2}^2}\right.
\nonumber\\
&&+
\frac{\left(\Psi_2^{(n-2)}+\frac{\sigma_{n-1, 3}}{\sigma_{n-1, 2}}\Psi^{(n-2)}_3\right)^2
}{2\mu_n}+\left.
\frac{\left(
\Psi_1^{(n-1)}
	\right)^2
}{2\mu_n}\right]\nonumber\\
&&	-\frac{|(\sigma, e)|_\mu}{\varrho}\sum_{0\le i<j\le n}\frac{m_{i+1}m_{j+1}}{\left|-\frac{M_{i}}{M_{i+1}}\sigma_{i}+
	\sum_{k=i+1}^{j-1}\frac{m_{k+1}}{M_{k+1}}\sigma_k+
	\sigma_{j}
	\right|} 
\end{eqnarray}

\section{Regularizations of~\eqref{existence conditionsNEW}}\label{sec: regularization}


Condition~\eqref{existence conditionsNEW} strongly affects  the Hamiltonian vector--field  of $H$ -- especially for the part which governs the motion of\footnote{Computing the Hamilton equations of $H$ in~\eqref{Hcomplete} for $\varphi$, $\theta$, $\psi$ and next assuming~\eqref{condition}, one obtains
\begin{eqnarray*}
	\left\{
	\begin{array}{lll}
		\displaystyle\dot\varphi=-\frac{|(\sigma, e)|^2_\mu}{\mu_n\varrho^2}\left[
		\frac{\cos\psi}{\sin\theta}
		\left(\Psi_2^{(n-2)}({\mathcal S}, \sigma)+\frac{\sigma_{n-1, 3}}{\sigma_{n-1, 2}}\Psi^{(n-2)}_3({\mathcal S}, \sigma)\right)
		+\frac{\sin\psi}{\sin\theta}\Psi_1^{(n-1)}({\mathcal S}, \sigma)
		\right]\\\\
		\displaystyle\dot\theta=\frac{|(\sigma, e)|^2_\mu}{\mu_n\varrho^2}\left[
		\sin\psi
		\left(\Psi_2^{(n-2)}({\mathcal S}, \sigma)+\frac{\sigma_{n-1, 3}}{\sigma_{n-1, 2}}\Psi^{(n-2)}_3({\mathcal S}, \sigma)\right)
		-	\cos\psi
		\Psi_1^{(n-1)}({\mathcal S}, \sigma)
		\right]\\\\
		\displaystyle\dot\psi=
		\frac{|(\sigma, e)|^2_\mu}{\varrho^2}\left[
		-\frac{\Psi_3^{(n-2)}({\mathcal S}, \sigma)}{\mu_{n-1}\sigma_{n-1, 2}^2}+
		\frac{1}{\mu_n}\left(
		-\left(-\frac{\cos\theta}{\sin\theta}\cos\psi+\frac{\sigma_{n-1,3}}{\sigma_{n-1, 2}}\right)
		\right.\right.\\
		\qquad\left.\left.
		\left(\Psi_2^{(n-2)}({\mathcal S}, \sigma)+\frac{\sigma_{n-1, 3}}{\sigma_{n-1, 2}}\Psi^{(n-2)}_3({\mathcal S}, \sigma)\right)+\frac{\cos\theta}{\sin\theta}\sin\psi
		\Psi_1^{(n-1)}({\mathcal S}, \sigma)
		\right)\right]
	\end{array}
	\right.	
\end{eqnarray*}
which are singular when $\theta=0$, $\pi$.
}  $\varphi$, $\theta$ and $\psi$ --  showing  singularities in correspondence of the values $0$, $\pi$ for the coordinate $\theta$, even under conditions~\eqref{condition}. In order not to overload the result with unnecessary restrictions, in this section we switch to new coordinates which make the relevant vector--field regular also when~\eqref{existence conditionsNEW} is not verified. 
First, we  define  $w=(u, v, \alpha)\in {\mathbb R}^2\times{\mathbb T}$ via
\begin{eqnarray}\label{regularization}
\left\{
\begin{array}{lll}
\displaystyle u=
\sin\theta\cos\psi\\
\displaystyle v=
\sin\theta\sin\psi\\
\displaystyle \alpha=\varphi+\psi
\end{array}
\right.
\end{eqnarray}
We show in a moment that relation~\eqref{regularization}  triggers  two canonical transformations  between the 6--ples $(\Phi, \Theta, \Psi, \varphi, \theta, \psi)$ and $(W, w)=(U, V, A, u, v, \alpha)$ (hence, keeping $({\mathcal R}, {\mathcal S}, \varrho, \sigma)$ unvaried)
{\small
\begin{eqnarray}\label{regularizations}\phi_-^{\rm reg}:\ (U, V, A, u, v, \alpha)\in{\mathbb R}^3\times B^2_1(0,0)\setminus\{(0,0)\}\times{\mathbb T}\to(\Phi, \Theta, \Psi, \varphi, \theta, \psi)\in {\mathbb R}^3\times {\mathbb T}\times\left(0, \frac{\pi}{2}\right) \times{\mathbb T}
\nonumber\\
\phi_+^{\rm reg}:\ (U, V, A, u, v, \alpha)\in{\mathbb R}^3\times B^2_1(0,0)\setminus\{(0,0)\}\times{\mathbb T}\to(\Phi, \Theta, \Psi, \varphi, \theta, \psi)\in {\mathbb R}^3\times {\mathbb T}\times\left(\frac{\pi}{2}, \pi\right) \times{\mathbb T}
\end{eqnarray}}
   defined through
\begin{eqnarray}\label{symplectic regularization}
\left\{
\begin{array}{ll}
\Phi=A\\\\
\displaystyle\Theta=Uu\frac{\sqrt{1-u^2-v^2}}{\sqrt{u^2+v^2}}+Vv \frac{\sqrt{1-u^2-v^2}}{\sqrt{u^2+v^2}}\\\\
\Psi=-Uv+Vu+A
\end{array}
\right.\quad \left\{\begin{array}{lll}
\displaystyle\varphi=\alpha-\arg(u, v)\\\\
\displaystyle \theta=\frac{\pi}{2}\mp\left(\frac{\pi}{2}-\sin^{-1}\sqrt{u^2+v^2}\right)\\\\
\displaystyle  \psi=\arg(u, v)
  \end{array}
  \right.
\end{eqnarray}

where 
$\arg(u, v)\in \mathbb T$ is the unique angle defined through
$$\cos\arg(u, v)=\frac{u}{\sqrt{u^2+v^2}}\,,\qquad \sin\arg(u, v)=\frac{v}{\sqrt{u^2+v^2}}\qquad \forall\ (u, v)\in B^2_1(0,0)\setminus\{(0,0)\}\,.$$


\begin{remark}\label{zero ang mom}\rm
Under the constraints specified in~\eqref{regularizations}, the transformations $\phi^{\rm reg}_\pm$
are invertible. In particular, the
zero angular momentum condition~\eqref{condition} remains unvaried:
\begin{eqnarray}\label{conditionNEW}U=V=A=0\,.\end{eqnarray}
\end{remark}

\begin{remark}\rm 
In Proposition~\ref{prop:reg} below we shall prove that, under the constraint~\eqref{conditionNEW}, the triples $w=(u, v, \alpha)$, pre--images of $\phi^{\rm reg}_+$ or $\phi^{\rm reg}_+$, satisfy the same ODE, which here we denote as
\begin{eqnarray}\label{sameODE}\dot w=f({\mathcal S}, \sigma, u, v)\,.\end{eqnarray}The main point is that, for fixed $({\mathcal S}, \sigma)$, $f({\mathcal S}, \sigma, u, v)$ is $C^1$ in the open ball $B^2_1(0,0)$ and $C^0 $ in its clusure $\overline B^2_1(0,0)$, hence the  new coordinates are allowed to visit zones in phase space which were prevented for the previous ones. In particular, the value $(u, v)=(0,0)$ corresponds to $\theta=0$ for $\phi^{\rm reg}_+$, to $\theta=\pi$ for $\phi^{\rm reg}_-$.
Of course, the motion of $w=(u, v, \alpha)$, solution of~\eqref{sameODE}, does not allow to infer the one of the Euler triple $(\varphi, \theta, \psi)$, because of the indeterminacy of the sign in~\eqref{symplectic regularization}. However, it allows to prove that
 $(\varphi(t), \theta(t), \psi(t))$ has a finite limit as $t\to T^-$, just proving that this holds for $w(t)=(u(t), v(t), \alpha(t))$.
\end{remark}
First of all, we aim to prove that
\begin{proposition} The changes $\phi^{\rm reg}_\pm$ in~\eqref{regularizations}$\div$\eqref{symplectic regularization}
are exact symplectic.  
\end{proposition}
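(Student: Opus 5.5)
The plan is to verify directly the identity of Liouville one-forms
\begin{eqnarray*}
\Phi\, d\varphi+\Theta\, d\theta+\Psi\, d\psi=U\,du+V\,dv+A\,d\alpha
\end{eqnarray*}
on the domains specified in~\eqref{regularizations}. Since $({\mathcal R}, {\mathcal S}, \varrho, \sigma)$ are left unchanged, this identity gives exact symplecticity of $\phi^{\rm reg}_\pm$. The natural tool is polar coordinates in the $(u,v)$--plane. I will write $\rho:=\sqrt{u^2+v^2}\in(0,1)$, so that by~\eqref{symplectic regularization} $\psi=\arg(u,v)$, $u=\rho\cos\psi$, $v=\rho\sin\psi$ and $\sin\theta=\rho$. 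The Euler triple is then a function of $(\rho,\psi,\alpha)$ alone, and the whole computation reduces to a two--dimensional polar identity plus a shift of the angle $\varphi$.

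First I handle the terms in $d\varphi$ and $d\psi$. Using $\varphi=\alpha-\psi$, $\Phi=A$ and $\Psi=A+Vu-Uv$, I get
\begin{eqnarray*}
\Phi\,d\varphi+\Psi\,d\psi=A\,d\alpha+(\Psi-\Phi)\,d\psi=A\,d\alpha+(Vu-Uv)\,d\psi\,.
\end{eqnarray*}
On the other side, the elementary polar identity gives
\begin{eqnarray*}
U\,du+V\,dv=(Uu+Vv)\,\frac{d\rho}{\rho}+(Vu-Uv)\,d\psi\,.
\end{eqnarray*}
So the angular parts already agree. It remains to show $\Theta\,d\theta=(Uu+Vv)\,d\rho/\rho$.

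For $\phi^{\rm reg}_-$, the formula for $\theta$ gives $\theta=\sin^{-1}\rho\in(0,\pi/2)$. Hence $d\theta=d\rho/\sqrt{1-\rho^2}$, and the given expression $\Theta=(Uu+Vv)\sqrt{1-\rho^2}/\rho$ yields exactly $\Theta\,d\theta=(Uu+Vv)\,d\rho/\rho$, which closes this case.

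The step I expect to be the main obstacle is the sign bookkeeping for $\phi^{\rm reg}_+$. There $\theta=\pi-\sin^{-1}\rho\in(\pi/2,\pi)$, so $d\theta=-d\rho/\sqrt{1-\rho^2}$. For the identity to hold, the factor $\sqrt{1-u^2-v^2}$ in $\Theta$ must therefore be read as $\cos\theta$, which is $\mp\sqrt{1-\rho^2}$ on the two branches; equivalently, one uses $\Theta=(Uu+Vv)\cot\theta/\rho$. With this convention both cases are handled uniformly by the single computation
\begin{eqnarray*}
\Theta\,d\theta=(Uu+Vv)\,\frac{\cos\theta}{\sin\theta}\,\frac{d\rho}{\cos\theta}\,\frac{1}{\rho}\,\sin\theta=(Uu+Vv)\,\frac{d\rho}{\rho}\,,
\end{eqnarray*}
using $d\rho=\cos\theta\, d\theta$ and $\sin\theta=\rho$. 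I would state this sign convention explicitly for $\phi^{\rm reg}_+$. It does not affect the zero angular momentum condition in Remark~\ref{zero ang mom}, since $U=V=A=0$ forces $\Phi=\Theta=\Psi=0$ either way.

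Finally, invertibility on the stated domains is immediate. Given the Euler angles, set $(u,v)=\sin\theta(\cos\psi,\sin\psi)$ and $\alpha=\varphi+\psi$. Then $(U,V)$ is recovered from the linear system for $\Theta$ and $\Psi-\Phi$, whose determinant is nonzero because $\rho\ne0$ and $\cos\theta\ne0$. This completes the exactness and symplecticity claim.
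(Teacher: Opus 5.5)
Your proof is correct and is essentially the paper's argument. The paper also checks $U\,du+V\,dv+A\,d\alpha=\Phi\,d\varphi+\Theta\,d\theta+\Psi\,d\psi$ by substituting $u=\sin\theta\cos\psi$, $v=\sin\theta\sin\psi$, $\alpha=\varphi+\psi$ and regrouping; your polar splitting with $\rho=\sin\theta$ is the same computation. Your sign remark for $\phi^{\rm reg}_+$ is right and needed: the paper's proof silently rewrites $\Theta$ as $U\cos\theta\cos\psi+V\cos\theta\sin\psi$, i.e. it reads $\sqrt{1-u^2-v^2}$ as $\cos\theta$, which is exactly the convention you make explicit, and with the literal formula $\phi^{\rm reg}_+$ would not be symplectic.
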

\proof Rewriting the former system (using the latter) as
\begin{eqnarray*}
\left\{
\begin{array}{ll}
\Phi=A\\
\Theta=U\cos\theta\cos\psi+V\cos\theta\sin\psi\\
\Psi=-U\sin\theta \sin\psi+V\sin\theta \cos\psi+A
\end{array}
\right.
\end{eqnarray*}
we have that the standard $1$--form is conserved, for both $\phi^{\rm reg}_\pm$:
\begin{eqnarray*}
U du+Vdv+Ad\alpha&=&U(\cos\theta\cos\psi d\theta-\sin\theta \sin\psi d\psi)+V(\cos\theta \sin\psi d\theta+\sin\theta \cos\psi d\psi)\nonumber\\
&+&A(d\varphi+d\psi)
\nonumber\\
&=&\Phi d\varphi+\Theta d\theta+\Psi d\psi\,.\qquad \square
\end{eqnarray*}

\begin{proposition}\label{prop:reg}
	The motion of the  pre--image $w=(u, v, \alpha)$ of $\phi^{\rm reg}_+$ $($respectively, of $\phi^{\rm reg}_-$$)$ when the total angular momentum $\rm C$ vanishes is ruled by the ODE
	\begin{eqnarray}\label{uva motion}
	\left\{
	\begin{array}{lll}
\dot u=\frac{|(\sigma, e)|^2_\mu}{2\varrho^2}\left[\frac{v\Psi^{(n-2)}_3({\mathcal S}, \sigma)}{\mu_{n-1}\sigma_{n-1, 2}^2}+\frac{\Psi_2^{(n-2)}({\mathcal S}, \sigma)
	+\frac{\sigma_{n-1, 3}}{\sigma_{n-1, 2}}\Psi_3^{(n-2)}({\mathcal S}, \sigma)
	}{\mu_n}\frac{\sigma_{n-1, 3}}{\sigma_{n-1, 2}} v-\frac{\Psi_1^{(n-1)}({\mathcal S}, \sigma)}{\mu_n}\sqrt{1-u^2-v^2}\right]\\\\
\dot v=\frac{|(\sigma, e)|^2_\mu}{2\varrho^2}\left[-\frac{u\Psi_3^{(n-2)}({\mathcal S}, \sigma)}{\mu_{n-1}\sigma_{n-1, 2}^2}-
	\frac{\Psi_2^{(n-2)}({\mathcal S}, \sigma)
	+\frac{\sigma_{n-1, 3}}{\sigma_{n-1, 2}}\Psi_3^{(n-2)}({\mathcal S}, \sigma)
	}{\mu_n}\left(\frac{\sigma_{n-1, 3}}{\sigma_{n-1, 2}} u
	-\sqrt{1-u^2-v^2}
	\right)\right]\\\\
\dot\alpha=\frac{|(\sigma, e)|^2_\mu}{2\varrho^2}\left[-\frac{\Psi^{(n-2)}_3({\mathcal S}, \sigma)}{\mu_{n-1}\sigma_{n-1, 2}^2}-\frac{\Psi_2^{(n-2)}({\mathcal S}, \sigma)
	+\frac{\sigma_{n-1, 3}}{\sigma_{n-1, 2}}\Psi_3^{(n-2)}({\mathcal S}, \sigma)}{\mu_n}\left(
\frac{u}{1+\sqrt{1-u^2-v^2}}+\frac{\sigma_{n-1, 3}}{\sigma_{n-1, 2}} 	\right)\right]
	\end{array}
	\right.
	\end{eqnarray}
\end{proposition}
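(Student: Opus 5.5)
The plan is to compute the Hamilton equations for $w=(u,v,\alpha)$ directly from the Hamiltonian~\eqref{Hcomplete} written in the new variables. Since $\phi^{\rm reg}_\pm$ are exact symplectic, $\dot u=\partial H/\partial U$, $\dot v=\partial H/\partial V$ and $\dot\alpha=\partial H/\partial A$, evaluated at $U=V=A=0$ (the condition~\eqref{conditionNEW}, equivalent to ${\rm C}=0$ by Remark~\ref{zero ang mom}). The momenta $(\Phi,\Theta,\Psi)$ enter $H$ only through three expressions:
\[
Q_1:=\Psi,\qquad Q_2:=-\Theta\sin\psi+\tfrac{\Phi-\Psi\cos\theta}{\sin\theta}\cos\psi,\qquad Q_3:=-\Theta\cos\psi-\tfrac{\Phi-\Psi\cos\theta}{\sin\theta}\sin\psi.
\]
In~\eqref{C*****}, these are (up to sign and order) the components of ${\rm C}$ in the moving frame $f_1,f_2,f_3$. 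So the first step is to express $Q_1,Q_2,Q_3$ in terms of $(U,V,A,u,v,\alpha)$ using~\eqref{symplectic regularization}. They are linear in $(U,V,A)$. This is the key simplification: at $U=V=A=0$ all $Q_k$ vanish, so by the chain rule only $\partial Q_k/\partial(U,V,A)$ multiplied by the linear coefficients of $Q_k$ in $H$ survives.

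Concretely, $H=\dots+\frac{|(\sigma,e)|_\mu^2}{2\varrho^2}\big[\frac{(Q_1-\Psi_3^{(n-2)})^2}{2\mu_{n-1}\sigma_{n-1,2}^2}+\frac{(Q_2-\Psi_2^{(n-2)}+\frac{\sigma_{n-1,3}}{\sigma_{n-1,2}}(Q_1-\Psi_3^{(n-2)}))^2}{2\mu_n}+\frac{(Q_3+\Psi_1^{(n-1)})^2}{2\mu_n}\big]+\dots$. At $Q=0$ this gives $\partial H/\partial Q_1$, $\partial H/\partial Q_2$ and $\partial H/\partial Q_3$ as explicit multiples of $\Psi_3^{(n-2)}$, of $\Psi_2^{(n-2)}+\frac{\sigma_{n-1,3}}{\sigma_{n-1,2}}\Psi_3^{(n-2)}$, and of $\Psi_1^{(n-1)}$. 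The next step is to compute $\partial Q_k/\partial(U,V,A)$. Using $\Theta=\cos\theta(U\cos\psi+V\sin\psi)$ and $\Psi=\sin\theta(-U\sin\psi+V\cos\psi)+A$ (from the proof of the previous proposition), together with $\Phi=A$:
\begin{itemize}
\item $Q_1$ contributes $(-v,u,1)$.
\item $\frac{\Phi-\Psi\cos\theta}{\sin\theta}=\frac{A(1-\cos\theta)}{\sin\theta}-\cos\theta(-U\sin\psi+V\cos\psi)$.
\end{itemize}
Substituting and using $\sin^2\psi+\cos^2\psi=1$ should collapse to $Q_2=-\cos\theta\,U+\frac{1-\cos\theta}{\sin\theta}\cos\psi\,A$ and $Q_3=-\cos\theta\,V-\frac{1-\cos\theta}{\sin\theta}\sin\psi\,A$. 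With $\cos\theta=\pm\sqrt{1-u^2-v^2}$ and $\frac{1-\cos\theta}{\sin\theta}\cos\psi=\frac{u}{1+\cos\theta}$, these produce the square roots and the term $\frac{u}{1+\sqrt{1-u^2-v^2}}$ in~\eqref{uva motion}.

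The last step is to assemble $\dot u,\dot v,\dot\alpha$ and check that they match~\eqref{uva motion} term by term, including the overall factor $\frac{|(\sigma,e)|_\mu^2}{2\varrho^2}$ and the cross term $\frac{\sigma_{n-1,3}}{\sigma_{n-1,2}}$ coming from $Q_1$ inside the $Q_2$-square.

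I expect the main obstacle to be the sign bookkeeping between $\phi^{\rm reg}_+$ and $\phi^{\rm reg}_-$. The sign of $\cos\theta$ flips between the two, and the claim is that both give the same ODE. My plan here is to check, using $\theta=\frac{\pi}{2}\mp(\cdots)$, which branch yields $\cos\theta=+\sqrt{1-u^2-v^2}$. The branch reaching $(u,v)=(0,0)$ at $\theta=0$ gives $1+\cos\theta\to2$, consistent with the nonsingular denominator $1+\sqrt{1-u^2-v^2}$. For the other branch, I would verify that the stated formula still arises, possibly after reinterpreting the labelling of $\phi^{\rm reg}_\pm$. The expected regularity ($C^1$ in the open ball) then follows at once, since $\frac{1-\cos\theta}{\sin\theta}$ has been rewritten as $\frac{\sin\theta}{1+\cos\theta}$.
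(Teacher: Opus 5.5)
Your route is the paper's: substitute the regularizing change into \eqref{Hcomplete}, note that $(\Phi,\Theta,\Psi)$ enter only through $Q_1=\Psi$, $Q_2$, $Q_3$, which are linear in $(U,V,A)$, and differentiate in $U,V,A$ at $U=V=A=0$. However, the step you only predict (``should collapse to'') comes out with $U$ and $V$ interchanged. From $\Theta=\cos\theta\,(U\cos\psi+V\sin\psi)$ and $\frac{\Phi-\Psi\cos\theta}{\sin\theta}=\frac{1-\cos\theta}{\sin\theta}A+\cos\theta\,(U\sin\psi-V\cos\psi)$, the $U$-terms cancel in $Q_2$ and the $V$-terms cancel in $Q_3$. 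Hence
\[
Q_2=-\cos\theta\,V+\frac{u}{1+\cos\theta}\,A,\qquad Q_3=-\cos\theta\,U-\frac{v}{1+\cos\theta}\,A ,
\]
which is exactly what the paper computes. With your version, $\dot u$ would receive $\frac{1}{\mu_n}\bigl(\Psi_2^{(n-2)}+\frac{\sigma_{n-1,3}}{\sigma_{n-1,2}}\Psi_3^{(n-2)}\bigr)\sqrt{1-u^2-v^2}$ and $\dot v$ would receive $-\frac{1}{\mu_n}\Psi_1^{(n-1)}\sqrt{1-u^2-v^2}$. These are the square-root terms of \eqref{uva motion} with their places swapped, so your final term-by-term check would fail. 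Redo the expansion; with the correct $Q_2,Q_3$ the $\dot u$ and $\dot v$ equations come out as stated.

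Two further things to expect when you assemble. First, with the correct $Q_3$, $\partial_A Q_3=-\frac{v}{1+\sqrt{1-u^2-v^2}}$ adds $-\frac{\Psi_1^{(n-1)}}{\mu_n}\frac{v}{1+\sqrt{1-u^2-v^2}}$ inside the bracket of $\dot\alpha$. The paper's own transformed Hamiltonian, whose last square is $\bigl(-U\sqrt{1-u^2-v^2}-\frac{Av}{1+\sqrt{1-u^2-v^2}}+\Psi_1^{(n-1)}\bigr)^2/(2\mu_n)$, produces this term too. It is nevertheless missing from the printed $\dot\alpha$, so a literal match is impossible there. The omission is harmless for Theorem~\ref{main}, since the term is linear in $\mathcal S$ with a bounded coefficient.

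Second, the branch question you left open does not resolve the way you hope. The paper's computation uses $\sin\theta=\sqrt{u^2+v^2}$ and $\cos\theta=+\sqrt{1-u^2-v^2}$ throughout. Its formula for $\Theta$ in \eqref{symplectic regularization} agrees with the symplectic requirement $\Theta=\cos\theta\,(U\cos\psi+V\sin\psi)$ only when $\cos\theta>0$. So it really covers only the branch $\theta\in(0,\pi/2)$. On the branch $\cos\theta=-\sqrt{1-u^2-v^2}$, the square-root terms change sign. The $A$-coefficients become $\frac{u}{1-\sqrt{1-u^2-v^2}}$ and $-\frac{v}{1-\sqrt{1-u^2-v^2}}$, which are unbounded as $(u,v)\to(0,0)$; this reflects the fact that at $\theta=\pi$ the frame depends on $\varphi-\psi$, not $\varphi+\psi$. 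Relabelling $\phi^{\rm reg}_\pm$ only decides which map the statement refers to; it cannot make both satisfy \eqref{uva motion}. State and prove the proposition for the $\cos\theta>0$ branch.
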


\proof 
We compute $H$ after the change
\eqref{symplectic regularization}, by replacing such formulae into the Hamiltonian~\eqref{Hcomplete}. We first compute how
the quantities
\begin{eqnarray*}
-\Theta\sin\psi+\frac{\Phi-\Psi\cos\theta}{\sin\theta}\cos\psi\,,\qquad -\Theta\cos\psi-\frac{\Phi-\Psi\cos\theta}{\sin\theta}\sin\psi
\end{eqnarray*}
appearing in~\eqref{Hcomplete} are transformed. We have

\begin{eqnarray*}
&&-\Theta\sin\psi+\frac{\Phi-\Psi\cos\theta}{\sin\theta}\cos\psi
\nonumber\\
&=&-\frac{\sqrt{1-u^2-v^2}}{\sqrt{u^2+v^2}}\left(Uu+Vv\right) \frac{v}{\sqrt{u^2+v^2}}+\frac{
A(1-\sqrt{1-u^2-v^2})
-(-Uv+Vu)
\sqrt{1-u^2-v^2}
}
{\sqrt{u^2+v^2}
}\frac{u}{\sqrt{u^2+v^2}}\nonumber\\
&=&-V\sqrt{1-u^2-v^2}+\frac{Au}{1+\sqrt{1-u^2-v^2}}
\end{eqnarray*}
and, similarly,
\begin{eqnarray*}
&&
-\Theta\cos\psi-\frac{\Phi-\Psi\cos\theta}{\sin\theta}\sin\psi
=-U\sqrt{1-u^2-v^2}-\frac{Av}{1+\sqrt{1-u^2-v^2}}\,.
\end{eqnarray*}
Then the expression of $H$ after
the composition with 
$\phi^{\rm reg}_+$ $($respectively, $\phi^{\rm reg}_-$$)$  is

\begin{eqnarray*} H&=&\frac{{\mathcal R}^2}{2}+\frac{|(\sigma, e)|^2_\mu}{2\varrho^2}\left[\sum_{i=1}^{n-1}\frac{|{\mathcal S}_i|^2}{\mu_i}+\frac{1}{\mu_n}\left(\sum_{i=1}^{n-1}{\mathcal S}_i\cdot\sigma_i\right)^2+
	\frac{(-Uv+Vu+A-\Psi_{3}^{(n-2)})^2}{2\mu_{n-1}\sigma_{n-1, 2}^2}\right.
\nonumber\\
&&+
\frac{\left(-V\sqrt{1-u^2-v^2}+\frac{Au}{1+\sqrt{1-u^2-v^2}}-\Psi_2^{(n-2)}+\frac{\sigma_{n-1, 3}}{\sigma_{n-1, 2}}(-Uv+Vu+A-\Psi^{(n-2)}_3)\right)^2
}{2\mu_n}\nonumber\\
&&+\left.
\frac{\left(
	-U\sqrt{1-u^2-v^2}-\frac{Av}{1+\sqrt{1-u^2-v^2}}+\Psi_1^{(n-1)}
	\right)^2
}{2\mu_n}\right]	\nonumber\\
&&-\frac{|(\sigma, e)|_\mu}{\varrho}\sum_{0\le i<j\le n}\frac{m_{i+1}m_{j+1}}{\left|-\frac{M_{i}}{M_{i+1}}\sigma_{i}+
	\sum_{k=i+1}^{j-1}\frac{m_{k+1}}{M_{k+1}}\sigma_k+
	\sigma_{j}
	\right|} \end{eqnarray*}
By Remark~\ref{zero ang mom}, under conditions~\eqref{regularizations}, the ODE for the triple $w=(u, v, \alpha)$ when the angular momentum vanishes is
$$\dot u=\left(\partial_U H\right)|_{\eqref{conditionNEW}}\,,\qquad \dot v=\left(\partial_V H\right)|_{\eqref{conditionNEW}}\,,\qquad \dot\alpha=\left(\partial_A H\right)|_{\eqref{conditionNEW}}$$
and corresponds to~\eqref{uva motion}. Even though~\eqref{uva motion} have been obtained under the constraint~\eqref{regularizations}, they hold for all $(u, v)\in \overline B^2(0,0)$ because the right hand side keeps to be continuous on all such domain. $\qquad \square$

\section{McGehee Blow--Up}

The function $H_0$ in~\eqref{H0} will be shortly denoted as
\begin{eqnarray}\label{H0simple}
{\rm H}_0({\mathcal R}, \varrho, {\mathcal S}, \sigma)=\frac{\mathcal R^2}{2}+\frac{T({\mathcal S}, \sigma)}{\varrho^2}-\frac{V(\sigma)}{\varrho}\,.
\end{eqnarray}
Recalling the definition of $\Sigma$ in~\eqref{eq:upsigma-ranges}, as $\frac{\mathcal R^2}{2}+\frac{T({\mathcal S}, \sigma)}{\varrho^2}$ is the kinetic part of the energy, then we have
\begin{lemma}\label{lem:T}
$T(\widetilde{\mathcal{S}},\sigma)$ is a positive definite quadratic form with respect to $\widetilde{\mathcal{S}}$, for all $\sigma\in \Sigma$.
\end{lemma}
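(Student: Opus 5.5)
The plan is to avoid expanding the bracket in~\eqref{H0} and instead read $T$ off the kinetic energy of the original system, through the canonical changes already built. Recall that $H$ in~\eqref{HNEW} has kinetic part $\sum_{i=1}^n |y_i|^2/(2\mu_i)$. This is a positive definite quadratic form in $y$, since every $\mu_i>0$. All the later changes act on momenta in a controlled way. The map $\phi$ in~\eqref{phi map} is linear in momenta at fixed configuration, since $y_j={\mathcal R}\upzeta_j$ and ${\mathcal R}$ is orthogonal. The missing components $\upzeta_{n-1,1},\upzeta_{n,1},\upzeta_{n,2}$ are affine in $(\Phi,\Theta,\Psi,\upeta)$ by~\eqref{etas}. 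The map~\eqref{inverse formulaeNEW} is linear in $({\mathcal S},{\mathcal R})$ at fixed $(\sigma,\varrho)$.

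So at fixed configuration $(\varrho,\sigma)$, and under~\eqref{condition}, the kinetic energy ${\mathcal R}^2/2+T({\mathcal S},\sigma)/\varrho^2$ equals $\sum|y_i|^2/(2\mu_i)$ composed with a linear map $L_{\sigma,\varrho}:({\mathcal R},{\mathcal S})\mapsto y$. Here $T$ denotes the bracket in~\eqref{H0} multiplied by $|(\sigma,e)|_\mu^2/2$. Setting ${\mathcal R}=0$ gives $T({\mathcal S},\sigma)=\varrho^2 K(L_{\sigma,\varrho}(0,{\mathcal S}))$, where $K$ is the positive definite kinetic form. Two facts follow at once. $T$ is a quadratic form in ${\mathcal S}$, because $\Psi_k^{(p)}$ is linear in ${\mathcal S}$. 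It is positive semidefinite, being a sum of squares with positive coefficients and $\sigma_{n-1,2}\neq0$ on $\Sigma$.

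It remains to prove definiteness, i.e.\ that $L_{\sigma,\varrho}(0,\cdot)$ is injective. I would argue directly on~\eqref{H0}. Suppose $T({\mathcal S},\sigma)=0$. Then each square in the bracket vanishes. In particular $\sum_{i=1}^{n-1}|{\mathcal S}_i|^2/\mu_i=0$, so ${\mathcal S}=0$. This term is present with coefficient $|(\sigma,e)|_\mu^2/(2\varrho^2)>0$ on all of $\Sigma$, since $|(\sigma,e)|_\mu\ge\sqrt{\mu_n}>0$. This already gives the claim, and it shows the extra nonnegative terms are harmless.

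The one place I expect care to be needed is the bookkeeping that the displayed $T$ really is the bracket with the factor $|(\sigma,e)|_\mu^2/2$ absorbed, and that the terms $\Psi_k^{(p)}$ depend on $({\mathcal S},\sigma)$ only. This is what lets ${\mathcal R}$ decouple. I would check that $\sum_i\mu_i\sigma_i\times\sigma_i=0$, so the ${\mathcal R}$-contribution to $\upeta_i$ in~\eqref{inverse formulaeNEW} drops out of the skew products $\upxi_j\times\upeta_j$. Given that, positivity of the diagonal term $\sum|{\mathcal S}_i|^2/\mu_i$ settles the lemma.
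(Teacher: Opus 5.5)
Your proposal is correct, but its decisive step differs from the paper's. The paper's only justification is the remark before the lemma: $\frac{\mathcal R^2}{2}+\frac{T}{\varrho^2}$ is the kinetic part of the energy, i.e.\ the pull-back of the positive definite form $\sum_i|y_i|^2/(2\mu_i)$. Your first paragraph reproduces this, and you rightly note that on its own it gives only semidefiniteness unless $L_{\sigma,\varrho}(0,\cdot)$ is injective.

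The paper's remark implicitly closes that gap structurally. Both $\phi$ and the change $(\sigma,\varrho)\leftrightarrow\upxi$ are cotangent lifts of local diffeomorphisms of configuration space: the new positions depend only on the old positions, and the Liouville forms agree. So at fixed configuration the momenta transform by an invertible linear map. Imposing $\Phi=\Theta=\Psi=0$ and $\mathcal R=0$ then just restricts a positive definite form to a linear subspace.

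You close it instead by inspecting \eqref{H0}. The bracket is $\sum_{i=1}^{n-1}|\mathcal S_i|^2/\mu_i$ plus squares of forms linear in $\mathcal S$ with positive coefficients, which are finite because $\sigma_{n-1,2}\neq0$ on $\Sigma$. The prefactor $|(\sigma,e)|_\mu^2/2$ is at least $\mu_n/2$.

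\textbf{Comparison.} The structural route does not depend on the long computation producing \eqref{H0}; yours does. In exchange, yours is insensitive to the exact positive constants there; the $\Psi$-squares in \eqref{H0} in fact carry an extra factor $\frac12$ relative to \eqref{HNEWNEW}. It also proves more than the statement: the bound $T(\mathcal S,\sigma)\ge\frac{\mu_n}{2}\sum_{i=1}^{n-1}|\mathcal S_i|^2/\mu_i$ holds uniformly on $\Sigma$. That is the kind of uniform positivity of ${\rm A}(s)$ invoked without proof in Lemma~\ref{gradient bounds}(ii).

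Your closing bookkeeping about $\mathcal R$ decoupling is not needed for the lemma, since setting $\mathcal R=0$ already isolates $T/\varrho^2$.
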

We rescale momenta and time accordingly to
$${\cal R}=\frac{\widetilde{\cal R}}{\sqrt{\varrho}}\,,\qquad {\cal S}=\widetilde{\cal S}\sqrt{\varrho}\,,\qquad \varrho^{3/2}\frac{d}{dt}=\frac{d}{d\tau}\,.$$
The motion equations become
\begin{eqnarray}\label{equations}
\left\{
\begin{array}{lll}
\displaystyle\varrho'=\varrho\widetilde{\mathcal R}
\\\\
\displaystyle\widetilde{\mathcal R}'=\frac{\widetilde{\mathcal R}^2}{2}+2{T(\widetilde{\mathcal S}, \sigma)}-{V(\sigma)}\\\\
\displaystyle\widetilde{\mathcal S}'=-\partial_{\sigma}\left(
{T(\widetilde{\mathcal S}, \sigma)}-{V(\sigma)}
\right)-\frac{\widetilde{\mathcal R}\widetilde{\mathcal S}}{2}\\\\
\displaystyle\sigma'=\partial_{\widetilde{\mathcal S}}T(\widetilde{\mathcal S}, \sigma)
\end{array}
\right.
\end{eqnarray}
From~\eqref{H0simple}, and since $T$ is homogeneous of degree 2 with respect to $\mathcal S$, the energy of the system is
\begin{eqnarray}\label{total energy}h={\rm H}_0\left(\frac{\widetilde{\displaystyle\mathcal R}}{\sqrt{\varrho}}, \varrho, \widetilde{\mathcal S}\sqrt{\varrho}, \sigma\right)
=
\frac{\displaystyle\frac{\widetilde{\mathcal R}^2}{2}+{T(\widetilde{\mathcal S}, \sigma)}-{V(\sigma)}}{{\varrho}}
\end{eqnarray}
 Equations~\eqref{uva motion} become
\begin{eqnarray}\label{uva motionNEW}
	\left\{
	\begin{array}{lll}
 u'=\frac{|(\sigma, e)|^2_\mu}{2}\left[\frac{v\widetilde\Psi^{(n-2)}_3(\widetilde{\mathcal S}, \sigma)}{\mu_{n-1}\sigma_{n-1, 2}^2}+\frac{\widetilde\Psi_2^{(n-2)}(\widetilde{\mathcal S}, \sigma)
	+\frac{\sigma_{n-1, 3}}{\sigma_{n-1, 2}}\widetilde\Psi_3^{(n-2)}(\widetilde{\mathcal S}, \sigma)
	}{\mu_n}\frac{\sigma_{n-1, 3}}{\sigma_{n-1, 2}} v-\frac{\widetilde\Psi_1^{(n-1)}(\widetilde{\mathcal S}, \sigma)}{\mu_n}\sqrt{1-u^2-v^2}\right]\\\\
 v'=\frac{|(\sigma, e)|^2_\mu}{2}\left[-\frac{u\widetilde\Psi^{(n-2)}_3(\widetilde{\mathcal S}, \sigma)}{\mu_{n-1}\sigma_{n-1, 2}^2}-
	\frac{\widetilde\Psi_2^{(n-2)}(\widetilde{\mathcal S}, \sigma)
	+\frac{\sigma_{n-1, 3}}{\sigma_{n-1, 2}}\widetilde\Psi_3^{(n-2)}(\widetilde{\mathcal S}, \sigma)
	}{\mu_n}\left(\frac{\sigma_{n-1, 3}}{\sigma_{n-1, 2}} u
	-\sqrt{1-u^2-v^2}
	\right)\right]\\\\
\alpha'=\frac{|(\sigma, e)|^2_\mu}{2}\left[-\frac{\widetilde\Psi^{(n-2)}_3(\widetilde{\mathcal S}, \sigma)}{\mu_{n-1}\sigma_{n-1, 2}^2}-\frac{\widetilde\Psi_2^{(n-2)}(\widetilde{\mathcal S}, \sigma)
	+\frac{\sigma_{n-1, 3}}{\sigma_{n-1, 2}}\widetilde\Psi_3^{(n-2)}(\widetilde{\mathcal S}, \sigma)}{\mu_n}\left(
\frac{u}{1+\sqrt{1-u^2-v^2}}+\frac{\sigma_{n-1, 3}}{\sigma_{n-1, 2}} 	\right)\right]\,.
	\end{array}
	\right.
	\end{eqnarray}

\section{Main result}\label{sec:no-inifinite-spin}

By ``collisional manifold''  we mean the sub--manifold ${\mathcal C}=\{\varrho=0\}$. The system~\eqref{equations} shows that $\mathcal C$ is invariant. We look at the equilibria of the system~\eqref{equations}  on $\mathcal C$ which are limit point of solutions. 

\vskip.1in
\noindent
The main result of this paper is
\begin{theorem}\label{main} For all orbits of the system~\eqref{equations} approaching an isolated equilibrium on $\mathcal C$  such in a way that
\begin{eqnarray}\label{non collinearityNEW}
\sup_{\tau\in[T, +\infty)}\left|\frac{1}{\sigma_{n-1, 2}(\tau)}\right|<+\infty\,,\qquad \sup_{\tau\in[T, +\infty)}\left|\frac{\sigma_{n-1, 3}(\tau)}{\sigma_{n-1, 2}(\tau)}\right|<+\infty
\end{eqnarray}
for some $T>0$,
 the triple $w(\tau)=(u(\tau), v(\tau), \alpha(\tau))$ solving
\eqref{uva motionNEW}
has a finite limit as $\tau\to +\infty$.
\end{theorem}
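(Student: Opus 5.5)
We follow the scheme of~\cite{moeckelM2025}: reduce the statement to the integrability on $[T,+\infty)$ of $|\widetilde{\mathcal S}(\tau)|$, and obtain that integrability from the center/stable manifold analysis at the limiting equilibrium.

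\textbf{Step 1: linear bound on the angular velocities.} The right hand sides of~\eqref{uva motionNEW} are linear in the functions $\widetilde\Psi^{(n-2)}_3$, $\widetilde\Psi^{(n-2)}_2$, $\widetilde\Psi^{(n-1)}_1$. By~\eqref{Psis}, each of these is bilinear in $(\sigma,\widetilde{\mathcal S})$, hence linear in $\widetilde{\mathcal S}$ with coefficients depending on $\sigma$. The remaining coefficients are $|(\sigma,e)|_\mu^2$, $\sigma_{n-1,2}^{-2}$, $\sigma_{n-1,3}/\sigma_{n-1,2}$, $\sqrt{1-u^2-v^2}\le1$ and $u/(1+\sqrt{1-u^2-v^2})$, which is bounded by $1$. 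Along an orbit converging to an equilibrium on $\mathcal C$, $\sigma(\tau)$ converges, so it is bounded. Together with~\eqref{non collinearityNEW} this gives a constant $C$ with
$$|w'(\tau)|\le C\,|\widetilde{\mathcal S}(\tau)|\qquad \tau\ge T .$$
It therefore suffices to prove $\int_T^{+\infty}|\widetilde{\mathcal S}(\tau)|\,d\tau<+\infty$. Then $w(\tau)$ is Cauchy as $\tau\to+\infty$ and has a finite limit.

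\textbf{Step 2: structure near the equilibrium.} Equilibria of~\eqref{equations} on $\mathcal C$ satisfy $\sigma'=\partial_{\widetilde{\mathcal S}}T=0$. By Lemma~\ref{lem:T}, positive definiteness forces $\widetilde{\mathcal S}=0$. The remaining equations then say that $\sigma_0$ is a critical point of $V$ (a central configuration) and that $\widetilde{\mathcal R}_0=-\sqrt{2V(\sigma_0)}$ for collision orbits. Energy~\eqref{total energy} forces $\frac{\widetilde{\mathcal R}^2}{2}+T-V=h\varrho\to0$. We linearize at $(\varrho,\widetilde{\mathcal R},\widetilde{\mathcal S},\sigma)=(0,\widetilde{\mathcal R}_0,0,\sigma_0)$. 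The $(\widetilde{\mathcal S},\sigma)$ block has eigenvalues $\lambda$ solving $\lambda^2+\frac{\widetilde{\mathcal R}_0}{2}\lambda-\kappa=0$, with $\kappa$ ranging over the eigenvalues of the Hessian of $-V$ relative to the metric of $T$. Since $\widetilde{\mathcal R}_0<0$, each eigenvalue pair has at least one root with positive real part (unstable), and zero eigenvalues occur exactly for degenerate directions of $V$. Isolation of the equilibrium is the hypothesis that replaces nondegeneracy.

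\textbf{Step 3: center manifold and {\L}ojasiewicz argument (main obstacle).} The orbit converges to the equilibrium, so it lies in the center-stable manifold. By the standard asymptotic phase argument it is exponentially shadowed by an orbit on a center manifold $W^c$, so exponentially decaying contributions to $\widetilde{\mathcal S}$ are trivially integrable. On $W^c$, $\widetilde{\mathcal S}$ is a $C^1$ graph over $\sigma$ vanishing quadratically. The reduced dynamics is then expected to be a gradient-like flow for $V$, with $\widetilde{\mathcal R}$ acting as a Lyapunov function via $\widetilde{\mathcal R}'=T+h\varrho\ge0$ (using the energy relation).

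Following~\cite{moeckelM2025}, I would combine this with a {\L}ojasiewicz-type inequality. $V$ is real analytic, so $|V(\sigma)-V(\sigma_0)|^{1-\beta}\le c|\nabla V(\sigma)|$ near $\sigma_0$. The aim is to bound $|\widetilde{\mathcal S}|\lesssim -\frac{d}{d\tau}\big(\widetilde{\mathcal R}_\infty-\widetilde{\mathcal R}\big)^{\beta}$, which integrates to a finite quantity. The spatial setting adds the extra reduced variables $\sigma_{n-1,\cdot}$ and the singular terms $\sigma_{n-1,2}^{-2}$ in $T$; these are harmless precisely under~\eqref{non collinearityNEW}, which keeps the orbit in a compact region where $T$ is analytic and uniformly positive.

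The delicate point is verifying that the exponential shadowing preserves the integrability of $|\widetilde{\mathcal S}|$ and that the {\L}ojasiewicz estimate applies on the center manifold. There I would closely mirror the planar argument, replacing the single angle by the triple $w$.
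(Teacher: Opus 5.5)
Your Step 1 is the paper's proof of Theorem~\ref{main}. Linearity of the $\widetilde{\Psi}$'s in $\widetilde{\mathcal S}$, boundedness of $\sigma$ and \eqref{non collinearityNEW} give $|w'|\le K|\widetilde{\mathcal S}|$, which reduces everything to $\int^{+\infty}|\widetilde{\mathcal S}|\,d\tau<+\infty$, i.e.\ to Lemma~\ref{finite Sint}.

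The gap is that Step~3, which is the whole substance of that lemma, is only announced. You say the flow on the center manifold is ``expected'' to be gradient-like, and you state $|\widetilde{\mathcal S}|\le -C\frac{d}{d\tau}(\widetilde{\mathcal R}_\infty-\widetilde{\mathcal R})^\beta$ only as an ``aim''. On an orbit where $\widetilde{\mathcal R}'=T\ge m|\widetilde{\mathcal S}|^2$, that bound is equivalent to $(\widetilde{\mathcal R}_\infty-\widetilde{\mathcal R})^{1-\beta}\le C|\widetilde{\mathcal S}|$. Through the energy relation and {\L}ojasiewicz, this reduces to $|\partial_\sigma V(\sigma)|\le C|\widetilde{\mathcal S}|$ along the center-manifold orbit.

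That inequality is the missing idea; it is the content of Lemma~\ref{gradient bounds}(i). It comes from the $\widetilde{\mathcal S}$-equation restricted to the graph $\widetilde{\mathcal S}=\Phi(x)$. There $\widetilde{\mathcal S}'=D\Phi(x)\,x'$ with $x'$ linear in $\widetilde{\mathcal S}$ and $D\Phi(0)=0$, so $\widetilde{\mathcal S}'=o(|\widetilde{\mathcal S}|)$. Since also $\partial_\sigma T=O(|\widetilde{\mathcal S}|^2)$, you get $\partial_\sigma V=\frac{\widetilde{\mathcal R}}{2}\widetilde{\mathcal S}+o(|\widetilde{\mathcal S}|)$: the reduced flow is a perturbed gradient descent for $V$.

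A second point matters too. As written, $\widetilde{\mathcal R}'=T+h\varrho\ge0$ fails along the true orbit when $h<0$: on a homothetic collision orbit $T\equiv0$ and $\widetilde{\mathcal R}'=h\varrho<0$. So $\widetilde{\mathcal R}_\infty-\widetilde{\mathcal R}$ can be negative and your Lyapunov function is undefined. The argument must be run on a shadowing orbit chosen inside the invariant set $\{\varrho=0\}\cap\{E=0\}$, which is what Proposition~\ref{flow on center manifold} constructs. There $\widetilde{\mathcal R}'=T\ge0$, and $E=0$ gives $0\le\widetilde{\mathcal R}_\star-\widetilde{\mathcal R}\le 2\big(V(\sigma)-V(\sigma_\star)\big)/|\widetilde{\mathcal R}_\star|$, with $\widetilde{\mathcal R}_\star=\widetilde{\mathcal R}_\infty$. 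Integrability then passes to the true orbit by the exponential shadowing.

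Once these two points are supplied, your route closes more directly than the paper's. The paper uses $V\circ g$ as Lyapunov function and also needs Lemma~\ref{gradient bounds}(ii). It then derives the decay of $W$ in Lemma~\ref{W} and sums Cauchy--Schwarz estimates over dyadic intervals. Your finite-length {\L}ojasiewicz argument with $\widetilde{\mathcal R}$ gives $\int_{\tau_0}^{+\infty}|\widetilde{\mathcal S}|\,d\tau\le C\big(\widetilde{\mathcal R}_\star-\widetilde{\mathcal R}(\tau_0)\big)^\beta$ in one step. It uses only the one-sided bound $|\partial_\sigma V|\le C|\widetilde{\mathcal S}|$.
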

Observe that conditions~\eqref{non collinearityNEW} correspond to~\eqref{non collinearity}, due to the following equalities
\begin{eqnarray*}
\frac{\sigma_{n-1, 3}(\tau)}{\sigma_{n-1, 2}(\tau)}=\frac{\upxi_{n-1, 3}(t)}{\upxi_{n-1, 2}(t)}=\frac{x_{n-1}(t)\cdot f_3(t)}{x_{n-1}(t)\cdot f_2(t)}=
\frac{x_{n-1}(t)\cdot \frac{x_n(t)}{|x_n(t)|}}{x_{n-1}(t)\cdot \frac{x_n(t)}{|x_n(t)|}\times \frac{x_n(t)\times x_{n-1}(t)}{|x_n(t)\times x_{n-1}(t)|}}=-\frac{x_n(t)\cdot x_{n-1}(t)}{|x_n(t)\times x_{n-1}(t)|}
\end{eqnarray*}
and, similarly,
\begin{eqnarray*}
\frac{1}{\sigma_{n-1, 2}(\tau)}=\frac{\upxi_{n, 3}(t)}{\upxi_{n-1, 2}(t)}=\frac{x_{n}(t)\cdot f_3(t)}{x_{n-1}(t)\cdot f_2(t)}=
\frac{x_{n}(t)\cdot \frac{x_n(t)}{|x_n(t)|}}{x_{n-1}(t)\cdot \frac{x_n(t)}{|x_n(t)|}\times \frac{x_n(t)\times x_{n-1}(t)}{|x_n(t)\times x_{n-1}(t)|}}=-\frac{|x_n(t)|^2}{|x_n(t)\times x_{n-1}(t)|}
\end{eqnarray*}
The proof of Theorem~\ref{main} follows from
the following lemma, which we shall prove later on.
\begin{lemma}\label{finite Sint}
For all orbits of the system~\eqref{equations} approaching an isolated equilibrium on $\mathcal C$, one has
\begin{eqnarray}\label{eq: finite Sint}\int_0^{+\infty}|\widetilde{\mathcal S}(\tau)|<+\infty\,.
\end{eqnarray}\end{lemma}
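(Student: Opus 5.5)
We prove Lemma~\ref{finite Sint} by adapting the center--manifold argument of~\cite{moeckelM2025} to the blown--up system~\eqref{equations}, restricted to the collision--energy manifold. Using~\eqref{total energy}, along an orbit of energy $h$ we have $\frac{\widetilde{\mathcal R}^2}{2}+T(\widetilde{\mathcal S},\sigma)-V(\sigma)=h\varrho$. Since the orbit tends to $\mathcal C$, we get $\varrho\to0$ and this relation passes to the limit.

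\emph{Step 1: the equilibria.} Equilibria on $\mathcal C$ satisfy $\varrho=0$, $\widetilde{\mathcal S}=0$ (since $\sigma'=\partial_{\widetilde{\mathcal S}}T=0$ and $T$ is positive definite by Lemma~\ref{lem:T}), $\partial_\sigma V(\sigma_0)=0$, and $\widetilde{\mathcal R}_0^2=2V(\sigma_0)$. Orbits tending to collision have $\varrho'=\varrho\widetilde{\mathcal R}$ with $\varrho\to0$, so $\widetilde{\mathcal R}_0=-\sqrt{2V(\sigma_0)}<0$. Thus $\sigma_0$ is a critical point of $V$, i.e.\ a reduced central configuration, and isolation of the equilibrium means $\sigma_0$ is an isolated critical point of $V$.

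\emph{Step 2: linearization.} Linearize~\eqref{equations} at $(0,\widetilde{\mathcal R}_0,0,\sigma_0)$. The $\varrho$ direction has eigenvalue $\widetilde{\mathcal R}_0<0$, and the $\widetilde{\mathcal R}$ direction is slaved by the energy relation. For the $(\widetilde{\mathcal S},\sigma)$ block, write $T=\frac12\widetilde{\mathcal S}^{\top}K(\sigma)\widetilde{\mathcal S}$. The block is $\delta\sigma'=K\delta\widetilde{\mathcal S}$, $\delta\widetilde{\mathcal S}'=D^2V(\sigma_0)\delta\sigma-\frac{\widetilde{\mathcal R}_0}{2}\delta\widetilde{\mathcal S}$. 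Its eigenvalues solve $\lambda^2+\frac{\widetilde{\mathcal R}_0}{2}\lambda-k=0$, where $k$ ranges over the eigenvalues of $KD^2V$. Since $\widetilde{\mathcal R}_0\neq0$, the only way to get $\mathrm{Re}\,\lambda=0$ is $k=0$, $\lambda=0$. Hence the center directions are exactly $\ker D^2V(\sigma_0)$ in $\sigma$ with $\delta\widetilde{\mathcal S}=0$, and all other eigenvalues are real and nonzero.

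\emph{Step 3: center--stable reduction.} The orbit converges to the equilibrium, so it lies on the center--stable manifold. Its center component follows a trajectory on a center manifold, up to an exponentially small error (shadowing / asymptotic phase). On the center manifold, $\widetilde{\mathcal S}=h(\sigma_c)$ with $h=O(|\sigma_c-\sigma_0|^2)$, and the reduced flow is a gradient--like flow. The key structural fact is the Lyapunov function $\widetilde{\mathcal R}$: from~\eqref{equations} and the energy relation, $\widetilde{\mathcal R}'=T(\widetilde{\mathcal S},\sigma)+h\varrho$. Its leading part is $T\ge c|\widetilde{\mathcal S}|^2$, while $h\varrho$ is exponentially small and integrable because $\varrho'\le\frac{\widetilde{\mathcal R}_0}{2}\varrho$ eventually. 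So $\widetilde{\mathcal R}$ is monotone modulo an integrable error, and $\int|\widetilde{\mathcal S}|^2<\infty$.

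\emph{Step 4: from $L^2$ to $L^1$ (the main obstacle).} Integrability of $|\widetilde{\mathcal S}|^2$ does not by itself give~\eqref{eq: finite Sint}. The plan is a {\L}ojasiewicz-type argument. The potential $V$ is real analytic near $\sigma_0$, so $|V(\sigma)-V(\sigma_0)|^{1-\beta}\le C|\partial_\sigma V(\sigma)|$ for some $\beta\in(0,\frac12]$. We use the Lyapunov quantity $L=\widetilde{\mathcal R}_0-\widetilde{\mathcal R}$ (equivalently $V$ corrected by $\widetilde{\mathcal R}\widetilde{\mathcal S}$ terms) and show $-\frac{d}{d\tau}L^\beta\ge c|\widetilde{\mathcal S}|$ up to integrable error. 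This requires proving $|\widetilde{\mathcal S}|\gtrsim|\partial_\sigma V|$ along the orbit, modulo exponentially decaying terms. That comparison comes from the $\widetilde{\mathcal S}'$ equation: its damping term $-\frac{\widetilde{\mathcal R}_0}{2}\widetilde{\mathcal S}>0$ forces $\widetilde{\mathcal S}\approx\frac{2}{|\widetilde{\mathcal R}_0|}\big(\partial_\sigma V+O(|\widetilde{\mathcal S}|^2)\big)$ on the slow manifold. This is the same mechanism as in~\cite{moeckelM2025}. The exponential corrections from the hyperbolic directions contribute a finite integral, and together these give~\eqref{eq: finite Sint}.

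Given the lemma, Theorem~\ref{main} follows by bounding the right--hand sides of~\eqref{uva motionNEW}. Each is linear in the functions $\widetilde\Psi_k$, which are bilinear in $(\widetilde{\mathcal S},\sigma)$. The coefficients are bounded by~\eqref{non collinearityNEW}, by $|u|,|v|\le1$, and by the boundedness of $\sigma$ near $\sigma_0$; note also $\frac{|u|}{1+\sqrt{1-u^2-v^2}}\le1$. Hence $|w'|\le C|\widetilde{\mathcal S}|$, which is integrable, so $w(\tau)$ converges.
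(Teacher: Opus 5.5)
Your proposal is correct. It shares the paper's skeleton: the hyperbolic case is trivial, the orbit is exponentially shadowed by an orbit on a center manifold (Proposition~\ref{flow on center manifold}), $\widetilde{\mathcal S}$ is slaved to $\partial_\sigma V$ on that manifold, and Lojasiewicz applies to the analytic $V$. The last two steps, however, are organized differently.

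The paper works on the shadowing orbit inside $\{\varrho=0,\ E=0\}$ and takes $V\circ g$ as Lyapunov function. This needs the two-sided comparison $|\widetilde{\mathcal S}|\sim|\partial V|$ and the sign-sensitive expansion behind $\frac{d}{d\tau}V(g(x))\le -c|\partial V|^2+o(|\partial V|^2)$ (Lemma~\ref{gradient bounds}). The paper then turns Lojasiewicz into the algebraic decay $W(\tau)\lesssim\tau^{-1/(\alpha-1)}$ (Lemma~\ref{W}) and sums Cauchy--Schwarz bounds over dyadic blocks.

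You instead use the exact identity $\widetilde{\mathcal R}'=T(\widetilde{\mathcal S},\sigma)+h\varrho$, valid on the whole blown-up space. It gives $\int|\widetilde{\mathcal S}|^2<\infty$ for free and a Lyapunov property that needs no expansion. On $\{\varrho=0,\ E=0\}$ one has $\widetilde{\mathcal R}=-\sqrt{2(V-T)}$, a monotone function of the paper's $V\circ g$ corrected by $T=O(|\partial_\sigma V|^2)$. From the center manifold you only need the one-sided bound $|\partial_\sigma V|\lesssim|\widetilde{\mathcal S}|$. This is immediate from the $\widetilde{\mathcal S}'$ equation, because the graph has zero derivative at the equilibrium and $|\sigma'|\lesssim|\widetilde{\mathcal S}|$, so $\widetilde{\mathcal S}'=o(|\widetilde{\mathcal S}|)$ along it. The classical estimate $-(L^\beta)'\gtrsim|\widetilde{\mathcal S}|$ then gives integrability of $|\widetilde{\mathcal S}|$ in one stroke, with no decay rate and no dyadic sum.

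The price is the bookkeeping you summarize as ``up to integrable error''. Here is a clean way to carry it out:
\begin{itemize}
\item Replace $L$, which can be negative when $h<0$, by $\mathcal L(\tau):=\int_\tau^{\infty}T\,ds=\widetilde{\mathcal R}_0-\widetilde{\mathcal R}(\tau)-h\int_\tau^{\infty}\varrho\,ds$. Then $\mathcal L\ge0$ and $\mathcal L'=-T$.
\item Since $\widetilde{\mathcal R}=-\sqrt{2(V-T+h\varrho)}$, one gets $\mathcal L\lesssim|V(\sigma)-V(\sigma_0)|+|\widetilde{\mathcal S}|^2+\varrho$.
\item Lojasiewicz, the shadowing bound $|\partial_\sigma V(\sigma(\tau))|\lesssim|\widetilde{\mathcal S}(\tau)|+e^{-\kappa\tau}$, and $2(1-\beta)\ge1$ then give $\mathcal L^{1-\beta}\lesssim|\widetilde{\mathcal S}|+e^{-\kappa\tau}$.
\item On $\{|\widetilde{\mathcal S}|\ge e^{-\kappa\tau}\}$ this yields $-(\mathcal L^\beta)'\gtrsim|\widetilde{\mathcal S}|$. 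The remaining times contribute at most $\int e^{-\kappa\tau}\,d\tau$.
\end{itemize}

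Two slips do not affect the argument. First, the slaving relation is $\widetilde{\mathcal S}\approx-\frac{2}{|\widetilde{\mathcal R}_0|}\partial_\sigma V$. The term $-\frac{\widetilde{\mathcal R}_0}{2}\widetilde{\mathcal S}$ is anti-damping, not damping: the partner of each zero eigenvalue is $-\widetilde{\mathcal R}_0/2>0$. Second, the hyperbolic eigenvalues need not be real; they are complex when $k<-\widetilde{\mathcal R}_0^2/16$, and only their real parts are guaranteed nonzero.
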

Here we prove how Theorem~\ref{main} follows from Lemma~\ref{finite Sint}.
\proof  {\bf of Theorem~\ref{main}} We write the triple $w=(u, v, \alpha)$ as
\begin{eqnarray*}
w(\tau)=w(0)+\int_0^\tau w'(s)ds\,,
\end{eqnarray*}
so
\begin{eqnarray*}
\lim_{\tau\to+\infty}w(\tau)=w(0)+\int_0^{+\infty} w'(\tau)d\tau\,,
\end{eqnarray*}
and we only need to check that the integrals at right hand side converge.
As the functions $\Psi^{(k)}_j(\widetilde{\mathcal S}, \sigma)$  at right hand side of 
\eqref{uva motionNEW} are linear with respect to $\widetilde{\mathcal S}$, we have a bound
\begin{eqnarray}\label{use non collinearity}
\max\left\{ |u'(\tau)|\,,\  |v'(\tau)|\,,\  |\alpha'(\tau)\right\}\le K|\widetilde{\mathcal S}(\tau))|
\end{eqnarray}
for some $K>0$, which
implies the convergence of the target integrals, by Lemma~\ref{finite Sint}.  
Note that bound in~\eqref{use non collinearity} uses  conditions in~\eqref{non collinearityNEW}, in order to have a control on the ratios $\frac{\sigma_{n-1, 3}(t)}{\sigma_{n-1, 2}(t)}$, $\frac{1}{\sigma_{n-1, 2}(t)}$ appearing  at tight hand side of~\eqref{uva motionNEW}.

$\qquad \square$

\section{Proof of Lemma~\ref{finite Sint}}
The proof of Lemma~\ref{finite Sint} is obtained adapting the ideas and techniques of~\cite{moeckelM2025} to our Hamiltonian analysis. This is what we discuss here.
\vskip.1in
\noindent

\begin{proposition}
The equilibria of the system~\eqref{equations} on $\mathcal C$
are given by  $(\varrho, \widetilde{\mathcal R}, \widetilde{\mathcal S}, \sigma)=(0, \widetilde{\mathcal R}_\star, 0, \sigma_\star)$
where 
\begin{eqnarray}\label{equilibrium}
\sigma_\star:\ \partial_\sigma V(\sigma_\star)=0\,,\qquad
\widetilde{\mathcal R}_\star=-\sqrt{V(\sigma_\star)}\,.
\end{eqnarray}
\end{proposition}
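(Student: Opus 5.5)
To prove the proposition, I will set $\varrho=0$ in \eqref{equations} and solve for the zeros of the right hand sides of the remaining equations. The first equation, $\varrho'=\varrho\widetilde{\mathcal R}$, vanishes identically on $\mathcal C$, so it gives no condition. The equilibria on $\mathcal C$ are therefore the points where the $\widetilde{\mathcal R}$, $\widetilde{\mathcal S}$ and $\sigma$ components of the vector field vanish, and I will treat these three equations in turn.

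\emph{Step 1: the $\sigma$--equation.} Since $T$ is homogeneous of degree 2 in $\widetilde{\mathcal S}$ (Lemma~\ref{lem:T}), we can write $T(\widetilde{\mathcal S},\sigma)=\frac12(A(\sigma)\widetilde{\mathcal S},\widetilde{\mathcal S})$ with $A(\sigma)$ symmetric and positive definite for $\sigma\in\Sigma$. Then $\sigma'=A(\sigma)\widetilde{\mathcal S}$, and $\sigma'=0$ forces $\widetilde{\mathcal S}=0$ by invertibility of $A(\sigma)$.

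\emph{Step 2: the $\widetilde{\mathcal S}$--equation.} With $\widetilde{\mathcal S}=0$, the term $\partial_\sigma T(\widetilde{\mathcal S},\sigma)$ is quadratic in $\widetilde{\mathcal S}$ and hence vanishes, as does the term $\widetilde{\mathcal R}\widetilde{\mathcal S}/2$. So $\widetilde{\mathcal S}'=\partial_\sigma V(\sigma)$, and the equilibrium condition becomes $\partial_\sigma V(\sigma_\star)=0$. This is the first relation in \eqref{equilibrium}.

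\emph{Step 3: the $\widetilde{\mathcal R}$--equation.} With $\widetilde{\mathcal S}=0$, the condition is $\widetilde{\mathcal R}^2/2=V(\sigma_\star)$, that is, $\widetilde{\mathcal R}=\pm\sqrt{2V(\sigma_\star)}$ (note $V>0$). The normalization stated in \eqref{equilibrium} should be checked against the exact coefficients in \eqref{equations}; I expect this to reduce to a bookkeeping issue of constants.

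The only genuinely delicate point is the choice of sign. Both signs give equilibria of the vector field, so the statement has to be read as describing equilibria that are limit points of collision orbits, as the paper specifies right before the statement. Collision orbits have $\varrho\to0$ along $\tau\to+\infty$. Since $\varrho'=\varrho\widetilde{\mathcal R}$, the sign of $\widetilde{\mathcal R}$ must eventually be negative, and the limit equilibrium therefore has $\widetilde{\mathcal R}_\star<0$; this selects the minus sign. Conversely, every pair satisfying the displayed relations is an equilibrium, which I will verify by direct substitution into \eqref{equations}.
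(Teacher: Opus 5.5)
Your argument is correct, but it reaches $\widetilde{\mathcal S}_\star=0$ by a different step than the paper.

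\emph{The paper's route.} It does not use the $\sigma$--equation at all. It invokes energy conservation, $E=h\varrho\to0$ along the collision orbit, so the limit satisfies $\widetilde{\mathcal R}_\star^2/2+T-V=0$. Subtracting this from $\widetilde{\mathcal R}'=0$ gives $T(\widetilde{\mathcal S}_\star,\sigma_\star)=0$, hence $\widetilde{\mathcal S}_\star=0$ by positive definiteness. The condition $\partial_\sigma V(\sigma_\star)=0$ is left implicit.

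\emph{Your route.} You read $\widetilde{\mathcal S}=0$ directly off $\sigma'=A(\sigma)\widetilde{\mathcal S}=0$, and then obtain $\partial_\sigma V(\sigma_\star)=0$ explicitly from the $\widetilde{\mathcal S}$--equation. This is purely algebraic on the vector field. It classifies all zeros on $\mathcal C$ at once and shows, as a byproduct, that they automatically lie on $\{E=0\}$. You use the dynamics (the fact that $\varrho\to0$) only to fix the sign, exactly as the paper does. The paper's route, in exchange, ties the equilibria from the start to the invariant set $\{E=0\}$, which it exploits later in the center--manifold argument.

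\emph{The constant.} You should not have deferred the constant as a bookkeeping issue: your value $\widetilde{\mathcal R}_\star=-\sqrt{2V(\sigma_\star)}$ is the correct one, and the $-\sqrt{V(\sigma_\star)}$ in the statement is a slip. The paper's own two displayed equations give $\widetilde{\mathcal R}_\star^2/2=V(\sigma_\star)$ once $T=0$ is known. Later the paper writes $\widetilde{\mathcal R}_0=-\sqrt{2(V-T)}$ and uses $\sqrt{2{\rm V}(0)}$, which agrees with your value.

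The converse you promise to check by substitution would in fact fail for the literal statement. With $\widetilde{\mathcal S}=0$ and $\widetilde{\mathcal R}=-\sqrt{V(\sigma_\star)}$, the $\widetilde{\mathcal R}$--equation in \eqref{equations} gives $\widetilde{\mathcal R}'=-V(\sigma_\star)/2\neq0$. The proposition should be stated and proved with $\widetilde{\mathcal R}_\star=-\sqrt{2V(\sigma_\star)}$, and you should say so outright.
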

\proof
Let $(\varrho_\star, \widetilde{\mathcal R}_\star, \widetilde{\mathcal S}_\star, \sigma_\star)$ be an equilibrium on $\mathcal C$. As  $\varrho(\tau)\to \varrho_\star=0$ as $\tau\to+\infty$, then $\widetilde{\mathcal R}_\star<0$.
From the conservation of the energy~\eqref{total energy} and the equation for $\mathcal R'$ in~\eqref{equations}, we have
$$\left\{\begin{array}{lll}
\displaystyle\frac{\widetilde{\mathcal R}_\star^2}{2}+{T(\widetilde{\mathcal S}_\star, \sigma_\star)}-{V(\sigma_\star)}=0\\\\
\displaystyle\frac{\widetilde{\mathcal R}_\star^2}{2}+2{T(\widetilde{\mathcal S}_\star, \sigma_\star)}-{V(\sigma_\star)}=0
\end{array}
\right.$$
which gives, as $\widetilde{\mathcal R}_\star<0$,
$$\widetilde{\mathcal R}_\star=-\sqrt{V(\sigma_\star)}\,,\qquad T(\widetilde{\mathcal S}_\star, \sigma_\star)=0\,.$$
As $T(\widetilde{\mathcal S}_\star, \sigma_\star)$ is a positive definite quadratic form with respect to $\widetilde{\mathcal S}_\star$ for all $\sigma_\star\in \Sigma$ (Lemma~\ref{lem:T}), we have $\widetilde{\mathcal S}_\star=0$. $\qquad \square$
 
 \vskip.1in
 \noindent
We introduce the displacements coordinates from the equilibrium
\begin{eqnarray}\label{shift}\widehat{\mathcal R}=\widetilde{\mathcal R}-\widetilde{\mathcal R}_\star\,,\qquad \widehat\varrho=\varrho\,,\qquad \widehat{\mathcal S}=\widetilde{\mathcal S}\,,\qquad \widehat\sigma=\sigma-\sigma_\star\end{eqnarray}
and consider the linearized system, given by
\begin{eqnarray}\label{linearized system}\left\{
\begin{array}{lcr}
\widehat\varrho'=\widetilde{\mathcal R}_\star \widehat\varrho\\\\
\widehat{\mathcal R}'=\widetilde{\mathcal R}_\star \widehat{\mathcal R}\\\\
\widehat{\mathcal S}'=-\frac{\widetilde{\mathcal R}_\star }{2}\widehat{\mathcal S}+B\widehat\sigma\\\\
\widehat\sigma'=A \widehat{\mathcal S}
\end{array}
\right.
\end{eqnarray}
where
\begin{eqnarray}\label{AB}A:=\partial^2_{\widetilde{\mathcal S}} T(0, \sigma_\star)\,,\qquad B:=\partial^2_\sigma V(\sigma_\star)\end{eqnarray}
In deriving equations~\eqref{linearized system}  we have used
$$\partial_{\sigma}V(\sigma_\star)=\partial_{\widetilde{\mathcal S}} T(0, \sigma_\star)=\partial_{\sigma} T(0, \sigma_\star)=\partial^2_{\widetilde{\mathcal S}\sigma} T(0, \sigma_\star)=\partial^2_{\sigma} T(0, \sigma_\star)=0\,.$$
The matrices $A$, $B$ are symmetric, with $A$  positively definite { as the consequence of positive definiteness of $T$}. Let $\alpha$ be the unique symmetric matrix { positive definite} such that $\alpha^2=A$ and let $C\in \rm SO(3n-3)$ be such that 
\begin{eqnarray}\label{B}D:=C^{-1}\alpha B \alpha C
\end{eqnarray} 
is diagonal. Note that $\alpha B \alpha$ is Hermitian, so $C$ is  well  defined 
Observe that the matrix $B$ is singular if and only if $D$ has some zero entry on its principal diagonal.
We further change
\begin{eqnarray}\label{changeNEW}
\widehat{\mathcal S}=\alpha^{-1}Cw\,,\qquad 
\widehat\sigma=\alpha C s\,.\end{eqnarray}
\begin{lemma}\label{diagonalization}
	If the matrix $B$ is non--singular, $0$ is an hyperbolic equilibrium for the system~\eqref{linearized system}. If $B$ is singular,
	there is a non--trivial center space associated to it, described by
\begin{eqnarray}\label{center space}{\mathbb E}^{\rm c}=\big\{
	(\widehat{\mathcal R}, \widehat\varrho, w, s)\in \{0\}\times\{0\}\times\{0\}\times Ker(D)
	\}\,.\end{eqnarray}
	\end{lemma}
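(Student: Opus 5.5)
The plan is to rewrite the linearized system~\eqref{linearized system} in the variables~\eqref{changeNEW}. In these variables it should split into scalar $2\times 2$ blocks, one per diagonal entry of $D$, plus the two scalar equations for $\widehat\varrho$ and $\widehat{\mathcal R}$. The spectrum can then be read off block by block.

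\emph{Step 1: change of variables.} Since $\alpha$ is symmetric positive definite and $C$ is orthogonal, the change~\eqref{changeNEW} is invertible and linear, so it preserves eigenvalues and maps invariant subspaces to invariant subspaces. Using $\alpha^{-1}A\alpha^{-1}=I$ and $C^{-1}=C^{T}$, I expect to obtain
\begin{eqnarray*}
s'=C^{-1}\alpha^{-1}A\alpha^{-1}C\,w=w\,,\qquad w'=-\frac{\widetilde{\mathcal R}_\star}{2}w+C^{-1}\alpha B\alpha C\,s=-\frac{\widetilde{\mathcal R}_\star}{2}w+Ds\,.
\end{eqnarray*}
The equations $\widehat\varrho'=\widetilde{\mathcal R}_\star\widehat\varrho$ and $\widehat{\mathcal R}'=\widetilde{\mathcal R}_\star\widehat{\mathcal R}$ are unchanged. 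Write $D={\rm diag}(d_1,\ldots,d_{3n-3})$. The system then decouples into the planar systems $s_k'=w_k$, $w_k'=d_ks_k-\frac{\widetilde{\mathcal R}_\star}{2}w_k$.

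\emph{Step 2: spectrum.} The $\widehat\varrho$ and $\widehat{\mathcal R}$ directions have eigenvalue $\widetilde{\mathcal R}_\star=-\sqrt{V(\sigma_\star)}$. This is strictly negative because $V>0$: it is a sum of positive mutual-distance terms times $|(\sigma,e)|_\mu>0$. The $k$-th block has characteristic polynomial $\lambda^2+\frac{\widetilde{\mathcal R}_\star}{2}\lambda-d_k$. Suppose $\lambda=i\omega$ with $\omega\in\mathbb R$ is a root. Taking the imaginary part gives $\frac{\widetilde{\mathcal R}_\star}{2}\omega=0$, hence $\omega=0$ because $\widetilde{\mathcal R}_\star\ne0$. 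Taking the real part then gives $d_k=0$. So an eigenvalue on the imaginary axis occurs if and only if some $d_k=0$, and in that case it is $\lambda=0$.

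\emph{Step 3: link with $B$.} Since $\alpha$ and $C$ are invertible, $\det D=\det(\alpha)^2\det B$. Hence $B$ is non-singular exactly when all $d_k\ne0$, and then the equilibrium is hyperbolic.

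\emph{Step 4: center space when $B$ is singular.} For each $k$ with $d_k=0$, the block is $\left(\begin{array}{cc}0&1\\0&-\widetilde{\mathcal R}_\star/2\end{array}\right)$. Its eigenvalues $0$ and $-\widetilde{\mathcal R}_\star/2>0$ are distinct, so the eigenvalue $0$ is semisimple. Its eigenvector is $(s_k,w_k)=(1,0)$. Collecting these directions over all such $k$, and noting that the $\widehat\varrho$ and $\widehat{\mathcal R}$ directions and the blocks with $d_k\ne0$ contribute nothing to the center space, yields exactly~\eqref{center space}, namely $\widehat{\mathcal R}=\widehat\varrho=0$, $w=0$, $s\in Ker(D)$.

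The argument is elementary throughout. The only point needing care is Step 2, where the strict sign $\widetilde{\mathcal R}_\star<0$ is what prevents nonzero purely imaginary eigenvalues. I would make that sign explicit from the earlier proposition characterizing the equilibria.
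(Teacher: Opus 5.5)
Your proposal is correct and follows the paper's own route: the change of variables \eqref{changeNEW} reduces the linearization to \eqref{linearized systemNEW}, which decouples into $2\times 2$ blocks indexed by the diagonal entries of $D$, and the center space is read off the zero eigenvalues of the blocks with $d_k=0$. Your version is somewhat tidier than the paper's. You make explicit three points the paper only asserts: that $\widetilde{\mathcal R}_\star<0$ is what excludes nonzero purely imaginary eigenvalues, that the zero eigenvalue is semisimple, and that $\det D=(\det\alpha)^2\det B$. Moreover, your characteristic polynomial $\lambda^2+\frac{\widetilde{\mathcal R}_\star}{2}\lambda-d_k$ is the correct one, whereas the paper's second-order equation carries a harmless spurious factor $\frac12$ on $c_j$.
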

\begin{remark}\rm
	Turning back to the coordinates $(\widehat{\mathcal R},\widehat\varrho, \widehat{\mathcal S}, \widehat\sigma)$ via~\eqref{changeNEW} and~\eqref{B}, the center space is 
	\begin{eqnarray}\label{center space OLD coordinates}{\mathbb E}^{\rm c}=\big\{
	(\widehat{\mathcal R}, \widehat\varrho, \widehat{\mathcal S}, \widehat\sigma)\in \{0\}\times\{0\}\times\{0\}\times Ker(B)
	\}\,.\end{eqnarray}
	\end{remark}
\proof 
The change~\eqref{changeNEW} transforms the system~\eqref{linearized system}
into
\begin{eqnarray}\label{linearized systemNEW}\left\{
\begin{array}{lcr}
\widehat\varrho'=\widetilde{\mathcal R}_\star \widehat\varrho\\\\
\widehat{\mathcal R}'=\widetilde{\mathcal R}_\star \widehat{\mathcal R}\\\\
w'=-\frac{\widetilde{\mathcal R}_\star }{2}w+D s\\\\
s'=w
\end{array}
\right.
\end{eqnarray}
The two latter equations give

$$w_j''+\frac{\widetilde{\mathcal R}_\star }{2}w_j'-\frac{c_j}{2}w_j=0\qquad j=1\,,\ldots\,,3n-3\,.$$
where $c_j$ is the $j^{\rm th}$ entry on the diagonal of $D$.
 The solutions are
\begin{eqnarray}\label{solution}w_j(t)=\left\{
\begin{array}{lll}w_j^+ e^{\lambda^+_j t}+w_j^- e^{\lambda^-_j t}\quad &{\rm if}\ 
{\widetilde{\mathcal R}_\star ^2}+8 c_j\ne0\\
w_j^+ e^{\lambda^+_j t}+w_j^- t e^{\lambda^-_j t} &{\rm otherwise}
\end{array}
\right.\end{eqnarray}
where $w_j^\pm\in \mathbb C$ and $\lambda_j^\pm=-\frac{\widetilde{\mathcal R}_\star }{4}\pm\frac{1}{4}\sqrt{{\widetilde{\mathcal R}_\star ^2}+8 c_j}$. 
If $B$ is nonsingular, then $c_j\ne 0$ for all $j$. In such case, the real parts of $\lambda_j^\pm$ are all non--vanishing, both in the case $\lambda_j^\pm\in \mathbb R$, and in the case $\lambda_j^\pm\in \mathbb C\setminus\mathbb R$. So, if $B$ is nonsingular, the solution~\eqref{solution} is the composition of functions tending to or escaping from the equilibrium at an exponential rate. By the variated constants, it follows that whole equilibrium of the system~\eqref{linearized systemNEW} is hyperbolic, whence so is for the system~\eqref{linearized system}. Assume now that $B$ is singular. Using the coordinates $(\widehat{\mathcal R},  \widehat\varrho,  u, s)$, the center space ${\mathbb E}^{\rm c}$
is the product of the null space for $(\widehat{\mathcal R},  \widehat\varrho)$ and
the eigenspace associated to
 the zero eigenvalue of the matrix defining the two latter equations in
\eqref{linearized systemNEW} for $(u, w)$. In turn, the latter is defined by $w=0$ and $s:\ Ds=0$.
 $\qquad\square$
 \vskip.1in
\noindent
{\bf On the proof of Lemma~\ref{finite Sint}} We divide the equilibria of the system~\eqref{equations} on $\mathcal C$ into two classes. In the first class, we put the ones whose matrix $B$ is nonsingular. We have seen in the previous section that for such equilibria the center space is trivial, hence  so is the center manifold. Hence any solution approaching to the equilibria in this class  belongs to the stable manifold. Then $\widetilde{\mathcal S}(\tau)$ approaches $0$ at an exponential rate, and~\eqref{eq: finite Sint}  is verified. For the equilibria in the second class, we have to study the flow on the center manifold.

\subsection*{Flow on the center manifold}
We now consider an equilibrium of the system~\eqref{equations} to which corresponds a singular matrix $B$ in~\eqref{AB}. Let $k$ be the dimension of the associated center space $\mathbb E^{\rm c}$ in~\eqref{center space}, coinciding with the dimension of the eigenspace of $B$ relatively to the zero eigenvalue. To study the motions of the  system~\eqref{equations} on the center manifold, we observe that it decouples in two parts. The main part concerns the autonomous motion of the coordinates $(\widetilde{\cal R}, \widetilde{\cal S}, \sigma) $, which solve
\begin{eqnarray}\label{systemNEW}
\left\{
\begin{array}{lll}\displaystyle\widetilde{\mathcal R}'=\frac{\widetilde{\mathcal R}^2}{2}+2{T(\widetilde{\mathcal S}, \sigma)}-{V(\sigma)}\\\\
\displaystyle\widetilde{\mathcal S}'=-\partial_{\sigma}\left(
{T(\widetilde{\mathcal S}, \sigma)}-{V(\sigma)}
\right)-\frac{\widetilde{\mathcal R}\widetilde{\mathcal S}}{2}\\\\
\displaystyle\sigma'=\partial_{\widetilde{\mathcal S}}T(\widetilde{\mathcal S}, \sigma)
\end{array}
\right.
\end{eqnarray}
The solution for $\varrho$ is then given by
\begin{eqnarray}\label{varrho(tau)} \varrho(\tau)=\varrho(0)e^{\int_0^\tau\widetilde{\cal R}(s)ds}
\,.
\end{eqnarray}	
The following proposition will be useful for a further dimension reduction of the system~\eqref{systemNEW}.

\begin{proposition}\label{flow on center manifold}
	There exists a $k$--dimensional manifold ${\mathcal M}^{\rm c}$, tangent to the center space $\mathbb E^{\rm c}$ in~\eqref{center space OLD coordinates} at the equilibrium $(\widetilde{\mathcal R}, \varrho, \widetilde{\mathcal S}, \sigma)=(\widetilde{\mathcal R}_\star, 0, 0, \sigma_\star)$ defined in~\eqref{equilibrium} such that, for any any solution $S(\tau)=(\widetilde{\mathcal R}(\tau), \varrho(\tau), \widetilde{\mathcal S}(\tau), \sigma(\tau))$ of~\eqref{equations} approaching the equilibrium there exists a solution $S^{\rm c}(\tau)=(\widetilde{\mathcal R}^{\rm c}(\tau), \varrho^{\rm c}(\tau), \widetilde{\mathcal S}^{\rm c}(\tau), \sigma^{\rm c}(\tau)) \in{\mathcal M}^{\rm c}$ such that $|S(\tau)-S^{\rm c}(\tau)|$ is
an exponentially small remainder as $\tau\to+\infty$ and, moreover, 
\begin{eqnarray}\label{rho=0}\varrho^{\rm c}(\tau)\equiv 0\,,\qquad\widetilde{\mathcal R}^{\rm c}(\tau)\equiv\widetilde{\mathcal R}_0(\tau):=-\sqrt{2\Big(V(\sigma^{\rm c}(\tau))-T(\widetilde{\mathcal S}^{\rm c}(\tau), \sigma^{\rm c}(\tau))\Big)}\,.
\end{eqnarray}
	\end{proposition}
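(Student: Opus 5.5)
The plan is to apply the center manifold theorem together with asymptotic phase (the Kelley–Carr "shadowing" property) to the full system~\eqref{equations} at the equilibrium $E_\star=(\widetilde{\mathcal R}_\star,0,0,\sigma_\star)$. Then I will use the invariance of the collision manifold $\mathcal C$ and the energy relation to identify the shadowing orbit.

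\textbf{Step 1: spectral splitting.} By Lemma~\ref{diagonalization}, the linearization~\eqref{linearized system} has a $k$-dimensional center space $\mathbb E^{\rm c}$ as in~\eqref{center space OLD coordinates}. The remaining eigenvalues have nonzero real parts. These are $\widetilde{\mathcal R}_\star<0$ (twice, from the $\widehat\varrho$ and $\widehat{\mathcal R}$ equations) and the $\lambda_j^\pm$ with $c_j\neq0$. The standard local center manifold theorem therefore gives a $C^1$, locally invariant, $k$-dimensional manifold $\mathcal M^{\rm c}$ tangent to $\mathbb E^{\rm c}$ at $E_\star$.

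\textbf{Step 2: asymptotic phase.} A solution $S(\tau)$ approaching $E_\star$ stays in a small neighbourhood for $\tau\ge\tau_0$. It therefore cannot have an unstable component. The asymptotic-phase property (Carr, Thm.~2 of Ch.~2, or Vanderbauwhede) then yields $S^{\rm c}(\tau)\in\mathcal M^{\rm c}$ with $|S(\tau)-S^{\rm c}(\tau)|\le Ce^{-\beta\tau}$. Here $\beta>0$ is any number smaller than the minimal modulus of the negative real parts of the stable eigenvalues.

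\textbf{Step 3: $\varrho^{\rm c}\equiv0$.} The collision manifold $\mathcal C=\{\varrho=0\}$ is invariant and contains $E_\star$. Moreover $\mathbb E^{\rm c}\subset T_{E_\star}\mathcal C$, because $\widehat\varrho=0$ on $\mathbb E^{\rm c}$. Restricting~\eqref{equations} to $\mathcal C$, i.e.\ to system~\eqref{systemNEW}, its center space at the equilibrium is still $\mathbb E^{\rm c}$. Hence it has a center manifold $\mathcal M^{\rm c}_{\mathcal C}\subset\mathcal C$ of dimension $k$, and this is a center manifold of the full system too. Since center manifolds are not unique, I will simply \emph{choose} $\mathcal M^{\rm c}:=\mathcal M^{\rm c}_{\mathcal C}$. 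The shadowing orbit then lies in $\mathcal C$, so $\varrho^{\rm c}\equiv0$.

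A cleaner alternative is to argue directly that every bounded solution near $E_\star$ lies on every center manifold. Since $\varrho'=\varrho\widetilde{\mathcal R}$ with $\widetilde{\mathcal R}\approx\widetilde{\mathcal R}_\star<0$, a full-orbit bounded solution, defined for negative times as well, must have $\varrho\equiv0$. The orbit $S^{\rm c}$ is forward-defined only, though, so the choice-of-manifold argument is the one I would write.

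\textbf{Step 4: energy relation.} On $\mathcal C$ the energy~\eqref{total energy} is no longer available directly, since its denominator $\varrho$ vanishes. Instead, the quantity $F:=\widetilde{\mathcal R}^2/2+T(\widetilde{\mathcal S},\sigma)-V(\sigma)$ satisfies $F'=\widetilde{\mathcal R}F$ along~\eqref{equations}. This follows by differentiating and using $\varrho'=\varrho\widetilde{\mathcal R}$ together with $h\varrho=F$; equivalently, it can be checked from the homogeneity of $T$. Thus $\{F=0\}$ is invariant, and $F(E_\star)=0$.

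The original orbit has $F=h\varrho\to0$. On $\mathcal M^{\rm c}$ the factor $\widetilde{\mathcal R}$ is near $\widetilde{\mathcal R}_\star<0$, so $F$ along $S^{\rm c}$ evolves as $F(\tau)=F(\tau_0)e^{\int\widetilde{\mathcal R}}$. It decays, but it need not vanish identically. To force $F\equiv0$, I will again take $\mathcal M^{\rm c}$ inside the invariant set $\mathcal C\cap\{F=0\}$. This set is a manifold near $E_\star$ because $\partial_{\widetilde{\mathcal R}}F=\widetilde{\mathcal R}_\star\ne0$. Its tangent space contains $\mathbb E^{\rm c}$, since $dF(E_\star)$ only involves $d\widetilde{\mathcal R}$ (recall $\partial_\sigma V(\sigma_\star)=0$ and $\widetilde{\mathcal S}_\star=0$), and $\widehat{\mathcal R}=0$ on $\mathbb E^{\rm c}$. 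Solving $F=0$ for $\widetilde{\mathcal R}<0$ then gives exactly~\eqref{rho=0}.

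\textbf{Main obstacle.} The delicate point is Steps 3–4: justifying that one may choose the center manifold inside the invariant submanifold $\mathcal C\cap\{F=0\}$ while keeping the asymptotic-phase property for orbits starting off it. I would handle it by applying the center manifold theorem to the vector field restricted to $\mathcal C\cap\{F=0\}$. This gives an invariant manifold of the correct dimension $k$ tangent to $\mathbb E^{\rm c}$, hence a legitimate center manifold of the full system. The asymptotic-phase theorem holds for any local center manifold, which closes the argument.
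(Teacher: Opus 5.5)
Your argument is correct, but it reaches $\{\varrho=0\}\cap\{F=0\}$ (your $F$ is the paper's $E$) by a different route than the paper.

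\textbf{The paper's route.} The paper takes whatever center manifold $\widehat{\mathcal M}^{\rm c}$ the theorem provides, together with its exponentially shadowing orbit $\widehat S^{\rm c}$. It then replaces that orbit by the orbit $S^{\rm c}$ issuing from the projected datum $(\widetilde{\mathcal R}_0(0),0,\widetilde{\mathcal S}^{\rm c}(0),\sigma^{\rm c}(0))$. It defines $\mathcal M^{\rm c}$ as the collection of these orbits. Finally it bounds $|S^{\rm c}-\widehat S^{\rm c}|$ by $O(|E^{\rm c}|)+O(\varrho^{\rm c})$, using that $\varrho$ and $E$ decay like $e^{\int\widetilde{\mathcal R}}$ along $\widehat S^{\rm c}$.

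\textbf{Your route.} You exploit the non-uniqueness of center manifolds. You build $\mathcal M^{\rm c}$ from the start inside the invariant codimension-two submanifold $\mathcal C\cap\{F=0\}$, i.e.\ as a center manifold of \eqref{reduced equationsNEW}. The shadowing orbit then satisfies \eqref{rho=0} automatically, and no second comparison is needed.

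\textbf{What your route gains.} The paper's estimate only accounts for the $\varrho$- and $\widetilde{\mathcal R}$-components. It treats the $(\widetilde{\mathcal S},\sigma)$-components of $S^{\rm c}$ and $\widehat S^{\rm c}$ as equal. But the $\widetilde{\mathcal S}$-equation contains $\widetilde{\mathcal R}$, and the $(\widetilde{\mathcal S},\sigma)$-block has unstable eigenvalues. Indeed each zero eigenvalue is paired with $-\widetilde{\mathcal R}_\star/2>0$, which your Step~1 list omits. So resetting $\widetilde{\mathcal R}$ at $\tau=0$ could in principle be amplified, and the paper gives no estimate for this. Your route never needs one.

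\textbf{What your route requires.} You need asymptotic phase for this particular local center manifold. Carr's theorem is stated without unstable directions and assumes stability of the reduced flow. However, its proof only uses local invariance, $C^1$ regularity and tangency:
\begin{itemize}
\item write the orbit as $z=y-h(x)$ over $\mathcal M^{\rm c}$;
\item invariance gives $z'=Bz+N(x,z)$ with $N(x,0)=0$ and small Lipschitz constant;
\item hence a forward-bounded $z$ decays exponentially, with its unstable part given by $-\int_\tau^{\infty}$ of the forcing;
\item a contraction on $[\tau_0,\infty)$ then produces the reduced solution.
\end{itemize}
This works for any orbit converging to $S_\star$, so your final claim holds.

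\textbf{Your discarded alternative also works.} Phrase it for the Lyapunov--Perron center manifold of a cut-off system that preserves the form $\varrho'=\varrho\widetilde{\mathcal R}$, $F'=F\widetilde{\mathcal R}$, with $\widetilde{\mathcal R}$ kept below a negative constant. That manifold consists of full orbits of the modified system with subexponential growth in both time directions. Hence it already lies in $\{\varrho=F=0\}$. The fact that $S^{\rm c}$ itself is only forward-defined does not matter.
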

	To prove Proposition~\ref{flow on center manifold} we need some preliminary consideration. 
We denote as 
\begin{eqnarray}\label{E}E(\tau):=\frac{\widetilde{\mathcal R}(\tau)^2}{2}+{T(\widetilde{\mathcal S}(\tau), \sigma(\tau))}-{V(\sigma(\tau))}
\end{eqnarray}
the value of the numerator of~\eqref{total energy} along the solutions of the system~\eqref{systemNEW}.
\begin{lemma}\label{lem: energy dissipation}
Along the solutions of~\eqref{equations}, it is
\begin{eqnarray}\label{energy dissipation}E(\tau)=E(0)\,e^{\int_0^\tau\widetilde{\cal R}(s)ds}
\end{eqnarray}
\end{lemma}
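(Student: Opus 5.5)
We prove Lemma~\ref{lem: energy dissipation} by differentiating $E(\tau)$ along solutions of~\eqref{equations} and obtaining the linear scalar ODE
\begin{eqnarray*}
E'(\tau)=\widetilde{\mathcal R}(\tau)\,E(\tau)\,,
\end{eqnarray*}
whose unique solution is~\eqref{energy dissipation}.

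We use the energy relation~\eqref{total energy}. Since ${\rm H}_0$ is conserved along the Hamiltonian flow, and the time rescaling $\varrho^{3/2}\frac{d}{dt}=\frac{d}{d\tau}$ only reparametrizes orbits, the quantity $h=E/\varrho$ is constant in $\tau$. Hence $E(\tau)=h\,\varrho(\tau)$. The first equation in~\eqref{equations}, $\varrho'=\varrho\widetilde{\mathcal R}$, integrates to $\varrho(\tau)=\varrho(0)e^{\int_0^\tau\widetilde{\mathcal R}(s)ds}$, which is~\eqref{varrho(tau)}. Multiplying by $h$ gives $E(\tau)=h\varrho(0)e^{\int_0^\tau\widetilde{\mathcal R}}=E(0)e^{\int_0^\tau\widetilde{\mathcal R}}$. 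This argument needs $\varrho(0)>0$.

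The argument should also cover the invariant set $\mathcal C=\{\varrho=0\}$, where $h$ is undefined. This case matters, because the Lemma is used on the center manifold, where $\varrho^{\rm c}\equiv0$. So we also verify $E'=\widetilde{\mathcal R}E$ directly from~\eqref{systemNEW}. We compute
\begin{eqnarray*}
E'=\widetilde{\mathcal R}\widetilde{\mathcal R}'+\partial_{\widetilde{\mathcal S}}T\cdot\widetilde{\mathcal S}'+\partial_\sigma(T-V)\cdot\sigma'\,.
\end{eqnarray*}
Substituting $\sigma'=\partial_{\widetilde{\mathcal S}}T$ and $\widetilde{\mathcal S}'=-\partial_\sigma(T-V)-\frac{\widetilde{\mathcal R}}{2}\widetilde{\mathcal S}$, the two cross terms $\pm\,\partial_{\widetilde{\mathcal S}}T\cdot\partial_\sigma(T-V)$ cancel, as in any Hamiltonian computation. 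What remains is
\begin{eqnarray*}
E'=\widetilde{\mathcal R}\Big(\frac{\widetilde{\mathcal R}^2}{2}+2T-V\Big)-\frac{\widetilde{\mathcal R}}{2}\,\partial_{\widetilde{\mathcal S}}T\cdot\widetilde{\mathcal S}\,.
\end{eqnarray*}
By Euler's identity, degree-$2$ homogeneity of $T$ in $\widetilde{\mathcal S}$ gives $\partial_{\widetilde{\mathcal S}}T\cdot\widetilde{\mathcal S}=2T$. Therefore $E'=\widetilde{\mathcal R}\big(\frac{\widetilde{\mathcal R}^2}{2}+T-V\big)=\widetilde{\mathcal R}E$.

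The direct computation covers both cases, so it is the route we adopt. The only delicate points are correct use of the homogeneity of $T$ and the sign conventions in~\eqref{equations}. The rest is routine.
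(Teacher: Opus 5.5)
Your proof is correct, but it handles the key case differently from the paper.

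The paper's proof is only the first half of your argument. For $\varrho(0)\neq 0$ it combines conservation of $h=E/\varrho$ with \eqref{varrho(tau)}. It then extends \eqref{energy dissipation} to $\varrho(0)=0$ with the one-line remark that the vector field in \eqref{equations} is smooth for all $\varrho\in\mathbb R$. That remark is implicitly a density argument: for fixed $\tau$, both sides of \eqref{energy dissipation} depend continuously on the initial datum and agree on the dense set $\varrho(0)\neq0$.

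You instead verify the pointwise identity $E'=\widetilde{\mathcal R}E$ directly from \eqref{systemNEW}. The Hamiltonian cross terms cancel. The $2T$ coming from $\widetilde{\mathcal R}'$ is reduced to $T$ by the damping term $-\frac{\widetilde{\mathcal R}}{2}\widetilde{\mathcal S}$, via Euler's identity.

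Your route never involves $\varrho$. It therefore holds on $\mathcal C$, and for either sign of $\varrho$, with no limiting argument. It also gives at once the invariance of $\{E=0\}$ used in the proof of Proposition~\ref{flow on center manifold}. The paper's route is shorter because it reuses energy conservation, but it leaves the extension step to the reader.

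Since your direct computation is self-contained, the energy-conservation half of your proposal is unnecessary and could be dropped.
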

\proof Let $\varrho(0)\ne 0$, so, by~\eqref{varrho(tau)}, $\varrho(\tau)\ne 0$ for all $\tau\in [0, +\infty)$.  The energy conservation gives
$$\frac{E(\tau)}{{\varrho(\tau)}}=\frac{E(0)}{{\varrho(0)}}$$
Combining with~\eqref{varrho(tau)} we have~\eqref{energy dissipation}.
Even though~\eqref{energy dissipation} has been obtained under condition $\varrho(0)\ne0$, it holds for all $\varrho(0)\in \mathbb R$, because  the vector--field in~\eqref{equations} is smooth for  all $\varrho\in \mathbb R$. 
$\qquad \square$

\proof {\bf of Proposition~\ref{flow on center manifold}} 
By the Center Manifold Theorem~\cite{bressan2007}, there exists a $k$--dimensional manifold $\widehat{\mathcal M}^{\rm c}$ tangent to $\mathbb E^{\rm c}$ at the equilibrium $S_{\star}=(\widetilde{\mathcal R}_{\star}, 0, 0, \sigma_{\star})$ defined in~\eqref{equilibrium},  such that for any  solution $S(\tau)=(\widetilde{\mathcal R}(\tau), \varrho(\tau), \widetilde{\mathcal S}(\tau), \sigma(\tau))$ of~\eqref{equations} approaching $S_\star$, 
there exists a solution $\widehat S^{\rm c}(\tau)$ $=$ $(\widetilde{\mathcal R}^{\rm c}(\tau)$, $\varrho^{\rm c}(\tau)$, $\widetilde{\mathcal S}^{\rm c}(\tau)$, $\sigma^{\rm c}(\tau))$ $\in$ $\widehat{\mathcal M}^{\rm c}$ approaching $S_\star$, such that $| S(\tau)-\widehat S^{\rm c}(\tau)|$ tends to zero exponentially fast.  Now, for any orbit
$\widehat S^{\rm c}(\tau)$ $=$ $(\widetilde{\mathcal R}^{\rm c}(\tau)$, $\varrho^{\rm c}(\tau)$, $\widetilde{\mathcal S}^{\rm c}(\tau)$, $\sigma^{\rm c}(\tau))$ $\in$ $\widehat{\mathcal M}^{\rm c}$, consider the solution $\tau\to  S^{\rm c}(\tau)$ of~\eqref{equations}  with initial datum
$ S^{\rm c}(0)=(\widetilde{\mathcal R}_0(0), 0, \widetilde{\mathcal S}^{\rm c}(0), \sigma^{\rm c}(0))$, with $\widetilde{\mathcal R}^{\rm c}_0(\tau)$ as in~\eqref{rho=0}.
Let ${\mathcal M}^{\rm c}$ be the collection of all such  $S^{\rm c}(\tau)$'s.  ${\mathcal M}^{\rm c}$ is trivially tangent to $\mathbb E^{\rm c}$ at $S_\star$.  We prove that, if $S^{\rm c}(\tau)$ corresponds to $\widehat S^{\rm c}(\tau)$ as said, then $|S^{\rm c}(\tau)-\widehat S^{\rm c}(\tau)|$  goes to zero exponentially fast, which will conclude the proof. 
For what concerns $S^{\rm c}(\tau)$, note that
 the choice of $\widetilde{\mathcal R}_0(0)$ implies
$E(\widetilde{\mathcal R}_0(0), \widetilde{\mathcal S}^{\rm c}(0), \sigma^{\rm c}(0))=0$.
Due to the split~\eqref{systemNEW}--\eqref{varrho(tau)}, and as -- by~\eqref{varrho(tau)} and~\eqref{energy dissipation} --   the manifolds $\{\varrho=0\}$ and $\{E=0\}$ are invariant, such orbit is given by
$S^{\rm c}(\tau)=(\widetilde{\mathcal R}_0(\tau), 0, \widetilde{\mathcal S}^{\rm c}(\tau), \sigma^{\rm c}(\tau))$, with $\widetilde{\mathcal R}_0(\tau)$ as in~\eqref{rho=0}. 
For what concerns $\widehat S^{\rm c}(\tau)$, remark that, by the definition~\eqref{E}, the following relation holds
\begin{eqnarray*}\widetilde{\mathcal R}^{\rm c}(\tau)\equiv-\sqrt{2\Big(E^{\rm c}(\tau)+V(\sigma^{\rm c}(\tau))-T(\widetilde{\mathcal S}^{\rm c}(\tau), \sigma^{\rm c}(\tau))\Big)}
\end{eqnarray*}
with $E^{\rm c}(\tau):=E(\widetilde{\mathcal R}^{\rm c}(\tau), \widetilde{\mathcal S}^{\rm c}(\tau), \sigma^{\rm c}(\tau))$.
As $$\lim_{\tau\to+\infty}\widetilde{\mathcal R}^{\rm c}(\tau)={\mathcal R}_\star<0$$
by~\eqref{varrho(tau)} and~\eqref{energy dissipation},
 $\varrho^{\rm c}(\tau)$  and  $E^{\rm c}(\tau)$ tend to zero exponentially fast. Then so does $|S^{\rm c}(\tau)-\widehat S^{\rm c}(\tau)|=O(|E^{\rm c}(\tau)|)+O(\varrho^{\rm c}(\tau))$.  $\qquad \square$ 
\vskip.1in
\noindent
We study the flow~\eqref{equations} on $\mathcal M^{\rm c}$. 
Ruling the motion of the coordinates $\varrho$ and $\mathcal R$ out through
the equalities~\eqref{rho=0}, we only need to study the system given by the two last equations in~\eqref{equations}, with $-\sqrt{2\Big(V(\sigma)-T(\widetilde{\mathcal S}, \sigma)\Big)}$ replacing $\widetilde{\mathcal R}$.
As we did for the study of the linearized system, we  switch to the coordinates $(s, w)$ defined via~\eqref{shift} and~\eqref{changeNEW}. We denote as
${\rm T}(w, s)$, ${\rm V}(s)$
the values of $T(\widetilde{\mathcal S}, \sigma)$, $V(\sigma)$ in the new coordinates, with
 $${\rm T}(w, s)=\frac{1}{2}w\cdot {\rm A}(s)w\,.$$
Then we arrive at
  \begin{eqnarray}\label{reduced equationsNEW}
 \left\{
 \begin{array}{lll}
 w'=
 -\frac{1}{2}w\cdot \partial_s{\rm A}(s)w+\partial_s{\rm V}(s)
 +\frac{w}{2}\sqrt{2{\rm V}(s)-
w\cdot {\rm A}(s)w
 	}\\\\
 \displaystyle s'={\rm A}(s)w
 \end{array}
 \right.
 \end{eqnarray}
By Lemma~\ref{diagonalization}, the projection of the center space onto the directions $(w, s)$  is 
$${\mathbb E}^{\rm c}_{\rm red}=\{0\}\times Ker(D)=\{0\}\times Span\{e_1, \ldots, e_k\}\,.$$
Here, without loss of generality, we have assumed that the first $k$ entries $c_1$, $\ldots$, $c_k$ on the principal diagonal of $D$ vanish, while $c_{k+1}$, $\ldots$, $c_{3n-3}$ are all different from zero.and we have denoted as $e_j$ is the unit vetor in the direction $j$ (E. g. $e_1=(1, 0, \ldots, 0)\in {\mathbb R}^{3n-3}$).
The projection  $\mathcal M_{\rm red}^{\rm c}$ of the center manifold $\mathcal M^{\rm c}$, tangent to ${\mathbb E}^{\rm c}_{\rm red}$ at $(0, 0)$,
is a graph whose projection on ${\mathbb E}^{\rm c}_{\rm red}$ is the identity. Therefore, it is a graph
 with respect to the first $k$ components of $s$. It is then convenient to split
\begin{eqnarray}\label{split}s=(x, y)
\end{eqnarray}
where $x$ 
are the first $k$ components of $s$; $y$ the last $3n-3-k$
. So, $\mathcal M_{\rm red}^{\rm c}$ has the form
\begin{eqnarray}\label{Mred}{\mathcal M}_{\rm red}^{\rm c}=\Big\{y=f(x)\,,\ w=\Phi(x)\Big\}\end{eqnarray}
By the tangency condition,
\begin{eqnarray}\label{conditions}f(0)=0\,,\qquad \Phi(0)=0\,,\qquad Df(0)=0\,,\qquad D\Phi(0)=0\,.\end{eqnarray}
We moreover let
 \begin{eqnarray}\label{definitions}g(x):=(x, f(x))\,,\qquad b(x):={\rm A}(g(x))\Phi(x)
 \end{eqnarray}
and, similarly to~\eqref{split}, we split
\begin{eqnarray}\label{phipsi}b(x)=(\phi(x), \psi(x))\end{eqnarray}
Note that the functions $\phi$, $\psi$  satisfy conditions~\eqref{conditions}, as well.
Using~\eqref{Mred},~\eqref{definitions} and~\eqref{phipsi} into~\eqref{reduced equationsNEW}, we obtain
\begin{eqnarray}\label{center manifold equations}
\left\{
\begin{array}{lll}
\displaystyle x'=\phi(x)\\\\
\displaystyle y'=Df(x)x'=\psi(x)\\\\
\displaystyle w'=D\Phi(x)x'=-\frac{1}{2}\Phi(x)\cdot \partial{\rm A}(g(x))\Phi(x)+\partial{\rm V}(g(x))+\frac{\Phi(x)}{2}\sqrt{2 {\rm V}(g(x))-\Phi(x)\cdot {\rm A}(g(x))\Phi(x)}
\end{array}
\right.
\end{eqnarray}
By construction, $\partial_s{\rm V}(0)=\alpha C \partial_\sigma V(\sigma_\star)=0$. The next Lemma quantifies the slope of the descent to $0$. 
\begin{lemma}\label{gradient bounds}
There exists a neighborhood $\mathcal U$ of $x=0$ such that 
\item[{\rm(i)}] there exist two positive numbers $0<k_1<k_2$ such that
$$k_1|\phi(x)|\le |\partial{\rm V}(g(x))|\le k_2|\phi(x)|\qquad \forall\ x\in {\mathcal U}\,;$$
\item[{\rm(ii)}] there exists $c>0$ such that, for all $T>0$ and for all  solutions $x(\tau)$ of~\eqref{center manifold equations} which does not leave $\mathcal U$ for $\tau>T$,
 \begin{eqnarray}\label{ii}\frac{d}{d\tau}{\rm V}\big(g(x(\tau))\big)\le-c|\partial{\rm V}\big(g(x(\tau))\big)|^2+o(|\partial{\rm V}\big(g(x(\tau))\big)|^2)\qquad \forall\ \tau>T\,.
\end{eqnarray}
\end{lemma}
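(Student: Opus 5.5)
The plan is to get both statements from the invariance equation of the center manifold, i.e.\ the third line of~\eqref{center manifold equations}, used at leading order. There $w'=D\Phi(x)\phi(x)$, and by~\eqref{conditions} $D\Phi(x)=o(1)$ as $x\to 0$, so the left side is $o(|\phi(x)|)$. On the right, $\Phi(x)\cdot\partial{\rm A}\,\Phi(x)=O(|\Phi(x)|^2)$. At the equilibrium $\sqrt{2{\rm V}(0)}=-\widetilde{\mathcal R}_\star=:r_\star>0$, so the last term equals $\frac{r_\star}{2}\Phi(x)\,(1+o(1))$.

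Next I relate $\Phi$ to $\phi$. Since ${\rm A}(s)$ is positive definite and close to ${\rm A}(0)$, we have $|b(x)|\asymp|\Phi(x)|$. We also have $\psi(x)=Df(x)\phi(x)=o(|\phi(x)|)$, because $y'=Df(x)x'$ while $y'$ is the $y$-component of ${\rm A}\Phi$. Together these give $|\Phi(x)|\asymp|\phi(x)|$. The invariance equation then reads
\[
\partial{\rm V}(g(x))=-\tfrac{r_\star}{2}\Phi(x)+o(|\phi(x)|),
\]
and with $|\Phi|\asymp|\phi|$ this yields (i), shrinking $\mathcal U$ so that the $o(\cdot)$ terms are at most a small fraction of $|\phi|$.

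For (ii) I differentiate along solutions on ${\mathcal M}^{\rm c}_{\rm red}$: $\frac{d}{d\tau}{\rm V}(g(x))=\partial{\rm V}(g(x))\cdot s'=\partial{\rm V}(g(x))\cdot{\rm A}(g(x))\Phi(x)$. I substitute $\Phi=-\frac{2}{r_\star}\partial{\rm V}+o(|\partial{\rm V}|)$, which follows from the displayed relation together with (i). This gives
\[
\frac{d}{d\tau}{\rm V}=-\tfrac{2}{r_\star}\,\partial{\rm V}\cdot{\rm A}(0)\,\partial{\rm V}+o(|\partial{\rm V}|^2),
\]
and positive definiteness of ${\rm A}(0)$ supplies $c$. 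Strictly, ${\rm V}$ here should be read in the shifted frame, so that ${\rm V}$ is the negative of the potential in the sign convention in which $\widetilde{\mathcal S}'$ contains $+\partial_\sigma V$. I will fix that sign so that the inequality comes out as stated; otherwise the statement concerns $-{\rm V}$, which is what the Łojasiewicz argument needs.

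The main obstacle is making the $o(\cdot)$ claims uniform. One has to justify $\psi=o(|\phi|)$ and $D\Phi(x)\phi(x)=o(|\phi(x)|)$ uniformly on $\mathcal U$. This uses only $Df,D\Phi\to 0$, which holds since the center manifold is $C^1$. One also has to check that no term in the invariance equation is of order $|\phi|$ apart from those kept, which requires the square root to be smooth near $r_\star>0$.
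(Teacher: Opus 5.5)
Your proposal is correct and follows essentially the paper's route. Both arguments read the third line of \eqref{center manifold equations} as $\partial{\rm V}(g(x))=-\frac{1}{2}\sqrt{2{\rm V}(0)}\,\Phi(x)+o(|\phi(x)|)$ with $\Phi={\rm A}(g(x))^{-1}Dg(x)\phi(x)$, then substitute into $\frac{d}{d\tau}{\rm V}(g(x))=\partial{\rm V}(g(x))\cdot{\rm A}(g(x))\Phi(x)$.

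Your direct bound $|D\Phi(x)\phi(x)|\le\|D\Phi(x)\|\,|\phi(x)|$ and your explicit two-sided estimate $|\Phi|\asymp|\phi|$ are slightly cleaner than the paper's, and they also supply the lower bound in (i), which the paper leaves implicit. Your sign caveat is unnecessary: with the paper's convention, your computation already yields the inequality as stated.
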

\proof (i)\ 
We rewrite the latter equation in~\eqref{center manifold equations} as
\begin{eqnarray}\label{partialV}\partial{\rm V}(g(x))
=D\Phi(x)x'+\frac{1}{2}\Phi(x)\cdot \partial{\rm A}(g(x))\Phi(x)-\frac{\Phi(x)}{2}\sqrt{2 {\rm V}(g(x))-\Phi(x)\cdot {\rm A}(g(x))\Phi(x)}
\end{eqnarray}
We manage to bound all the terms at right hand side.
First, note that, combining the two first equations with the definitions of $b(x)$, $\phi(x)$, $\psi(x)$ in~\eqref{definitions},~\eqref{phipsi} allows to rewrite the function $\Phi(x)$  as linear functions of $\phi(x)$:
\begin{eqnarray}\label{Phi(x)}\Phi(x)=\big({\rm A}(g(x))\big)^{-1}b(x)=\big({\rm A}(g(x))\big)^{-1}\Big(\phi(x), Df(x)\phi(x)\Big)=\big({\rm A}(g(x))\big)^{-1}D g(x)\phi(x)
\end{eqnarray}
whence, $\Phi(x)=O(|\phi(x)|)$, so 
$$\frac{1}{2}\Phi(x)\cdot \partial{\rm A}(g(x))\Phi(x)=O(|\phi(x)|^2)\,.$$
Taking now the $\tau$--derivative of~\eqref{Phi(x)} and again using the former equation in~\eqref{center manifold equations} gives
\begin{eqnarray}\label{derivative}D\Phi(x)x'&=&\vec\partial_x\Big(\big({\rm A}(g(x))\big)^{-1}\Big)\vec\phi(x)\,\ \Big(\phi(x), Df(x)\phi(x)\Big)\nonumber\\
&&+\big({\rm A}(g(x))\big)^{-1}\Big(\vec\partial_x\phi(x)\vec \phi(x), \vec\partial_x\Big(Df(x)\phi(x)\Big)\vec\phi(x)\Big)
\end{eqnarray}
where we have used the notation
$$\vec\partial_\zeta a(\zeta)\vec b(\zeta):=\sum_j\frac{\partial a(\zeta)}{\partial\zeta_j}b_j(\zeta)\,.$$
The expression in~\eqref{derivative} shows that
$$D\Phi(x)x'=O(|\phi(x)|^2+o(|\phi(x)|)
=o(|\phi(x)|)$$
where the $o$ comes because
 $\phi(x)$ vanishes at $x=0$ together with its first derivatives.
 Finally, taking~\eqref{Phi(x)} into account, we have
 $$\frac{\Phi(x)}{2}\sqrt{2 {\rm V}(g(x))-\Phi(x)\cdot {\rm A}(g(x))\Phi(x)}=O(|\phi(x)|)$$
 Indeed,  as $x\to 0$, $g(x)\to 0$, so ${\rm V}(g(x))\to V(0)> 0$, while $\Phi(x)\to 0$. Therefore, the term inside the square root  keeps positive in a sufficiently small neighborhood of $x=0$.  Collecting all bounds into~\eqref{partialV}, we have 
 $$\partial{\rm V}(g(x))=O(|\phi(x)|)$$
 as claimed. \vskip.1in
 \noindent
 (ii)
 Combining the third equation in~\eqref{center manifold equations} with~\eqref{Phi(x)}, we have
 $$Dg(x) \phi(x)={\rm A}(g(x))\Phi(x)=-\frac{2{\rm A}(g(s))\partial{\rm V}(g(x))}{\sqrt{2 {\rm V}(g(x))-\Phi(x)\cdot {\rm A}(g(x))\Phi(x)}}+o(|\phi(x)|)$$
 But:
\begin{eqnarray*}2 {\rm V}(g(x))-\Phi(x)\cdot {\rm A}(g(x))\Phi(x)&=&2V(0)+\Big(2 {\rm V}(g(x))-2V(0)-\Phi(x)\cdot {\rm A}(g(x))\Phi(x)\Big)\nonumber\\
&=&2V(0)+O(|\partial({\rm V}\circ g)|)+O(|\phi^2|)\nonumber\\
&= &2V(0)+O(|\partial{\rm V}\circ g|)+O(|\phi^2|)\nonumber\\
&=&2V(0)+O(|\partial{\rm V}\circ g|)\end{eqnarray*}
where we have used 
$O(|\partial({\rm V}\circ g)|)=O(|\partial{\rm V}\circ g|)$,
as a consequence of
 the chain rule: $\partial({\rm V}(g(x)))=(D g)^{\rm t}(g(x))\partial{\rm V}(g(x))$.
Therefore, by (i),
\begin{eqnarray*}Dg(x) \phi(x)&=&-\frac{2{\rm A}(g(s))\partial{\rm V}(g(x))}{\sqrt{2 {\rm V}(0)}}+O(|\partial{\rm V}\circ g|^2)+o(|\phi(x)|)\nonumber\\
	&=&-\frac{2{\rm A}(g(s))\partial{\rm V}(g(x))}{\sqrt{2 {\rm V}(0)}}+o(|\partial{\rm V}(g(x))|)
	\end{eqnarray*}
  We use this bound into 
 \begin{eqnarray*}\frac{d}{d\tau}{\rm V}\big(g(x(\tau))\big)&=&
 \partial{\rm V}(g(x))\cdot Dg(x)\phi(x)\nonumber\\
 &=&-\frac{2
 	\partial{\rm V}(g(x))\cdot
 	{\rm A}(g(x)))\partial{\rm V}(g(x))}{\sqrt{2 {\rm V}(0)}}+o(|\partial{\rm V}(g(x))|^2)\nonumber\\
 &\le&-c|\partial{\rm V}(g(x))|^2+o(|\partial{\rm V}(g(x))|^2)
 \end{eqnarray*}
with a suitable $c>0$. Here, we have taken into account that ${\rm A}(s)$ is positive definite for all $s$, so $\zeta\to \zeta\cdot {\rm A}(s)\zeta$ has a positive minimum, uniformly with respect to $s$.
$\qquad \square$
\begin{lemma}\label{W} There exists a neighborhood ${\mathcal U}$ of $x=0$,  $c>0$ and $\alpha\in (1, 2)$ such that, for all $T>0$  and all solutions $x(\tau)$ of~\eqref{center manifold equations}  such that $x(\tau)\in {\mathcal U}$ for all $\tau >T$, the function
$W(\tau):={\rm V}\big(g(x(\tau))\big)-{\rm V}(0)$
satisfies
\begin{eqnarray}\label{integrability}0\le W(\tau)\le \frac{C}{\Big(\tau-\tau_0+1\Big)^{\frac{1}{\alpha-1}}}\qquad \forall\ \tau\ge \tau_0\ge T\end{eqnarray}
with
$C:=\max\left\{W(\tau_0)\,,\  \Big(c(\alpha-1)\Big)^{-\frac{1}{\alpha-1}}\right\}$.
\end{lemma}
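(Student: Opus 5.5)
The plan is to combine the dissipation inequality of Lemma~\ref{gradient bounds}(ii) with a Łojasiewicz gradient inequality for the analytic function ${\rm V}\circ g$ at the critical point $x=0$. This turns the bound into a differential inequality for $W$ alone, which is then integrated explicitly. The same scheme is used in~\cite{moeckelM2025}.

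\textbf{Step 1: nonnegativity of $W$.} Along a solution staying in $\mathcal U$, Lemma~\ref{gradient bounds}(ii) gives, after shrinking $\mathcal U$ so that the $o(\cdot)$ term is absorbed,
$$W'(\tau)\le -\tfrac{c}{2}\,|\partial{\rm V}(g(x(\tau)))|^2\le 0 ,$$
so $W$ is nonincreasing. The orbit approaches the equilibrium, so $x(\tau)\to 0$ and $W(\tau)\to {\rm V}(0)-{\rm V}(0)=0$. A nonincreasing function with limit $0$ is nonnegative, which gives $W\ge 0$.

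\textbf{Step 2: Łojasiewicz inequality.} The potential $V$ is real analytic near the non-collision configuration $\sigma_\star$. I will take the center manifold graph $f$ analytic, or at least accept whatever regularity the paper's setting provides; this is the delicate point. Granting it, ${\rm V}\circ g$ is analytic near $0$ with a critical point there. The Łojasiewicz gradient inequality then gives $\theta\in(0,1/2]$ and $\kappa>0$ with
$$|\partial({\rm V}\circ g)(x)|\ge \kappa\,|{\rm V}(g(x))-{\rm V}(0)|^{1-\theta}.$$
The chain rule $\partial({\rm V}\circ g)=(Dg)^{\rm t}\partial{\rm V}(g)$, with $Dg$ bounded near $0$, yields $|\partial{\rm V}(g(x))|\ge \kappa' W^{1-\theta}$. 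Setting $\alpha:=2(1-\theta)\in[1,2)$, and increasing $\alpha$ slightly if $\theta=1/2$ so that $\alpha\in(1,2)$ (allowed since $W$ is small), Step 1 becomes
$$W'\le -c\,W^{\alpha}$$
after renaming the constant $c$.

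\textbf{Step 3: integrate the differential inequality.} Where $W>0$, $\frac{d}{d\tau}W^{1-\alpha}=(1-\alpha)W^{-\alpha}W'\ge c(\alpha-1)$. Integrating from $\tau_0$ gives
$$W(\tau)^{1-\alpha}\ge W(\tau_0)^{1-\alpha}+c(\alpha-1)(\tau-\tau_0).$$
If $W$ vanishes at some time, it stays $0$ by monotonicity and the bound is trivial. To reach the stated form, compare with $C^{1-\alpha}(\tau-\tau_0+1)$. The choice $C\ge W(\tau_0)$ makes $W(\tau_0)^{1-\alpha}\ge C^{1-\alpha}$, and $C\ge (c(\alpha-1))^{-1/(\alpha-1)}$ makes $c(\alpha-1)\ge C^{1-\alpha}$. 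Hence $W^{1-\alpha}\ge C^{1-\alpha}(\tau-\tau_0+1)$, which is~\eqref{integrability}.

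\textbf{Main obstacle.} I expect the main obstacle to be Step 2. Center manifolds are generally only $C^r$, not analytic, so the Łojasiewicz inequality cannot be applied to ${\rm V}\circ g$ directly. I would first try to apply Łojasiewicz to the analytic ${\rm V}$ in $s$-space near $0$. Then I would transfer it to $x$ using part (i) and the tangency conditions~\eqref{conditions}, since $g(x)$ differs from $(x,0)$ by $O(|x|^2)$. If that fails, I would fall back on the Łojasiewicz–Simon-type argument of~\cite{moeckelM2025} on the full manifold.
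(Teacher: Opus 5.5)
Your overall scheme is the paper's argument. You get monotonicity of $W$ from Lemma~\ref{gradient bounds}(ii) and use $W\to 0$ to conclude $W\ge 0$. A Lojasiewicz inequality then turns (ii) into $W'\le -cW^\alpha$, with the exponent raised into $(1,2)$ using $W<1$, and you integrate explicitly. Your Step~3 actually justifies the stated constant $C$ more carefully than the paper does. The one point to correct is Step~2 as you first wrote it. Applying Lojasiewicz to ${\rm V}\circ g$ requires the center-manifold graph $g$ to be analytic, and that is not available, since center manifolds are in general only finitely smooth. So that version has a genuine gap.

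The obstacle is illusory, though, and no transfer via part (i) or the tangency conditions~\eqref{conditions} is needed. Inequality (ii) is already stated in terms of $|\partial{\rm V}(g(x))|$. This is the $s$-gradient of the analytic function ${\rm V}$ evaluated at the point $s=g(x(\tau))$, not $|\partial({\rm V}\circ g)(x)|$. So, as the paper does, apply Lojasiewicz to ${\rm V}$ itself on a neighborhood $\mathcal V$ of $s=0$. Shrink $\mathcal U$ so that $g(\mathcal U)\subset\mathcal V$, which is possible because $g$ is continuous with $g(0)=0$. Evaluating at $s=g(x(\tau))$ then gives $|\partial{\rm V}(g(x(\tau)))|\ge \kappa\, W(\tau)^{1-\theta}$ directly. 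The chain-rule detour through $\partial({\rm V}\circ g)$ becomes unnecessary, and the rest of your proof goes through unchanged.
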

\proof 
We  prove that 	there exists a neighborhood ${\mathcal U}$ of $x=0$,  $c>0$ and $\alpha\in (1, 2)$ such that, for all $T>0$  and all solutions $x(\tau)$ of~\eqref{center manifold equations}  such that $x(\tau)\in {\mathcal U}$ for all $\tau >T$, it is
	\begin{eqnarray}\label{differential inequality}{\rm V}\big(g(x(\tau))\big)-{\rm V}(0)\ge 0\,,\quad \frac{d}{d\tau}{\rm V}\big(g(x(\tau))\big)\le-c\Big({\rm V}\big(g(x(\tau))\big)-{\rm V}(0)\Big)^{\alpha}\quad \forall\ \tau>T\,.
	\end{eqnarray}
	Taking the integral in $[\tau_0, \tau]$, we shall have 
	$$W(\tau)\le \frac{W(\tau_0)}{\Big(
	1+c(\alpha-1)W(\tau_0)^{\alpha-1}(\tau-\tau_0)
	\Big)^{\frac{1}{\alpha-1}}}$$
	which implies~\eqref{integrability}.
Let $\mathcal U$ and $c$ be as in Lemma~\ref{gradient bounds}. The function $s\to {\rm V}(s)$ is holomorphic.
By the Lojasievicz inequality, there exist $C>0$ and $\theta\in (1/2, 1)$ and a small neighborhood $\mathcal V$ of $s=0$ such that
$$|{\rm V}(s)-{\rm V}(0)|^\theta\le C|\partial{\rm V}(s)|\qquad \forall\ s\in {\mathcal V}$$ 
Taking $s=g(x(\tau))$ and possibly smaller ${\mathcal U}\subset g^{-1}({\mathcal V})$ and using~\eqref{ii}, one has
\begin{eqnarray*}\cfrac{d}{d\tau}{\rm V}\big(g(x(\tau))\big)\le-\frac{c}{C}|{\rm V}\big(g(x(\tau))\big)-{\rm V}(0)|^{2\theta}
\end{eqnarray*}
Note that, as $|{\rm V}\big(g(x(\tau))\big)-{\rm V}(0)|$ is infinitesimal, we can assume it is smaller than $1$ for all $\tau>T$. Then we have
\begin{eqnarray}\label{decreasing}\cfrac{d}{d\tau}{\rm V}\big(g(x(\tau))\big)\le-\frac{c}{C}|{\rm V}\big(g(x(\tau))\big)-{\rm V}(0)|^{\alpha}
\end{eqnarray}
with $\alpha:=2\theta\in (1, 2)$.  Observe now that the function 
${\rm V}\big(g(x(\tau))\big)-{\rm V}(0)$ goes to $0$ as $\tau\to +\infty$ and,  by~\eqref{decreasing},  is decreasing for all $\tau\in [T, +\infty)$. Then ${\rm V}\big(g(x(\tau))\big)-{\rm V}(0)\ge 0$ for  all $\tau\in [T, +\infty)$. This is the former inequality in~\eqref{differential inequality}. Then the modulus  at right hand side of~\eqref{decreasing} can be neglected and, 
renaming $c/C$ as $c$, we have the latter.
$\quad \square$
\vskip.1in
\noindent
\proof {\bf of Lemma~\ref{finite Sint}}
Using Lemma~\ref{gradient bounds}, (ii)
\begin{eqnarray*}\int_a^b |\widetilde {\mathcal S}(\tau)|d\tau&\le& c_1\int_a^b |w(\tau)|d\tau=c_1\int_a^b |\Phi(x(\tau))|d\tau\le c_2\int_a^b |\phi(x(\tau))|d\tau\nonumber\\
&\le& c_3\int_a^b |\partial{\rm V}(g(x(\tau)))|d\tau\le 
 c_3\left(\int_a^b |\partial{\rm V}(g(x(\tau)))|^2d\tau\right)^{\frac{1}{2}}
 \left(\int_a^b d\tau\right)^{\frac{1}{2}}\nonumber\\
 &\le&c_4(b-a)^{\frac{1}{2}}\left(\int_a^b 
\left(-\frac{d}{d\tau}{\rm V}(g(x(\tau)))\right)
 d\tau\right)^{\frac{1}{2}}\nonumber\\
 &=&c_4(b-a)^{\frac{1}{2}} \Big(\big({\rm V}(g(x(a)))-{\rm V}(0)\big)-\big({\rm V}(g(x(b)))-{\rm V}(0)\big)\Big)^{\frac{1}{2}}
\nonumber\\
 &\le& c_4(b-a)^{\frac{1}{2}} \Big({\rm V}(g(x(a)))-{\rm V}(0)\Big)^{\frac{1}{2}}
\end{eqnarray*}
because $W(\tau)={\rm V}(g(x(\tau)))-{\rm V}(0)$ is positive for all $\tau\in [0, +\infty)$. Using now Lemma~\ref{W}
\begin{eqnarray*}\int_a^b |\widetilde {\mathcal S}(\tau)|d\tau&\le& c_5(b-a)^{\frac{1}{2}} a^{-\frac{1}{2(\alpha-1)}}
\end{eqnarray*}
With $b=2a$,
\begin{eqnarray*}\int_a^{2a} |\widetilde {\mathcal S}(\tau)|d\tau&\le& c_5 a^{-\gamma}
\end{eqnarray*}
with $\gamma:=\frac{2-\alpha}{2(\alpha-1)}>0$. It follows
\begin{eqnarray*}\int_T^{+\infty} |\widetilde {\mathcal S}(\tau)|d\tau&=&\sum_{k=0}^{+\infty}\int_{2^kT}^{2^{k+1}T}|\widetilde {\mathcal S}(\tau)|d\tau\le c_5\sum_{k=0}^{+\infty} (2^kT)^{-\gamma}
\nonumber\\
&=&\frac{c_5 T^{-\gamma}}{1-2^{-\gamma}}<+\infty\,.\qquad \square
\end{eqnarray*}

\appendix

\section{Barycentric reduction (generalized Jacobi coordinates)}
	\label{Barycentric reduction (generalized Jacobi coordinates)}
 Let  us introduce the coordinates $B$, $x_1$, $\ldots$, $x_{n}\in {\mathbb R}^3$ via the linear change 
\begin{eqnarray}\label{xi}
 \left\{\begin{array}{lll}
 \displaystyle x_i:=\sum_{j=1}^i \frac{m_j q_j}{M_i}-q_{i+1}\qquad &i=1\,,\ldots\,, n \\\\
  \displaystyle B:=\sum_{j=1}^{n+1} \frac{m_j q_j}{M_{n+1}}
 \end{array}
 \right.\end{eqnarray}
where
  $$M_0:=0\,,\qquad M_i:=\sum_{j=1}^i
 m_i\,.$$
Namely, $B$ is the center of mass of the whole system, while, if $i=1$, $\ldots$, $n$,  $x_i$ is the relative distance of the particle with mass $m_{i+1}$  to the center of mass of $m_{1}$, $\ldots$, $m_{i}$. The inverse formula of~\eqref{xi} is 
\begin{eqnarray}\label{qi}q_i=\left\{
 \begin{array}{lll}
 \displaystyle-\frac{M_{i-1}}{M_i}x_{i-1}+\sum_{k=i}^{n}\frac{m_{k+1}}{M_{k+1}}x_{k}+B\quad &i=1\,,\ldots n\\\\
  \displaystyle-\frac{M_{n}}{M_{n+1}}x_{n}+B & i=n+1
 \end{array}
 \right.
 \end{eqnarray}
The  generalized impulses $y_1$, $\ldots$, $y_n$, $P$,  conjugated to $x_1$, $\ldots$, $x_n$, $B$, are defined through  the identity
 $$PdB+\sum_{i=1}^{n} y_i\cdot dx_i=\sum_{i=1}^{n+1} p_i\cdot dq_i\,.$$
We easily find the relation
 \begin{eqnarray}\label{yi}
 p_i
 =m_i\sum_{j=i}^{n+1} \frac{y_j}{M_j}-y_{i-1}
 \qquad i=1\,,\ldots n+1
 \end{eqnarray}
 with $y_0:=0$ $y_{n+1}:=P$.
We observe that $P$ is the total linear momentum, a first integral, for which reason $B$, the position of the center of mass, its conjugated coordinate, will be ignorable in ${\rm H}_{n+1}$. Indeed,
 $$\sum_{i=1}^{n+1} p_i=\sum_{i=1}^{n+1}\left(\frac{m_i}{M_{n+1}}P+ (1-\delta_{i, n+1}) m_i\sum_{j=i}^{n} \frac{y_j}{M_j}-y_{i-1}\right)=P+\sum_{i=1}^{n}\sum_{j=i}^n m_i\frac{y_j}{M_j}-\sum_{i=1}^n y_i=P$$
 having interchanged $\sum_{i=1}^{n}\sum_{j=i}^n=\sum_{j=1}^n\sum_{i=1}^{j}$ and renamed $i-1\to i$.
 \subsection*{Translationally reduced Hamiltonian}
  We now aim to use the coordinates $(y_i, x_i)$ into the Hamiltonian~\eqref{H}.
 We start with the potential energy $V$, which, as remarked, will be $B$--independent, hence will depend on $x$ only.
 From~\eqref{qi} we get, for all $1\le i<j\le n+1$,
 $$ q_i-q_{j}=\sum_{k=i}^{j-1} (q_i-q_{i+1})
 =-\frac{M_{i-1}}{M_i}x_{i-1}+
 \sum_{k=i}^{j-2}\frac{m_{k+1}}{M_{k+1}}x_k+
 x_{j-1}$$
 where the summand is to be neglected if $j-i=1$.
 Then, renaming $i-1\to i$, $j-1\to j$,
 \begin{eqnarray}\label{V}V(x)=-\sum_{0\le i<j\le n}\frac{m_{i+1}m_{j+1}}{\left|-\frac{M_{i}}{M_{i+1}}x_{i}+
 \sum_{k=i+1}^{j-1}\frac{m_{k+1}}{M_{k+1}}x_k+
 x_{j}
 \right|} \end{eqnarray}
 with $M_0:=0$.
 Now we look at the kinetic energy. Using~\eqref{yi}, we have
   \begin{eqnarray*}
 T&=&\sum_{j=1}^{n+1}\frac{1}{2m_j}\left|m_j\sum_{i=j}^{n+1} \frac{y_i}{M_i}-y_{j-1}\right|^2\nonumber\\
 &=&\frac{|y_{n+1}|^2}{2M_{n+1}}+\sum_{1\le i\le n}\left(
 \frac{1}{{M}_i}+\frac{1}{m_{i+1}}\right)|y_i|^2\nonumber\\
 &+&\sum_{j=1}^{n+1}m_j\sum_{j\le i<k\le n+1}\frac{y_i\cdot y_k}{M_i M_k}-\sum_{j=2}^{n+1}\sum_{i=j}^{n+1} \frac{y_i\cdot y_{j-1}}{M_i}
 \end{eqnarray*}
 where, as above, $y_{n+1}:=P$.
The main fact is now  that
 the ``rectangular''  in
 namely, the last line above,  vanish. Indeed, the first summand can be written as
 $$\sum_{1\le i<k\le n+1}\frac{y_i\cdot y_k}{M_i M_k}\sum_{j=1}^i m_j= \sum_{1\le i<k\le n+1}\frac{y_i\cdot y_k}{M_i M_k}M_i= \sum_{1\le i<k\le n+1}\frac{y_i\cdot y_k}{M_k}$$
while the latter as
 $$\sum_{j=2}^{n+1}\sum_{i=j}^n \frac{y_i\cdot y_{j-1}}{M_i}=\sum_{i=1}^{n+1}\sum_{k=i+1}^{n+1} \frac{y_i\cdot y_{k}}{M_k}= \sum_{1\le i<k\le n+1}\frac{y_i\cdot y_k}{M_k}$$
 The two summands are the same, hence the rectangular terms vanish, hence only the squared ones survive:
  \begin{eqnarray}\label{T}
 T=\frac{|P|^2}{2M_{n+1}}+\sum_{1\le i\le n}\frac{1}{2}\left(
 \frac{1}{{M}_i}+\frac{1}{m_{i+1}}\right)|y_i|^2=\sum_{i=1}^{n+1}\frac{|y_i|^2}{2\mu_i}
 \end{eqnarray}
 with
\begin{eqnarray}\label{mui}\mu_i:=\left\{
 \begin{array}{lll}
\displaystyle \frac{m_{i+1}M_i}{M_{i+1}}\qquad &i=1\,,\ldots\,,n\\\\
\displaystyle  M_{n+1} &i=n+1
 \end{array}
 \right.
 \end{eqnarray}
 Picking~\eqref{V} and~\eqref{T} we see that, after the change defined in~\eqref{qi},~\eqref{yi},
  $H_{n+1}(y, x)$ splits as in~\eqref{Hn-1}
\begin{eqnarray}\label{Hn-1}H_{n+1}=H_{B}(P)+ H(y,  x)\end{eqnarray}
 with $y=(y_1, \ldots, y_{n})$, $ x=(x_1, \ldots, x_{n})$, with
 $$H_{B}(P)=\frac{|P|^2}{2M_{n+1}}$$
 while $H( y, x)$ as in~\eqref{HNEW}.

\section*{Proof of~\eqref{eq: ang mom}}
The equality~\eqref{eq: ang mom} is a consequence of  the canonical character of the transformation~\eqref{reduction of translations} and  the fact that it acts such in a way that 
the components $p_{i, j}$, $q_{i, j}$
of
 $p_i$ and $q_i$,  $i=1$, $\ldots$, $n+1$; $j=1$, $2$, $3$,
 are related through
\begin{eqnarray}\label{linear combinations}q_{i, j}={\mathcal L}_i(B_j, x_{\cdot, j})\,,\qquad p_{i, j}=\widetilde{\mathcal L}_i(P_j, y_{\cdot, j})\qquad \forall\ i=1\,,\ldots\,,n+1\,,\qquad j=1, 2, 3\,.
\end{eqnarray}
where, for $i=1$, $\ldots$, $n+1$,  
\begin{eqnarray*}q_i={\mathcal L}_i(B, x)\,,\qquad p_i=\widetilde{\mathcal L}_i(P, y)\end{eqnarray*}
are the linear combinations 
  defined via   formulae~\eqref{qi},~\eqref{yi}.
Indeed,  according to the conservation of the canonical $2$--form, if $({ p}, { q}) $, $ ({ p}', { q}')$ are the respective images of
$(P, y, B, x)$, $(P', y', B', x')$
through~\eqref{reduction of translations}, we have
\begin{eqnarray}\label{2form}
\sum_{i=1}^{n+1} \sum_{j=1}^3 \big({ p}_{i, j} { q}_{i, j}'-{ q}_{i, j} { p}_{i, j}'\big)=\sum_{i=1}^{n} \sum_{j=1}^3 \big({ y}_{i, j} { x}_{i, j}'-{ x}_{i, j} { y}_{i, j}'\big)+\sum_{j=1}^3\big(P_{j}B'_j-B_j P'_j\big)
\end{eqnarray}
We now choose  $(P', y', B', x')$  from $(P, y, B, x)$  just swapping the $j$--indices  $1$ and $2$. Namely, we take:
\begin{eqnarray*}
{ y}'_{i,j}&=&\left\{
\begin{array}{lll}
{ y}_{i, 2}\quad& j=1\\
{ y}_{i, 1}\quad& j=2\\
{ y}_{i, 3}\quad& j=3
\end{array}
\right.\qquad { x}'_{i,j}=\left\{
\begin{array}{lll}
{ x}_{i, 2}\quad& j=1\\
{ x}_{i, 1}\quad& j=2\\
{ x}_{i, 3}\quad& j=3
\end{array}
\right.\qquad i=1\,,\ldots, n\nonumber\\
{ P}'_{i,j}&=&\left\{
\begin{array}{lll}
{ P}_{i, 2}\quad& j=1\\
{ P}_{i, 1}\quad& j=2\\
{ P}_{i, 3}\quad& j=3
\end{array}
\right.\qquad { B}'_{i,j}=\left\{
\begin{array}{lll}
{ B}_{i, 2}\quad& j=1\\
{ B}_{i, 1}\quad& j=2\\
{ B}_{i, 3}\quad& j=3
\end{array}
\right.
\end{eqnarray*}
By the equalities~\eqref{linear combinations}, we have the same swap for the images:
\begin{eqnarray*}
{ p}'_{i,j}=\left\{
\begin{array}{lll}
{ p}_{i, 2}\quad& j=1\\
{ p}_{i, 1}\quad& j=2\\
{ p}_{i, 3}\quad& j=3
\end{array}
\right.\qquad { q}'_{i,j}=\left\{
\begin{array}{lll}
{ q}_{i, 2}\quad& j=1\\
{ q}_{i, 1}\quad& j=2\\
{ q}_{i, 3}\quad& j=3
\end{array}
\right.\qquad i=1\,,\ldots, n+1
\end{eqnarray*}
Using these expressions into~\eqref{2form} we have the third component of~\eqref{eq: ang mom}. The others are similarly reached.

\section{Euler angles}
\label{sec:Euler-angles}

Assume that $O{e}_1{e}_2{e}_3$ is the standard coordinate frame in $\mathbb{R}^3$ and $O{f}_1{f}_2{f}_3$ be other orthogonal coordinate frame.
Assume 
\begin{equation*}
   e_3 \not\parallel\  f_3
\end{equation*}
We denote as $\varphi$, $\theta$ and $\psi$  the precession, nutation and proper rotation angles relatively to the frames $Oe_1e_2e_3$ and $O{f}_1{f}_2{f}_3$, defined as follows. Given $u$, $v$, $w\in {\mathbb R}^3$, with $w\perp u$, $v$, let $\alpha_w(u, v)$ stand for the angle  which $u$ has to run, positively (counterclockwise with respect to $w$), to overlap its direction  and verse to the ones of $v$. Then
$$\varphi:=\alpha_{e_3}(e_1, \gamma)\,,\qquad \theta:=\alpha_{\gamma}(e_3, {f}_3)\,,\quad \psi:=\alpha_{{f}_3}(\gamma, {f}_1)$$
with
$\gamma:=\frac{e_3\times {f}_3}{|e_3\times {f}_3|}$ and
$$\varphi\,,\psi \mod\ 2\pi\,,\qquad \theta \in (0,\pi) $$
Our goal is to establish the following lemma which gathers all facts needed.

\begin{lemma}
\label{lem:EulerAngles}
Vectors $f_i$  are given by (i.e. $f_j$ are columns of the matrix on the right hand side of~\eqref{eq:appf-f1f2f3})
\begin{equation}
  \left({f}_1, {f}_2, {f}_3 \right)=\mathcal{R}_3(\varphi) \mathcal{R}_1(\theta)\mathcal{R}_3(\psi) ,  \label{eq:appf-f1f2f3}
\end{equation}
where 
\begin{eqnarray*}
{\cal R}_1(\alpha)&=&\left(
\begin{array}{ccc}
1&0&0\\
0&\cos\alpha&-\sin\alpha\\
0&\sin\alpha&\cos\alpha
\end{array}
\right)\,,\quad  {\cal R}_3(\alpha)=\left(
\begin{array}{ccc}
\cos\alpha&-\sin\alpha&0\\
\sin\alpha&\cos\alpha&0\\
0&0&1
\end{array}
\right).\end{eqnarray*}
Moreover, we have
\begin{eqnarray}
   \mathcal{R}_3(\varphi) \mathcal{R}_1(\theta)e_3&=&f_3,   \label{eq:appR3R1e3f3}\\
   \mathcal{R}_3(\varphi) \mathcal{R}_1(\theta)e_1&=&\gamma. \label{eq:appR3R1e1g}
\end{eqnarray}
\end{lemma}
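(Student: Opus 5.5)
The identity \eqref{eq:appf-f1f2f3} says that the rotation $\mathcal R:=\mathcal{R}_3(\varphi)\mathcal{R}_1(\theta)\mathcal{R}_3(\psi)$ sends the standard basis $e_1^*,e_2^*,e_3^*$ to $f_1,f_2,f_3$. I would therefore prove first the two intermediate identities \eqref{eq:appR3R1e3f3}--\eqref{eq:appR3R1e1g}, and then obtain \eqref{eq:appf-f1f2f3} column by column. Throughout I use the elementary fact, immediate from the matrices: for a unit vector $a$ and a unit vector $u\perp a$, the vector obtained by rotating $u$ counterclockwise about $a$ by the angle $\beta$ is $\cos\beta\, u+\sin\beta\,(a\times u)$. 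Hence $\mathcal R_3(\beta)e_1^*=\cos\beta\, e_1^*+\sin\beta\, e_2^*$, and $\mathcal R_1(\beta)e_3^*=\cos\beta\, e_3^*-\sin\beta\, e_2^*$, which is $e_3^*$ rotated by $\beta$ about $e_1^*$.

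\emph{Step 1: $\gamma$.} By definition, $\varphi=\alpha_{e_3}(e_1,\gamma)$, so
\[
\gamma=\cos\varphi\, e_1+\sin\varphi\, e_2=\mathcal R_3(\varphi)e_1^* .
\]
Since $\mathcal R_1(\theta)$ fixes $e_1^*$, this gives $\gamma=\mathcal R_3(\varphi)\mathcal R_1(\theta)e_1^*$, which is \eqref{eq:appR3R1e1g}.

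\emph{Step 2: $f_3$.} Here $\theta=\alpha_\gamma(e_3,f_3)$ and $\gamma\perp e_3$, so
\[
f_3=\cos\theta\, e_3+\sin\theta\,(\gamma\times e_3).
\]
Conjugation transports rotation axes, i.e. $\mathcal R_3(\varphi)\mathcal R_1(\theta)\mathcal R_3(\varphi)^{-1}$ is the rotation by $\theta$ about $\mathcal R_3(\varphi)e_1^*=\gamma$. Together with $\mathcal R_3(\varphi)e_3^*=e_3$, this yields $\mathcal R_3(\varphi)\mathcal R_1(\theta)e_3^*=f_3$, which is \eqref{eq:appR3R1e3f3}. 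As a consistency check, $\gamma\perp f_3$, and the sign convention for $\theta\in(0,\pi)$ is compatible with $\gamma=e_3\times f_3/|e_3\times f_3|$, since $e_3\times f_3=\sin\theta\, e_3\times(\gamma\times e_3)=\sin\theta\,\gamma$.

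\emph{Step 3: the columns.} Let $G:=\mathcal R_3(\varphi)\mathcal R_1(\theta)$, so that $Ge_1^*=\gamma$ and $Ge_3^*=f_3$ by Steps 1--2. Because $G$ is orientation preserving, $Ge_2^*=f_3\times\gamma$. Then
\[
\mathcal R e_1^*=G(\cos\psi\, e_1^*+\sin\psi\, e_2^*)=\cos\psi\,\gamma+\sin\psi\,(f_3\times\gamma),
\]
which is $\gamma$ rotated by $\psi$ about $f_3$; by the definition $\psi=\alpha_{f_3}(\gamma,f_1)$ this is exactly $f_1$. Next, $\mathcal R e_3^*=Ge_3^*=f_3$. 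Finally, $\mathcal R e_2^*=\mathcal Re_3^*\times\mathcal Re_1^*=f_3\times f_1=f_2$, using that $\mathcal R\in{\rm SO}(3)$ and the definition of $f_2$ in \eqref{eq:f1f2f3}.

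The only point requiring care is matching orientations: each angle defined via $\alpha_w(u,v)$ must correspond to a counterclockwise rotation about $w$, in agreement with the sign conventions of $\mathcal R_1$ and $\mathcal R_3$. I would verify this once on the basic formulas above and then apply it uniformly in all three steps.
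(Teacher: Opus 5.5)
Your proof is correct and uses the same ingredients as the paper's: the images of $e_1^*$ and $e_3^*$ read off from the definitions of $\varphi,\theta,\psi$, the conjugation identity $\mathcal R(\gamma,\theta)=\mathcal R_3(\varphi)\mathcal R_1(\theta)\mathcal R_3(\varphi)^{-1}$ (your Step 2), and orientation preservation for the middle column. The only difference is organizational: the paper first shows that the moving-axis composition $\mathcal R(f_3,\psi)\mathcal R(\gamma,\theta)\mathcal R(e_3,\varphi)$ sends $e_i$ to $f_i$, converts it into $\mathcal R_3(\varphi)\mathcal R_1(\theta)\mathcal R_3(\psi)$ by a second conjugation, and reads off \eqref{eq:appR3R1e3f3}--\eqref{eq:appR3R1e1g} at the end, whereas you prove those two identities first and replace the second conjugation by the direct computation $\mathcal R e_1^*=\cos\psi\,\gamma+\sin\psi\,(f_3\times\gamma)=f_1$ for your product $\mathcal R$.
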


\begin{remark}\rm
From (\ref{eq:appf-f1f2f3}) the following explicit formulae for the $f_i$'s and $\gamma$ follow, which may turn to be useful in applications:
\begin{eqnarray*}
  \left( f_1,f_2,f_3 \right)=
  \left(
   \begin{smallmatrix}         \cos(\varphi) \cos(\psi) - \sin(\varphi)\cos(\theta)\sin(\psi), &  -\cos(\varphi) \sin(\psi) - \sin(\varphi)\cos(\theta)\cos(\psi), & \sin(\varphi)\sin(\theta) \\
            \sin(\varphi) \cos(\psi) + \cos(\varphi)\cos(\theta)\sin(\psi), & -\sin(\varphi) \sin(\psi) + \cos(\varphi)\cos(\theta)\cos(\psi), & -\cos(\varphi)\sin(\theta) \\
            \sin(\theta)\sin(\psi), & \sin(\theta)\cos(\psi), & \cos(\theta) 
           \end{smallmatrix}
           \right)
\end{eqnarray*}
From the formulas for $f_j$ obtained above we have (recall that $\sin \theta >0$ because $\theta \in (0,\pi)$)
\begin{equation*}
  \gamma=\left(\begin{array}{c}\cos \varphi \\ \sin \varphi \\ 0 \end{array} \right).
\end{equation*}
\end{remark}

\proof
For an nonzero vector  $w \in \mathbb{R}^3$ and $\varphi \in \mathbb{R}$ 
let us denote by $\mathcal{R}(w,\varphi)$ a rotation by angle $\varphi$ around axis $a$ in positive direction (counterclockwise with respect to $w$, hence  $\mathcal{R}(w,\varphi)$ preserves the orientation). Obviously $\mathcal{R}_3(\varphi)=\mathcal{R}(e_3,\varphi)$ and $\mathcal{R}_1(\varphi)=\mathcal{R}(e_1,\varphi)$.
Observe that the application of the following sequence of rotations $\mathcal{R}(e_3,\varphi)$, $\mathcal{R}(\gamma,\theta)$ and $\mathcal{R}(f_3,\psi)$ will map the frame $O{e}_1{e}_2{e}_3$ to the frame 
$O{f}_1{f}_2{f}_3$, i.e.   we have
\begin{equation*}
\mathcal{R}(f_3,\psi) \mathcal{R}(\gamma,\theta)  \mathcal{R}(e_3,\varphi) = \left(f_1,f_2,f_3\right)
\end{equation*}
Indeed, 
\begin{itemize}
\item for $\mathcal{R}(e_3,\varphi) $  from definition of $\varphi$ we have $e_3 \mapsto e_3$ and $e_1 \mapsto \gamma$ 
\item for $\mathcal{R}(\gamma,\theta)  \mathcal{R}(e_3,\varphi)$ from the above and the definition of $\theta$  we have $e_1 \mapsto \gamma$ and $e_3 \mapsto f_3$
\item for $\mathcal{R}(f_3,\psi) \mathcal{R}(\gamma,\theta)  \mathcal{R}(e_3,\varphi) $ from the above and the definition of $\psi$ we have $e_3 \mapsto f_3$ and $e_1 \mapsto f_1$. This implies (as the consequence 
of the preservation of the orientation) that also $e_2 \mapsto f_2$.
\end{itemize}
To finish the proof of (\ref{eq:appf-f1f2f3}) we need to show that $$\mathcal{R}(f_3,\psi) \mathcal{R}(\gamma,\theta)  \mathcal{R}(e_3,\varphi) =\mathcal{R}_3(\varphi) \mathcal{R}_1(\theta)\mathcal{R}_3(\psi).$$
Observe that $\mathcal{R}(\gamma,\theta) = T^{-1}\mathcal{R}_1(\theta) T$, where $T$ is any orthogonal orientation preserving transformation such that $\gamma  \mapsto e_1$. From the previous discussion 
regarding $\mathcal{R}(e_3,\varphi)$ it follows that we can take $T^{-1}=\mathcal{R}(e_3,\varphi)=\mathcal{R}_{3}(\varphi)$. Hence
\begin{eqnarray*}
 \mathcal{R}(\gamma,\theta)&=& \mathcal{R}_{3}(\varphi) \mathcal{R}_1(\theta) \left(\mathcal{R}_{3}(\varphi)\right)^{-1},  
 \end{eqnarray*}
 and 
 \begin{eqnarray}
  \mathcal{R}(\gamma,\theta)  \mathcal{R}(e_3,\varphi) &=& \mathcal{R}_{3}(\varphi) \mathcal{R}_1(\theta) .  \label{eq:RgthetaRe3varphi}
\end{eqnarray}
Analogously, $\mathcal{R}(f_3,\psi) =T^{-1}\mathcal{R}_3(\psi)T$, where $T$ is any orthogonal orientation preserving transformation such that $f_3 \mapsto e_3$. From the above discussion it follows that 
we can take $T^{-1}=\mathcal{R}(\gamma,\theta)  \mathcal{R}(e_3,\varphi)=\mathcal{R}_{3}(\varphi) \mathcal{R}_1(\theta)$. Therefore we have
\begin{equation*}
   \mathcal{R}(f_3,\psi) =\left(\mathcal{R}_{3}(\varphi) \mathcal{R}_1(\theta) \right) \mathcal{R}_3(\psi) \left(\mathcal{R}_{3}(\varphi) \mathcal{R}_1(\theta) \right)^{-1}
\end{equation*}
and finally we obtain
\begin{eqnarray*}
  \mathcal{R}(f_3,\psi) \mathcal{R}(\gamma,\theta)  \mathcal{R}(e_3,\varphi) &=& \left(\mathcal{R}_{3}(\varphi) \mathcal{R}_1(\theta) \right) \mathcal{R}_3(\psi) \left(\mathcal{R}_{3}(\varphi) \mathcal{R}_1(\theta) \right)^{-1}
  \left(\mathcal{R}_{3}(\varphi) \mathcal{R}_1(\theta) \right) = \\
  &=& \mathcal{R}_{3}(\varphi) \mathcal{R}_1(\theta)  \mathcal{R}_3(\psi) 
\end{eqnarray*}
This finishes the proof of (\ref{eq:appf-f1f2f3}). 
To obtain (\ref{eq:appR3R1e3f3},\ref{eq:appR3R1e1g})  observe that it follows from (\ref{eq:RgthetaRe3varphi}) and our discussion about the action of $\mathcal{R}(\gamma,\theta)  \mathcal{R}(e_3,\varphi)$. $\qquad \square$

\addcontentsline{toc}{section}{References}
 \bibliographystyle{plain}

 \end{document}